%% file: main.tex
\documentclass[11pt, a4paper]{amsart}

\title{A Model Companion for Abelian Lattice-Ordered Groups with a Valuation}
\author{John Stokes-Waters \\ email :- \href{mailto: john.stokes-waters@postgrad.manchester.ac.uk}{john.stokes-waters@postgrad.manchester.ac.uk}}
\date{}

\usepackage{biblatex}
\usepackage[subtle]{savetrees}
\usepackage[left=2cm, right=2cm]{geometry}

\usepackage{imakeidx}
\makeindex

\newcommand{\dwedge}{%
  \mathop{
    \mathchoice{\wedge\mkern-15mu\wedge}
               {\wedge\mkern-12.5mu\wedge}
               {\wedge\mkern-12.5mu\wedge}
               {\wedge\mkern-11mu\wedge}
    }
}

\newcommand{\bdwedge}{%
  \mathop{
    \mathchoice{\bigwedge\mkern-15mu\bigwedge}
               {\bigwedge\mkern-12.5mu\bigwedge}
               {\bigwedge\mkern-12.5mu\bigwedge}
               {\bigwedge\mkern-11mu\bigwedge}
    }
}

\renewcommand{\land}{\hspace{1mm}\dwedge\hspace{1mm}}

\newcommand{\bigland}{\bdwedge}

\usepackage{hyperref}

\usepackage[only,llbracket,rrbracket]{stmaryrd}

\usepackage{amsthm}
\usepackage{amsmath}
\usepackage{amssymb}
\usepackage{mathtools} % :=
\usepackage{faktor} % Quotient notation
\usepackage{marvosym} % Lightning Bolt, for contradiction

\usepackage{bbm}

\usepackage{yfonts}

\usepackage{tikz-cd}

\newtheoremstyle{break}
{}{}%
{\itshape}{}%
{\bfseries}{}%
{\newline}{}

\theoremstyle{break}

\newtheorem{theorem}{Theorem}[section]
\newtheorem{define}[theorem]{Definition}
\newtheorem{lemma}[theorem]{Lemma}
\newtheorem{example}[theorem]{Example}

\newtheorem{remark}[theorem]{Remark}
\newtheorem{corollary}[theorem]{Corollary}

\newtheorem*{theorem*}{Theorem}
\newtheorem*{define*}{Definition}
\newtheorem*{lemma*}{Lemma}
\newtheorem*{example*}{Example}
\newtheorem*{axiom*}{Axiom}
\newtheorem*{remark*}{Remark}
\newtheorem*{corollary*}{Corollary}
\newtheorem*{conjecture*}{Conjecture}

\usepackage{subfiles}

\newcommand{\bb}[1]{\mathord{\mathbb{#1}}}

\newcommand{\bs}[1]{\mathord{\boldsymbol{#1}}}
\newcommand{\cal}[1]{\mathord{\mathcal{#1}}}
\renewcommand{\frak}[1]{\mathord{\mathfrak{#1}}}
\newcommand{\norm}[1]{\mathord{\textnormal{#1}}}

\newcommand{\comment}[1]{}

\renewcommand{\leq}{\leqslant}
\renewcommand{\geq}{\geqslant}
\newcommand{\leqa}{\sqsubseteq}

\usepackage{graphicx}
\makeatletter
\providecommand{\bigsqcap}{%
  \mathop{%
    \mathpalette\@updown\bigsqcup
  }%
}
\newcommand*{\@updown}[2]{%
  \rotatebox[origin=c]{180}{$\m@th#1#2$}%
}
\makeatother

\newcommand{\meet}{\mathop{\text{\raisebox{0.25ex}{\scalebox{0.8}{$\wedge$}}}}}
\newcommand{\meeta}{\mathop{\text{\raisebox{0.25ex}{\scalebox{0.8}{$\sqcap$}}}}}
\newcommand{\bigmeet}{\bigwedge}

\newcommand{\join}{\mathop{\text{\raisebox{0.25ex}{\scalebox{0.8}{$\vee$}}}}}
\newcommand{\joina}{\mathop{\text{\raisebox{0.25ex}{\scalebox{0.8}{$\sqcup$}}}}}
\newcommand{\bigjoin}{\bigvee}

\newcommand{\Cls}[2]{\mathop{\textnormal{Cl}_{#2}(#1)}}
\newcommand{\Cl}[1]{\Cls{#1}{}}

\newcommand{\spectrum}[1]{\textnormal{PrimF}(#1)}

\newcommand{\qcclat}[1]{{\bar{\mathcal{K}}(#1)}} % Closed constructible sets

\newcommand{\patch}[1]{#1_{\textnormal{con}}}

\newcommand{\id}[1]{\mathbbm{1}_{#1}}
\newcommand{\op}[1]{\mathbin{#1^\textnormal{op}}}
\newcommand{\adjoint}{\mathbin{\dashv}}
\DeclareMathOperator{\image}{Im}

\DeclareMathOperator{\Spec}{\underline{\textnormal{Spec}}} % Category of Spectral Spaces

\DeclareMathOperator{\LGrp}{\underline{\ell\textnormal{-Grp}}} % Category of l-Groups
\DeclareMathOperator{\ValGrp}{\underline{\textnormal{Val-}\ell\textnormal{-Grp}}} % Category of valued l-Groups
\DeclareMathOperator{\DValGrp}{\underline{\textnormal{Val-}\ell\textnormal{-Grp}}^{\textnormal{d}}} % Category of valued l-Groups

\newcommand{\abs}[1]{\left| #1 \right|}
\newcommand{\pol}[1]{#1^{\perp}}
\newcommand{\ppol}[1]{#1^{\perp\perp}}
\DeclareMathOperator{\divis}{\textnormal{Divis}}

\newcommand{\quot}[2]{\faktor{#1}{#2}}

\newcommand{\subgrp}{\mathbin{\leq}}
\newcommand{\ideal}{\mathbin{\lhd}}

\newcommand{\minideal}[1]{\mathop{\textnormal{Ideal}\left(#1\right)}}

\newcommand{\lspec}[1]{\ell\textnormal{-Spec}(#1)}
\newcommand{\plspec}[1]{\ell\textnormal{-Spec}^*(#1)}

\newcommand{\valfunc}{\textnormal{Val}}
\newcommand{\val}[1]{\valfunc(\cal{#1})}

\newcommand{\stan}[1]{\textnormal{Stan}\left(#1\right)}

\newcommand{\entails}{\vdash}

\DeclareMathOperator{\lbdlat}{\cal{L}_{\textnormal{BDLat}}}
\DeclareMathOperator{\lbalg}{\cal{L}_{\textnormal{BoolAlg}}}

\DeclareMathOperator{\llgrp}{\cal{L}_{\ell\textnormal{-Grp}}}
\DeclareMathOperator{\lvlgrp}{\frak{L}}
\DeclareMathOperator{\lvlgrpskolem}{\frak{L}^+}

\DeclareMathOperator{\tlgrp}{T_{\ell\textnormal{-Grp}}}
\DeclareMathOperator{\tvlgrp}{\frak{T}}
\DeclareMathOperator{\tdvlgrp}{\frak{T}^{\textnormal{d}}}
\DeclareMathOperator{\tvlgrpac}{\frak{T}^{\textnormal{d}}_{\textnormal{ac}}}
\DeclareMathOperator{\tvlgrpec}{\frak{T}^{\textnormal{d}}_{\textnormal{ec}}}
\DeclareMathOperator{\tvlgrpecskolem}{\frak{T}^{\textnormal{d,+}}_{\textnormal{ec}}}

\newcommand{\typespace}[2]{\textnormal{Stone}_{#2}(#1)}

\begin{document}
    
    \begin{abstract}
        An abelian lattice-ordered group, or abelian \(\ell\)-group, is an abelian group equipped with a compatible lattice ordering. In this paper, we introduce two multi-sorted extensions of abelian lattice-ordered groups inspired by the zero-set maps for continuous functions into \(\bb{R}\). We demonstrate that this expansion is equivalent to equipping \(\cal{G}\) with a spectral subspace \(X\) of \(\lspec{\cal{G}}\), along with the map sending \(a \in \cal{G}\) to \(V(a \meet 0) \cap X\). Using a classical partial quantifier elimination result originally due to Fuxing Shen and Volker Weispfenning \cite{ShenWeispfenning}, we show that one of these expansions admits a model companion, which is complete and has quantifier elimination in a small language expansion.
    \end{abstract}

    \keywords{lattice-ordered group, valuation,  algebraically closed, existentially closed}
    \subjclass[2020]{03C64 (Primary), 06F20, 03G10 (Secondary)}

    \maketitle

    \section{Discussion}

        Abelian \(\ell\)-groups arise frequently throughout mathematics, notably as reducts of rings of real-valued continuous functions on Tychonoff spaces, as well as the value groups of valued fields. In addition, they play an important role in number theory, as \(\bb{Z}^n\) and \(\bb{Q}^n\) are both abelian \(\ell\)-groups for any choice of \(n\). Further, they also play a central role in the study of \L ukasiewicz logic, as the category of abelian \(\ell\)-groups with strong order unit is equivalent to the category of MV algebras. 

        \vspace{1.5mm}

        Our main interests lie in the model-theoretic properties of abelian \(\ell\)-groups. The class of existentially closed abelian \(\ell\)-groups is well-understood (see \cite{SaracinoWood},\cite{AlgClosedAndExtClosedLGrps},\cite{WeispfenningECLGrps}, \cite{ConstructionOfExtClosedLGrpsViaFraisseLimits}), however it does not form an elementary class \cite[Page 72]{WeispfenningECLGrps}. Our goal then is to find a suitable theory of abelian \(\ell\)-groups which admits a model companion. 

        \vspace{1.5mm}
        
        In this paper, we propose a multi-sorted extension to abelian \(\ell\)-groups, which we call \textit{densely valued} \(\ell\)\textit{-groups}. These are inspired by the ring of real-valued continuous functions \(C(\bb{R})\), and the map \(P\) which assigns to each \(f \in C(\bb{R})\) the set \(\{x \in \bb{R} \ | \ f(x) \geq 0\}\). As such, a densely valued \(\ell\)-group is a triple \((\cal{G},\cal{L},P)\) where \(\cal{G}\) is an abelian \(\ell\)-group, \(\cal{L}\) is a bounded distributive lattice, and \(P\) is a lattice morphism from \(\cal{G}\) to \(\cal{L}\) satisfying some additional axioms (See Definition \ref{ValuationDef} for details). Further, we show that there is a pair of adjoint functors between the category of abelian \(\ell\)-groups and the category of densely valued \(\ell\)-groups (Theorem \ref{NatValAdjoint}). This adjunction equips each abelian \(\ell\)-group \(\cal{G}\) with the lattice of closed constructible sets of its spectrum. 

        \vspace{1.5mm}

        The main importance of densely valued \(\ell\)-groups is that they are the natural model-theoretic environment for the application of the Shen-Weispfenning theorem - a classical result due to \cite{ShenWeispfenning} and stated in Theorem \ref{ShenWeispfenning}, which tells us that if \((\cal{G},\cal{L},P)\) is densely valued with \(\cal{G}\) divisible and satisfying \textit{patching} (see Definition \ref{PatchingDef}), then we can eliminate quantifiers in the group sort. 

        \vspace{1.5mm}

        Using the Shen-Weispfenning theorem, we first characterise the algebraically closed densely valued \(\ell\)-groups (Theorem \ref{ACValGrpAltDef}). In particular, they are those triples \((\cal{G},\cal{L},P)\) with patching, \(\cal{G}\) divisible, and \(\cal{L}\) Boolean. Then, we use this to characterise existentially closed densely valued \(\ell\)-groups (Theorem \ref{ECValGrpAltDef}) as those which are algebraically closed and with \(\cal{L}\) atomless. 

        \vspace{1.5mm}

        We investigate the theory \(\tvlgrpec\) of densely valued \(\ell\)-groups, and show it is complete (Theorem \ref{ECDVAlGrpIsComp}), and has quantifier elimination in the language \(((+,-,\leq,0,\meet,\join),(\leqa,\joina,\meeta,\top,\bot,\lnot),P)\) (Theorem \ref{QEForECValGrpsWBoolComp}).

    \section{Preliminaries}

        In this section, we review some essential notions from the theory of lattice-ordered groups. For more information, see \cite{LGroupTextbook1}, \cite{LGroupTextbook2}. 

        \vspace{3mm}

        An \textbf{abelian} \(\bs{\ell}\)\textbf{-group} is an abelian group \(\cal{G}\) with a partial order \(\leq\) such that \((\cal{G},\leq)\) is a lattice, and such that, for \(a,b,c \in \cal{G}\) with \(a \leq b\), we have that \(a + c \leq b + c\). From now on, we use ``\(\ell\)-group" to mean ``abelian \(\ell\)-group". If \(\leq\) is a total order, then we say that \(\cal{G}\) is an \(\bs{o}\)\textbf{-group}. We note that the class of \(\ell\)-groups is finitely axiomatisable in the language \(\llgrp = (+,-,0,\leq,\meet,\join)\), and we denote the corresponding theory by \(\tlgrp\). 

        \vspace{1.5mm}
        
        For an element \(a \in \cal{G}\), we write \(\abs{a} = a \join (-a)\), which is the \textbf{absolute value} of \(a\). We note that \(0 \leq \abs{a}\) for all \(a \in \cal{G}\). Further, we write \(\cal{G}^+ = \{a \in \cal{G} \ | \ 0 \leq a\}\) for the \textbf{positive cone} of \(\cal{G}\). 

        \vspace{1.5mm}
        
        An \(\bs{\ell}\)\textbf{-group morphism} is a map \(\phi : \cal{G} \to \cal{H}\) which is both a group morphism and a lattice morphism. We say \(\phi\) is an \textbf{embedding} (resp. \textbf{surjection}) if it is injective (resp. surjective). 

        \vspace{1.5mm}
        
        An \(\ell\)-group \(\cal{G}\) is \textbf{divisible} if, for all \(a \in \cal{G}\) and \(n \in \bb{N}\), there exists \(b \in \cal{G}\) with \(nb = a\). If \(\cal{G}\) is a divisible \(o\)-group, then we call it a \textbf{DOAG}. The \textbf{divisible hull} of \(\cal{G}\), denoted \(\divis(\cal{G})\), is the tensor product \(\bb{Q} \otimes \cal{G}\). We formally identify \(\divis(\cal{G})\) with the set of symbols \(\left\{\frac{a}n \ | \ a \in \cal{G}, \ n \in \bb{N}\right\}\), with operations given for \(a,b \in \cal{G}\) and \(n,m \in \bb{N}\) by \(\frac{a}n - \frac{b}m =\frac{a - b}{nm}\), \(\frac{a}n \join \frac{b}m = \frac{a \join b}{nm}\), and \(\frac{a}n \meet \frac{b}m = \frac{a \meet b}{nm}\). The map \(\iota : \cal{G} \to \divis(\cal{G}) : g \mapsto \frac{g}1\) is an \(\ell\)-group embedding, and \(\divis(\cal{G})\) embeds into any divisible extension of \(\cal{G}\).

        \vspace{1.5mm}
        
        An \(\bs{\ell}\)\textbf{-subgroup} of \(\cal{G}\) is a subgroup \(\cal{H} \subseteq \cal{G}\) that is also a sublattice. We denote this by \(\cal{H} \leq \cal{G}\). An \(\ell\)-subgroup \(\cal{J} \subgrp \cal{G}\) is an \(\bs{\ell}\)\textbf{-ideal} if it is also \textbf{convex} - for all \(a,b \in \cal{J}\) and \(c \in \cal{G}\) with \(a \leq c \leq b\), \(c \in \cal{J}\). We denote this by \(\cal{J} \ideal \cal{G}\), and note that \(\ell\)-ideals are precisely the kernels of \(\ell\)-group morphisms. For \(S \subseteq \cal{G}\), the \textbf{minimal} \(\bs{\ell}\)\textbf{-ideal} containing \(S\), denoted \(\minideal{S}\), is the intersection of all \(\ell\)-ideals containing \(S\).

        \vspace{1.5mm}
        
        An \(\ell\)-ideal \(\cal{J} \ideal \cal{G}\) is \textbf{prime} if any of the following equivalent conditions hold:\((1) \ \quot{\cal{G}}{\cal{J}}\) is totally-ordered; \((2)\) For all \(a,b \in \cal{G}^+\) with \(a \meet b \in \cal{J}\), \(a \in \cal{J}\) or \(b \in \cal{J}\); or \((3)\) For all \(a,b \in \cal{G}\) with \(a \meet b = 0\), \(a \in \cal{J}\) or \(b \in \cal{J}\).

        \vspace{1.5mm}

        We remark that we adopt the convention that \(\cal{G}\) is a prime \(\ell\)-ideal of \(\cal{G}\). We write \(\lspec{\cal{G}}\) for the collection of prime \(\ell\)-ideals of \(\cal{G}\). This is a spectral space with quasicompact open sets \(D(a) = \{\cal{J} \in \lspec{\cal{G}} \ | \ a \notin \cal{J}\} \cup \{\lspec{\cal{G}}\}\). Further, we let \(V(a) = \lspec{\cal{G}} \setminus D(a)\). We define both \(D(S)\) and \(V(S)\) for arbitrary \(S \subseteq \cal{G}\) similarly, and note that \(\minideal{S} = \bigcap V(S)\). This is the \textbf{Nullstellensatz} (for \(\ell\)-groups).

        \vspace{1.5mm}

        Let \(\LGrp\) be the category of \(\ell\)-groups, and \(\Spec\) the category of spectral spaces. Then, the map \(\ell\norm{-Spec} : \LGrp \to \Spec\) given for an \(\ell\)-group morphism \(\phi : \cal{G} \to \cal{H}\) and \(\cal{J} \in \lspec{\cal{H}}\) by \(\lspec{\phi}(\cal{J}) = \phi^{-1}(\cal{J})\) is a contravariant functor. Further, \(\ell\)-Spec sends embeddings to surjections, surjections to embeddings, and for \(a \in \cal{G}\), we have \(\lspec{\phi}^{-1}(V(a)) = V(\phi(a))\).

        \vspace{1.5mm}

        For an \(\ell\)-group \(\cal{G}\) and any \(X \subseteq \lspec{\cal{G}}\) with \(\bigcap X = \{0\}\), we have an \(\ell\)-group embedding:

        \[I : \cal{G} \to \prod_{\cal{J} \in X} \quot{\cal{G}}{\cal{J}} : a \mapsto (a + \cal{J})_{\cal{J} \in X}.\]

        Further, by taking divisible hulls in each component, we can embed \(\cal{G}\) in a product of DOAGs. The theory of DOAGs is complete, and so by taking a saturated extension, we get that there is some DOAG \(\Gamma\) and \(\ell\)-group embedding \(\cal{G} \to \Gamma^X\).

        \vspace{1.5mm}

        Given a set \(S \subseteq \cal{G}\), the \textbf{polar} of \(S\) is the set \(\pol{S} = \{g \in \cal{G} \ | \ (\forall s \in S)(g \meet s = 0)\}\). For an element \(g \in \cal{G}\), we write \(\pol{g} = \pol{\{g\}}\), and \(\ppol{g} = \pol{(\pol{g})}\). 

        \vspace{1.5mm}

        Originally due to \cite[Page \(69\)]{WeispfenningECLGrps}, with the below modern treatment due to \cite[Section 2.1]{ConstructionOfExtClosedLGrpsViaFraisseLimits}, an \(\ell\)-group \(\cal{G}\) is existentially closed if it satisfies the following:

        \begin{itemize}
            \item[\((1)\)] \(\cal{G}\) is divisible.

            \item[\((2)\)] For all \(a,b,c \in \cal{G}^+\) with \(a \meet b = 0\), there exists \(g,h \in \cal{G}\) with \(c = g + h\), and:
            
            \[a \meet h = g \meet b = g \meet h = 0.\]

            \item[\((3)\)] For all \(a \in \cal{G}^+\) non-zero, there exists \(g,h \in \cal{G}^+\) non-zero such that \(g,h \leq a\) and \(g \meet h = 0\).

            \item[\((4)\)] For all \(a \in \cal{G}^+\), there exists \(g \in \cal{G}^+\) non-zero such that \(a \meet g = 0\).

            \item[\((5)\)] \(\minideal{g} = \ppol{g}\) for all \(g \in \cal{G}\).
        \end{itemize}

        If only \((1)\) and \((2)\) hold, then \(\cal{G}\) is algebraically closed. We remark that conditions \((1) - (4)\) are first-order in \(\llgrp\), and so \((5)\) isn't by \cite[Page \(72\)]{WeispfenningECLGrps} and general model theory.

        \vspace{1.5mm}

        An element \(g \in \cal{G}^+\) is a \textbf{weak order unit} if either of the following equivalent conditions holds: \((1)\) For all \(a \in \cal{G}^+\), if \(g \meet a = 0\), then \(a = 0\); or \((2)\) The set \(D(g)\) is dense in \(\lspec{\cal{G}}\).

        \vspace{1.5mm}

        Further, it is a \textbf{strong order unit} if either of the following equivalent conditions holds: \((1)\) For all \(a \in \cal{G}^+\), there exists some \(n \in \bb{N}\) such that \(a \leq ng\); or \((2)\) \(D(g) = \lspec{\cal{G}} \setminus \{\cal{G}\}\).

        \vspace{1.5mm}

        We call \(\cal{G}\) is \textbf{Archimedean} if every \(g \in \cal{G}^+\) is a strong order unit, and it is \textbf{hyper-Archimedean} if, for all surjective \(\ell\)-group morphism \(\phi : \cal{G} \to \cal{H}\), \(\cal{H}\) is Archimedean. Equivalently, \(\cal{G}\) is hyper-Archimedean if every proper \(\ell\)-ideal is minimal. 

        \vspace{3mm}

        In addition to the above, we will make use of results in lattice theory \cite{LatTheoryTextbook}, model theory \cite{Hodges}\cite{ChangAndKeisler}, theory of spectral spaces \cite{TheBible}, and category theory \cite{MacLane}. We remark that, while both \cite{Hodges} and \cite{ChangAndKeisler} provide only a single-sorted treatement of model theory, every result we utilise carries over into the multi-sorted setting. Further, for clarity, we fix some notation:
        
        \begin{itemize}
            \item For \(\cal{L}\) a bounded distributive lattice, and \(X\) a spectral space, we write \(\mu_{\cal{L}} : \cal{L} \to \qcclat{\spectrum{\cal{L}}} : l \mapsto V(l)\) and \(\nu_X : X \to \spectrum{\qcclat{X}} : x \mapsto \{C \in \qcclat{X} \ | \ x \in C\}\) for the natural isomorphisms of spectral duality. These will only be used for technical reasons, in order to ensure maps are explicit and well-defined.

            \item \(\lbdlat = (\leq,\meet,\join,\bot,\top)\) is the single-sorted language of bounded distributive lattices. 

            \item For \(X = (X,\tau)\) a topological space, and \(S \subseteq X\), \(\Cls{S}{\tau}\) is the closure of \(S\) in \(X\). Where clear, we write \(\Cl{S}\) for \(\Cls{S}{\tau}\).

            \item For \(X = (X,\tau)\) a spectral space, the \textbf{patch space} of \(X\) is denoted \(\patch{X}\), with the corresponding topology \(\patch{\tau}\) generated by the basis \(\{U \cap C \ | \ U, X \setminus C \subseteq X \text{ quasicompact open }\}\). 
        \end{itemize}

    \section{Valuations on \(\ell\)-Groups}

\subfile{Chapters/valued-l-groups}

    \section{Conrad-Darnel-Nelson Valuations}

\subfile{Chapters/existing-literature}

    \section{Standard Structures}

\subfile{Chapters/standard-structures}

    \section{The Model Theory of Existentially Closed Densely Valued \(\ell\)-Groups}

\subfile{Chapters/model-theory-of-ec-valued-l-groups}

    \printbibliography
        
\end{document}

%% file: Chapters/valued-l-groups.tex
    \subsection{Motivation}

        As discussed, the prime ideal structure of \(\cal{G}\) is of pivotal importance in understanding \(\cal{G}\) as a whole. Therefore, we want a valuation on an \(\ell\)-group \(\cal{G}\) to be a tool we can use to examine this structure in a first-order setting. The following example gives some motivation as to how we will go about doing this.
            
        \begin{example}
            Let \(C(\bb{R})\) be the \(\ell\)-group of continuous functions from \(\bb{R}\) to \(\bb{R}\), where all operations are carried out pointwise. The set \(\cal{J}_x = \{f \in C(\bb{R}) \ | \ f(x) = 0\}\) is clearly a prime \(\ell\)-ideal of \(C(\bb{R})\). 
        \end{example}

        Hence, in order to gain some access to prime \(\ell\)-ideals in a first-order setting, we would like to construct a notion of valuation that resembles the map \(Z\) sending a continuous function \(f : \bb{R} \to \bb{R}\) to the set \(f^{-1}(0) \subseteq \bb{R}\). However, we observe that this map doesn't respect of any \(\cal{G}\)'s structure. Indeed, we see:
        \[Z(X) \cap Z(X + 1) = \emptyset \neq \{0\} = Z(X) = Z(X \meet (X + 1)).\]

        If however, we instead consider the map \(P\) which sends a continuous function \(f : \bb{R} \to \bb{R}\) to the set \(f^{-1}([0,\infty)) \subseteq \bb{R}\) of points where \(f\) takes on a positive value, then we observe the following:
        
        \begin{itemize}
            \item[\((1)\)] For any \(f,g : \bb{R} \to \bb{R}\) continuous, we have that:
            \begin{align*}
                0 \leq f(x) \text{ and } 0 \leq g(x) \iff & 0 \leq \min \{f(x),g(x)\} \\
                0 \leq f(x) \text{ or } 0 \leq g(x) \iff & 0 \leq \max \{f(x),g(x)\}.
            \end{align*}

            In particular, the map \(P\) preserves the lattice structure on \(\cal{G}\).

            \item[\((2)\)] We can recover \(Z\) in a first-order way by noticing that:
            \[f(x) = 0 \iff 0 \leq (-\abs{f})(x).\]
        \end{itemize}

        In general, an \(\ell\)-group isn't an \(\ell\)-group of continuous functions. Instead, we use the following folklore result to provide a general framework for constructing a valuation.
        
        \begin{theorem} \label{NatFuncEmbedOfLGrps}
            Let \(\cal{G}\) be an \(\ell\)-group. Then, there are \(\ell\)-group embeddings:
            \[\cal{G} \hookrightarrow \prod_{\cal{J} \in \lspec{\cal{G}}} \quot{\cal{G}}{\cal{J}} \hookrightarrow \Gamma^{\lspec{\cal{G}}}\]

            for some divisible \(o\)-group \(\Gamma\). We remark that this map isn't uniquely determined, but for any choice of composition \(I\), we have that:
            \[I(f)(\cal{J}) \geq 0 \iff I(f \meet 0)(\cal{J}) = 0 
            \iff f \meet 0 \in \cal{J}
            \iff \cal{J} \in V(f \meet 0)\]

            for any \(f \in \cal{G}\) and \(\cal{J} \in \lspec{\cal{G}}\).
        \end{theorem}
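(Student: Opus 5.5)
We build the two embeddings directly from the preliminaries and then check the displayed chain of equivalences link by link, arguing in each quotient \(\quot{\cal{G}}{\cal{J}}\).

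\textbf{First embedding.} The map \(a \mapsto (a + \cal{J})_{\cal{J}}\) is an \(\ell\)-group morphism, since each quotient map is one and operations in a product are computed componentwise. It is injective exactly when \(\bigcap \lspec{\cal{G}} = \{0\}\). By the Nullstellensatz, \(\bigcap V(\{0\}) = \minideal{\{0\}} = \{0\}\), and \(V(\{0\}) = \lspec{\cal{G}}\), so the intersection is indeed \(\{0\}\). This is the earlier construction with \(X = \lspec{\cal{G}}\).

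\textbf{Second embedding.} Each \(\quot{\cal{G}}{\cal{J}}\) is an \(o\)-group because \(\cal{J}\) is prime; for \(\cal{J} = \cal{G}\) it is trivial. It embeds into its divisible hull \(\divis(\quot{\cal{G}}{\cal{J}})\), which is a DOAG or trivial. The theory of nontrivial DOAGs is complete, hence so is its theory with constants for elements. So every nontrivial factor, and trivially the zero factor, embeds into a single sufficiently saturated DOAG \(\Gamma\): take \(\Gamma\) to be \(\kappa\)-saturated with \(\kappa\) exceeding all the factor cardinalities, and embed each factor by a partial elementary map. Taking products of these embeddings gives \(\prod_{\cal{J}} \quot{\cal{G}}{\cal{J}} \hookrightarrow \Gamma^{\lspec{\cal{G}}}\), again an \(\ell\)-embedding componentwise. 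This choice of \(\Gamma\) and of the factor embeddings is the only delicate point, and it is exactly where non-uniqueness enters.

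\textbf{The equivalences.} Fix any such composite \(I\). The \(\cal{J}\)-coordinate of \(I\) is \(\iota_{\cal{J}} \circ \pi_{\cal{J}}\), where \(\pi_{\cal{J}}\) is the quotient map and \(\iota_{\cal{J}}\) is an order embedding into the totally ordered \(\Gamma\).

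\begin{itemize}
\item Since \(\Gamma\) is totally ordered, \(I(f)(\cal{J}) \geq 0\) iff \(I(f)(\cal{J}) \meet 0 = 0\). Because \(I\) preserves meets coordinatewise, this says \(I(f \meet 0)(\cal{J}) = 0\).
\item By injectivity of \(\iota_{\cal{J}}\), \(I(f \meet 0)(\cal{J}) = 0\) iff \((f \meet 0) + \cal{J} = \cal{J}\), that is, iff \(f \meet 0 \in \cal{J}\).
\item Finally, \(f \meet 0 \in \cal{J}\) iff \(\cal{J} \notin D(f \meet 0)\), i.e. \(\cal{J} \in V(f \meet 0)\), by the definitions of \(D\) and \(V\).
\end{itemize}

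The one subtlety is the convention that \(\cal{G}\) belongs to \(D(a)\) for all \(a\). For \(\cal{J} = \cal{G}\) all three conditions hold trivially, since the coordinate is \(0\) and \(f \meet 0 \in \cal{G}\). So we should check that \(V(a)\) is read as containing \(\cal{G}\) in the way the paper intends, and otherwise treat \(\cal{G}\) separately.

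No step is a genuine obstacle. The part needing most care is the uniform choice of \(\Gamma\) via completeness and saturation of DOAG.
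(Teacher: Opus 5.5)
This is correct and is the argument the paper sketches in its Preliminaries: the product of quotients over a family with trivial intersection, then divisible hulls, then completeness of DOAG and a saturated model to get a single $\Gamma$. Your reading that $\cal{G} \in V(a)$ for every $a$ is the intended one, so the last equivalence holds by definition. One inference is wrong as stated but unused: completeness of DOAG does not by itself make its expansions by constants complete (that needs quantifier elimination), and you only need $D \equiv \Gamma$ together with $\kappa$-saturation of $\Gamma$ to get the elementary embeddings.
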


        This justifies asking the question ``At what points of its domain is an element of an \(\ell\)-group positive?" in general. Thus, we construct our definition of a valuation on an \(\ell\)-group:
        
        \begin{define} [Valuation on an \(\ell\)-Group]\label{ValuationDef}
            Let \(\cal{G}\) be an \(\ell\)-group. Then, a \textbf{valuation} on \(\cal{G}\) is a lattice morphism:
            \[P : \cal{G} \to \cal{L}\]

            where \(\cal{L}\) is a bounded distributive lattice, satisfying the following:
            
            \begin{itemize}
                \item \(P\) is \textbf{scaling invariant} - \(P(a) = P(na)\) for all \(a \in \cal{G}\) and \(n \in \bb{N}\).

                \item \(P\) is \textbf{positivity affirming} - \(P(a) = \top\) for all \(a \in \cal{G}^+\).

                \item \(P\) is \textbf{essentially total} - for all \(l \in \cal{L}\) with \(l \neq \bot\), there exists some \(a \in \cal{G}\) with \(P(a) = l\).
            \end{itemize}

            Further, if for all \(a \in \cal{G}\), we have that:
            \[P(a) = \top \implies 0 \leq a\]

            then we say that \(P\) is \textbf{positivity detecting}, and a \textbf{dense valuation}.
        \end{define}

        \begin{example}
            Let \(\cal{G}\) be an \(\ell\)-group, and \(\cal{L} = \cal{P}(\lspec{\cal{G}})\). Then, the map:
            \[P : \cal{G} \to \cal{L} : a \mapsto \{\cal{J} \in \lspec{\cal{G}} \ | \ a \meet 0 \in \cal{J}\}\]

            is a dense valuation on \(\cal{G}\). Equivalently, by Theorem \ref{NatFuncEmbedOfLGrps}, we have that:
            \[P(a) = \{\cal{J} \in \lspec{\cal{G}} \ | \ I(a)(\cal{J}) \geq 0\}.\]

            We remark that \(P\) is not surjective - indeed, for every \(a \in \cal{G}\), there is some \(\cal{J} \in \lspec{\cal{G}}\) with \(a \meet 0 \in \cal{J}\), and so \(P(a) \neq \emptyset\).
        \end{example}

        We can bundle an \(\ell\)-group with a valuation to form a multi-sorted structure \((\cal{G},\cal{L},P)\), which we call a \textbf{valued} \(\bs{\ell}\)\textbf{-group}, or a \textbf{densely valued} \(\bs{\ell}\)\textbf{-group} if \(P\) is a dense valuation. In particular, we view this in the language \(\lvlgrp\) consisting of two sorts, \(\bb{G}\) and \(\bb{L}\), and symbols:
        
        \begin{center}
            \begin{tabular}{c|c|c}
                Constant Symbols & Relation Symbols & Function Symbols \\
                \hline
                \(0 : \bb{G}\) & \(\leq : \bb{G} \times \bb{G}\) & \(- : \bb{G} \to \bb{G}\) \\
                \(\top : \bb{L}\) & \(\leqa : \bb{L} \times \bb{L}\) & \(+ : \bb{G} \times \bb{G} \to \bb{G}\) \\
                \(\bot : \bb{L}\) & & \(\meet : \bb{G} \times \bb{G} \to \bb{G}\) \\
                & & \(\join : \bb{G} \times \bb{G} \to \bb{G}\) \\
                & & \(\meeta : \bb{L} \times \bb{L} \to \bb{L}\) \\
                & & \(\joina : \bb{L} \times \bb{L} \to \bb{L}\) \\
                & & \(P : \bb{G} \to \bb{L}\)
            \end{tabular}
        \end{center}

        In this language, there is evidently a theory of both valued and densely \(\ell\)-groups, which we denote \(\tvlgrp\) and \(\tdvlgrp\) respectively. We observe that these theories are \(\forall\exists\). This also gives us the notion of a valued \(\ell\)-group morphism for free.

        \begin{define} [Valued \(\ell\)-Group Morphism]
            Let \(\cal{V} = (\cal{G},\cal{L},P)\) and \(\cal{W} = (\cal{H},\cal{K},Q)\) be valued \(\ell\)-groups. Then, a \textbf{valued} \(\bs{\ell}\)\textbf{-group morphism} from \(\cal{V}\) to \(\cal{W}\) is a pair \((\phi,\psi)\) consisting of an \(\ell\)-group morphism and a bounded lattice morphism, and such that \(Q \circ \phi = \psi \circ P\).

            Further, \((\phi,\psi)\) is an \textbf{embedding} (resp. \textbf{surjection}) if both \(\phi\) and \(\psi\) are embeddings (resp. surjections).
        \end{define}

        We write \(\ValGrp\) for the category of valued \(\ell\)-groups and valued \(\ell\)-group morphisms, and \(\DValGrp\) for the full subcategory whose objects are densely valued \(\ell\)-groups.

    \subsection{The Natural Valuation of an \(\ell\)-Group}

        Theorem \ref{NatFuncEmbedOfLGrps} suggests that there is a natural way to associate a valuation to an \(\ell\)-group, and that this can be done in such a way that the associated valuation looks precisely like taking a function to the set of points where it has a positive value. It turns out that not only is this true, but this construction is functorial. 

        \begin{lemma} \label{NatValOnFuncs}
            Let \(\cal{G}\) be an \(\ell\)-group. Then:
            \begin{itemize}
                \item[\((1)\)] The triple \(\val{G} \coloneqq (\cal{G},\qcclat{\lspec{\cal{G}}},V(- \meet 0))\) is a densely valued \(\ell\)-group.

                \item[\((2)\)] If \(\cal{H}\) is an \(\ell\)-group, and \(\phi : \cal{G} \to \cal{H}\) an \(\ell\)-group morphism, then there is a valued \(\ell\)-group morphism:
                \[\val{\phi} \coloneqq (\phi,\qcclat{\lspec{\phi}}) : \val{G} \to \val{H}.\]

                \item[\((3)\)] If \(\phi\) is an embedding (resp. surjection), then so is \(\val{\phi}\).

                \item[\((4)\)] \(\valfunc\) is a covariant functor from \(\LGrp\) to \(\DValGrp\).
            \end{itemize}
        \end{lemma}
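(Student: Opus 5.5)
The plan is to verify each item directly from the definitions. The one structural input is a description of \(\qcclat{\lspec{\cal{G}}}\): every closed constructible set should be of the form \(V(b)\) with \(b \in \cal{G}^+\). For \((1)\), I first check that each \(V(a \meet 0)\) lies in \(\qcclat{\lspec{\cal{G}}}\), which holds because it is the complement of the quasicompact open \(D(a \meet 0)\). Every other verification is pointwise at a prime \(\cal{J}\), using the fact that \(\quot{\cal{G}}{\cal{J}}\) is a torsion-free \(o\)-group.

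Fix \(a,b \in \cal{G}\) and \(\cal{J} \in \lspec{\cal{G}}\), and set \(x = a \meet 0\) and \(y = b \meet 0\), so that \(x + \cal{J}\) and \(y + \cal{J}\) are both \(\leq 0\) in a chain. We have \((a \meet b) \meet 0 = x \meet y\) and \((a \join b)\meet 0 = x \join y\). The minimum of two non-positive elements of a chain is \(0\) iff both are \(0\), and their maximum is \(0\) iff one of them is \(0\). This gives
\[V((a\meet b)\meet 0) = V(a \meet 0) \cap V(b \meet 0), \qquad V((a \join b)\meet 0) = V(a\meet 0)\cup V(b \meet 0),\]
so \(P\) is a lattice morphism.

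The valuation axioms then follow in turn.
\begin{itemize}
\item \emph{Scaling invariance.} Since \((na)\meet 0 = n(a \meet 0)\) and \(\quot{\cal{G}}{\cal{J}}\) is torsion-free, \(V((na)\meet 0) = V(a \meet 0)\).
\item \emph{Positivity affirming.} If \(a \geq 0\) then \(a \meet 0 = 0\) and \(V(0) = \top\).
\item \emph{Positivity detecting.} If \(V(a \meet 0)\) is everything, then \(a \meet 0 \in \bigcap \lspec{\cal{G}} = \minideal{\{0\}} = \{0\}\) by the Nullstellensatz, so \(a \geq 0\).
\item \emph{Essential totality.} A non-\(\bot\) element of \(\qcclat{\lspec{\cal{G}}}\) is the complement of a quasicompact open set. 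Such an open set is a finite union \(D(s_1)\cup\dots\cup D(s_n) = D(\abs{s_1}\join\dots\join\abs{s_n})\), so the closed set is \(V(b)\) with \(b = \abs{s_1}\join\cdots\join\abs{s_n} \geq 0\). Then \(-b \leq 0\) gives \(V(b) = V(-b) = V((-b)\meet 0) = P(-b)\).
\end{itemize}

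The main obstacle I expect is essential totality, specifically the bookkeeping with the paper's convention that \(\cal{G}\) itself is a prime \(\ell\)-ideal. That convention affects which sets count as \(\bot\), and whether every quasicompact open really is a finite union of the basic sets \(D(s)\). I would settle this by appealing to the stated basis \(\{D(a)\}\) together with quasicompactness, and by noting that \(V(b)\) always contains the point \(\cal{G}\), so the empty set is the only closed constructible set that needs excluding.

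For \((2)\), \(\lspec{\phi}\) is a spectral map, so taking preimages sends closed constructible sets to closed constructible sets. Hence \(\qcclat{\lspec{\phi}} : C \mapsto \lspec{\phi}^{-1}(C)\) is a bounded lattice morphism. Compatibility with the valuations is the stated identity \(\lspec{\phi}^{-1}(V(a\meet 0)) = V(\phi(a \meet 0)) = V(\phi(a)\meet 0)\), using that \(\phi\) preserves \(\meet\) and \(0\).

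For \((3)\), I use that \(\ell\)-Spec sends embeddings to surjections and surjections to embeddings.
\begin{itemize}
\item If \(\phi\) is an embedding, then \(\lspec{\phi}\) is surjective, so \(C \mapsto \lspec{\phi}^{-1}(C)\) is injective.
\item If \(\phi\) is a surjection, then every closed constructible set of \(\lspec{\cal{H}}\) is \(V(b)\) by the description in \((1)\). Writing \(b = \phi(b')\), we get \(V(b) = \lspec{\phi}^{-1}(V(b'))\), so \(\qcclat{\lspec{\phi}}\) is surjective.
\end{itemize}

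For \((4)\), functoriality follows because \(\ell\)-Spec and \(\qcclat{-}\) (via preimages) are both contravariant functors. Their composite is therefore covariant and preserves identities and composition, and the first coordinate of \(\val{\phi}\) is \(\phi\) itself.
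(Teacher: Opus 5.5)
Your proposal is correct and follows essentially the same route as the paper. Your pointwise check of (1) in the chains \(\quot{\cal{G}}{\cal{J}}\) is what the paper compresses into the observation \(\cal{J} \in V(f \meet 0) \iff I(f)(\cal{J}) \geq 0\), and (2)--(4) are the same preimage computation and functoriality argument. You are more explicit than the paper on essential totality and on the surjection case of (3), since you spell out that the closed constructible sets of \(\lspec{\cal{G}}\) are exactly \(\emptyset\) and the sets \(V(b)\). In (3), \(\emptyset\) is trivially a preimage, so leaving it out of your list there is harmless.
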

        \begin{proof}
            \((1)\) Obvious from the observation that \(\cal{J} \in V(f \meet 0)\) if and only if \(I(f)(\cal{J}) \geq 0\), where \(I\) is an \(\ell\)-group embedding of \(\cal{G}\) in \(\Gamma^{\lspec{\cal{G}}}\) as in Theorem \ref{NatFuncEmbedOfLGrps}. \\
        
            \((2)\) It is clear that \(\qcclat{\lspec{\phi}}\) is a bounded lattice morphism. Hence, it suffices to show that this diagram commutes:
            \begin{center}
                \begin{tikzcd}
                    \cal{G} \arrow[rrr, "\phi" description] \arrow[dd, "V(- \meet 0)" description] &  &  & \cal{H} \arrow[dd, "V(- \meet 0)" description] \\
                    &  &  &                                                \\
                    \qcclat{\lspec{\cal{G}}} \arrow[rrr, "\qcclat{\lspec{\phi}}" description]      &  &  & \qcclat{\lspec{\cal{H}}}                      
                \end{tikzcd}
            \end{center}

            Thus, let \(a \in \cal{G}\) and \(\cal{J} \in \lspec{\cal{H}}\). We see that:
            \begin{align*}
                \cal{J} \in \qcclat{\lspec{\phi}}(V(a \meet 0)) \iff & \lspec{\phi}(\cal{J}) \in V(a \meet 0) \\
                \iff & a \meet 0 \in \lspec{\phi}(\cal{J}) \\
                \iff & \phi(a \meet 0) \in \cal{J} \\
                \iff & \cal{J} \in V(\phi(a) \meet 0)
            \end{align*}

            as required. \\
            
            \((3)\) Immediate, as the functors \(\bar{\cal{K}}\) and \(\ell\)-Spec both have this property. \\

            \((4)\) As \(\bar{\cal{K}}\) and \(\ell\)-Spec are both contravariant functors, then their composition is a covariant functor, and so we are done.
        \end{proof}

        Even stronger than being a functor, we can show that \(\valfunc\) is a left adjoint to the forgetful functor \(\iota : \DValGrp \to \LGrp\). To do this, we first prove the following lemma.

        \begin{lemma} \label{NatValFuncExtensions}
            Let \(\cal{V} \coloneqq (\cal{G},\cal{L},P)\) be a valued \(\ell\)-group. Then, there is a unique bounded lattice morphism \(\epsilon_{\cal{V}} : \qcclat{\lspec{\cal{G}}} \to \cal{L}\) such that this diagram commutes:
            \begin{center}
                \begin{tikzcd}
                    \cal{G} \arrow[rrdd, "P" description] \arrow[dd, "V(- \meet 0)" description] &  &         \\
                    &  &         \\
                    \qcclat{\lspec{\cal{G}}} \arrow[rr, "\epsilon_{\cal{V}}" description]                      &  & \cal{L}
                \end{tikzcd}
            \end{center}

            Further, \(\epsilon_{\cal{V}}\) is surjective and given by \(\epsilon_{\cal{V}}(V(a)) = P(-\abs{a})\) and \(\epsilon_{\cal{V}}(\emptyset) = \bot\).
        \end{lemma}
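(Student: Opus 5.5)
The plan is to first describe the elements of \(\qcclat{\lspec{\cal{G}}}\) concretely, then let commutativity of the diagram force the formula for \(\epsilon_{\cal{V}}\), and finally check that this formula is well defined and is a surjective bounded lattice morphism.

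\textbf{Step 1: the closed constructible sets.} The quasicompact open sets of \(\lspec{\cal{G}}\) are the finite unions of basic opens \(D(a)\). I will use the identities
\[V(a) \cap V(b) = V(\abs{a} \join \abs{b}), \qquad V(a) \cup V(b) = V(\abs{a} \meet \abs{b}),\]
where the second holds because prime ideals satisfy condition \((2)\) of primality. From these, every element of \(\qcclat{\lspec{\cal{G}}}\) is either \(\emptyset\) or of the form \(V(a)\) for some \(a \in \cal{G}\). I also note the following:
\begin{itemize}
\item \(V(a) = V(\abs{a}) = V(-\abs{a})\), since ideals are convex subgroups;
\item \(V(0)\) is the whole space;
\item by the convention that \(\cal{G}\) is itself prime, \(\cal{G} \in V(a)\) always, so \(V(a) \neq \emptyset\).
\end{itemize}
Hence \(\emptyset\) must be treated separately as \(\bot\).

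\textbf{Step 2: uniqueness.} Suppose \(\epsilon\) is any bounded lattice morphism making the diagram commute. Then \(\epsilon(\emptyset) = \bot\). For \(a \in \cal{G}\) we have \(-\abs{a} \meet 0 = -\abs{a}\), so
\[\epsilon(V(a)) = \epsilon(V(-\abs{a} \meet 0)) = P(-\abs{a}).\]
By Step 1 this determines \(\epsilon\) on every element, which gives both uniqueness and the stated formula.

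\textbf{Step 3: existence.} The main obstacle is well-definedness: I must show that \(V(a) = V(b)\) implies \(P(-\abs{a}) = P(-\abs{b})\). I will prove the stronger monotonicity statement that \(V(a) \subseteq V(b)\) implies \(P(-\abs{a}) \sqsubseteq P(-\abs{b})\), arguing as follows.
\begin{itemize}
\item Suppose \(V(a) \subseteq V(b)\). By the Nullstellensatz, \(b \in \bigcap V(a) = \minideal{a}\).
\item Membership in \(\minideal{a}\) means \(\abs{b} \leq n\abs{a}\) for some \(n \in \bb{N}\), so \(-n\abs{a} \leq -\abs{b}\).
\item Since \(P\) is a lattice morphism it is monotone, and \(P\) is scaling invariant. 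Together these give
\[P(-\abs{a}) = P(-n\abs{a}) \sqsubseteq P(-\abs{b}).\]
\end{itemize}

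With \(\epsilon_{\cal{V}}\) well defined, the remaining checks are routine:
\begin{itemize}
\item \textbf{Meets:} \(\epsilon_{\cal{V}}(V(a) \cap V(b)) = P(-(\abs{a}\join\abs{b})) = P(-\abs{a} \meet -\abs{b}) = \epsilon_{\cal{V}}(V(a)) \meeta \epsilon_{\cal{V}}(V(b))\).
\item \textbf{Joins:} these follow in the same way.
\item \textbf{The empty set:} any meet or join involving \(\emptyset\) is trivial, since \(\epsilon_{\cal{V}}(\emptyset) = \bot\).
\item \textbf{Top:} \(\epsilon_{\cal{V}}(V(0)) = P(0) = \top\) by positivity affirmation.
\item \textbf{Commutativity:} since \(a \meet 0 \leq 0\), we get \(\epsilon_{\cal{V}}(V(a \meet 0)) = P(a \meet 0) = P(a) \meeta P(0) = P(a)\).
\item \textbf{Surjectivity:} \(\bot = \epsilon_{\cal{V}}(\emptyset)\). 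If \(l \neq \bot\), essential totality gives some \(a\) with \(l = P(a) = \epsilon_{\cal{V}}(V(a \meet 0))\).
\end{itemize}
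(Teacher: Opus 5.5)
Your proposal is correct and follows essentially the same route as the paper: uniqueness and the formula are forced by \(V(a) = V((-\abs{a}) \meet 0)\), well-definedness comes from the Nullstellensatz together with scaling invariance and monotonicity of \(P\), and the bounded-lattice, commutativity and surjectivity checks match the paper's. Two small differences are worth noting. Your one-sided monotonicity form of the well-definedness step packages the paper's two-sided inequality chain more cleanly. Your explicit observation that every element of \(\qcclat{\lspec{\mathcal{G}}}\) is either \(\emptyset\) or some \(V(a)\) makes precise what the paper uses tacitly. One minor correction there: strictly, the whole space must be listed as a quasicompact open alongside the finite unions of the \(D(a)\), since no \(D(a)\) contains \(\mathcal{G}\). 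That is exactly why \(\emptyset\) has to be handled separately.
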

        \begin{proof}
            We first show that \(\epsilon_{\cal{V}}\) is well-defined. Hence, let \(a,b \in \cal{G}\) such that \(V(a) = V(b)\). By the Nullstellensatz, we have that \(\minideal{a} = \minideal{b}\), and so there exists \(n,m \in \bb{N}\) such that \(\abs{a} \leq n\abs{b}\) and \(\abs{b} \leq m\abs{a}\). In particular, \(-n\abs{b} \leq -\abs{a}\) and \(-m\abs{a} \leq -\abs{b}\), and so:
            \[P(-\abs{b}) = P(-n\abs{b}) \leqa P(-\abs{a}) = P(-m\abs{a}) \leqa P(-\abs{a}).\]
            
            Thus, \(\epsilon_{\cal{V}}(V(a)) = \epsilon_{\cal{V}}(V(b))\). Next, we show that \(\epsilon_{\cal{V}}\) is a bounded lattice morphism. Hence, let \(a,b \in \cal{G}^+\) and consider that:
            \begin{align*}
                \epsilon_{\cal{V}}(V(a) \cup V(b)) = & \epsilon_{\cal{V}}(V(a \meet b)) \\
                = & P(-(a \meet b)) \\
                = & P(-a) \joina P(-b) \\
                = & \epsilon_{\cal{V}}(V(a)) \joina \epsilon_{\cal{V}}(V(b)).
            \end{align*}

            Hence, \(\epsilon_{\cal{V}}\) respects joins, and mutatis mutandis respects meets. By definition, \(\epsilon_{\cal{V}}(\emptyset) = \bot\), , and we note that \(\epsilon_{\cal{V}}(V(0)) = P(-\abs{0}) = \top\).  Thus, \(\epsilon_{\cal{V}}\) is a bounded lattice morphism. Next, we show that it uniquely makes this diagram commute:
            \begin{center}
                \begin{tikzcd}
                    & \cal{G} \arrow[rd, "P" description] \arrow[ld, "V(- \meet 0)" description] &         \\
                    \qcclat{\lspec{\cal{G}}} \arrow[rr, "\epsilon_{\cal{V}}" description] &                                                                            & \cal{L}
                \end{tikzcd}
            \end{center}

            For \(a \in \cal{G}\), we see that:
            \[\epsilon_{\cal{V}}(V(a \meet 0)) = P(-\abs{a \meet 0}) = P(-((-a) \join 0)) = P(a \meet 0) = P(a).\]

            Hence, the above diagram commutes. Further, if \(\gamma : \qcclat{\lspec{\cal{G}}} \to \cal{L}\) also makes it commute, then we see that for \(a \in \cal{G}\):
            \[\gamma(V(a)) = \gamma(V(-\abs{a})) = \gamma(V((-\abs{a}) \meet 0)) = P(-\abs{a}) = \epsilon_{\cal{V}}(V(a)).\]

            Hence, \(\gamma = \epsilon_{\cal{V}}\), and so \(\epsilon_{\cal{V}}\) is unique. Finally, let \(l \in \cal{L}\). If \(l = \bot\), then \(\epsilon_{\cal{V}}(\emptyset) = l\). Otherwise, as \(P\) is essentially total, then there exists \(a \in \cal{G}\) such that \(P(a) = l\). By the above, we have that \(l = P(a) = \epsilon_{\cal{V}}(V(a \meet 0))\), as required.
        \end{proof}

        Using this, it is almost immediate that \(\valfunc\) is left adjoint to \(\iota\). Regardless, we include a proof for completeness.

        \begin{corollary} [The Natural Valuation Adjunction]\label{NatValAdjoint}
            The functor \(\valfunc\) is left adjoint to forgetful functors \(\iota^{\norm{d}} : \DValGrp \to \LGrp\) and \(\iota : \ValGrp \to \LGrp\).
        \end{corollary}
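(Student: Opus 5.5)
The plan is to verify the universal-arrow form of the adjunction. For each \(\ell\)-group \(\cal{G}\), take as unit the identity \(\eta_{\cal{G}} = \id{\cal{G}} : \cal{G} \to \iota(\val{G}) = \cal{G}\). Then show: for every valued \(\ell\)-group \(\cal{V} = (\cal{H},\cal{L},P)\) and every \(\ell\)-group morphism \(\phi : \cal{G} \to \cal{H}\), there is a unique valued \(\ell\)-group morphism \((\phi',\psi) : \val{G} \to \cal{V}\) with \(\phi' \circ \eta_{\cal{G}} = \phi\). This suffices, by the standard characterisation of left adjoints via universal arrows (Mac Lane), together with functoriality of \(\valfunc\) from Lemma \ref{NatValOnFuncs}(4).

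The same argument handles both \(\iota\) and \(\iota^{\norm{d}}\). By Lemma \ref{NatValOnFuncs}(1), \(\val{G}\) is densely valued, so it lies in both \(\DValGrp\) and \(\ValGrp\). Also, \(\DValGrp\) is a full subcategory, so its hom-sets agree with those of \(\ValGrp\). Hence the universal property for \(\iota\), restricted to densely valued targets, gives the one for \(\iota^{\norm{d}}\).

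For existence, the first component is forced to be \(\phi' = \phi\). For the second component, factor through \(\val{H}\). Lemma \ref{NatValOnFuncs}(2) gives the valued morphism \(\val{\phi} = (\phi,\qcclat{\lspec{\phi}}) : \val{G} \to \val{H}\). Lemma \ref{NatValFuncExtensions}, applied to \(\cal{V}\), gives the bounded lattice morphism \(\epsilon_{\cal{V}} : \qcclat{\lspec{\cal{H}}} \to \cal{L}\) with \(\epsilon_{\cal{V}} \circ V(- \meet 0) = P\), so \((\id{\cal{H}},\epsilon_{\cal{V}}) : \val{H} \to \cal{V}\) is a valued morphism. Set \(\psi = \epsilon_{\cal{V}} \circ \qcclat{\lspec{\phi}}\). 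The required identity then comes from composing the two commuting squares:
\[P \circ \phi = \epsilon_{\cal{V}} \circ V(\phi(-) \meet 0) = \epsilon_{\cal{V}} \circ \qcclat{\lspec{\phi}} \circ V(- \meet 0).\]

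Uniqueness is the step needing care. Suppose \(\gamma : \qcclat{\lspec{\cal{G}}} \to \cal{L}\) is a bounded lattice morphism with \(\gamma \circ V(- \meet 0) = P \circ \phi\). Every nonempty member of \(\qcclat{\lspec{\cal{G}}}\) is of the form \(V(a)\), which the definition of \(\epsilon_{\cal{V}}\) in Lemma \ref{NatValFuncExtensions} already presupposes. Since \(V(a) = V(-\abs{a}) = V((-\abs{a}) \meet 0)\), we get \(\gamma(V(a)) = P(\phi(-\abs{a})) = \psi(V(a))\). Also \(\gamma(\emptyset) = \bot = \psi(\emptyset)\) because both maps are bounded. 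This mirrors the uniqueness argument in Lemma \ref{NatValFuncExtensions} and forces \(\gamma = \psi\).

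Naturality of the resulting hom-set bijection \((\phi,\psi) \mapsto \phi\) is automatic from the universal-arrow formulation. If one prefers to check it directly, it amounts to the fact that \(\iota\) simply forgets the lattice component.
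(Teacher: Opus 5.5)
Your proof is correct and uses the same ingredients as the paper: the commuting square of Lemma \ref{NatValOnFuncs}(2), the map \(\epsilon_{\cal{V}}\) of Lemma \ref{NatValFuncExtensions}, and the rewriting \(V(a) = V((-\abs{a}) \meet 0)\), which forces the lattice component to be \(\epsilon_{\cal{V}} \circ \qcclat{\lspec{\phi}}\). The difference is only in packaging. You verify the universal property of the unit \(\id{\cal{G}} : \cal{G} \to \iota(\val{G})\), whereas the paper verifies the dual property of the counit \((\id{\cal{H}},\epsilon_{\cal{V}}) : \val{H} \to \cal{V}\) (in your notation), i.e.\ that every valued morphism \(\val{G} \to \cal{V}\) factors as \((\id{\cal{H}},\epsilon_{\cal{V}}) \circ \val{\phi}\). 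So your uniqueness computation is exactly the paper's existence computation, and your fullness argument for \(\iota^{\norm{d}}\) spells out what the paper leaves as ``similarly''.
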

        \begin{proof}
            We show the case for \(\iota\), with the case for \(\iota^{\norm{d}}\) following similarly. Let \(\cal{V} \coloneqq (\cal{G},\cal{L},P)\) be a valued \(\ell\)-group, \(\cal{H}\) an \(\ell\)-group, and \((\phi,\psi) : \val{H} \to \cal{V}\) a valued \(\ell\)-group morphism. We claim that \(\phi\) is the unique \(\ell\)-group morphism such that the diagram \((\dagger)\) below commutes:
            \begin{center}
                \begin{tikzcd}
                    \val{H} \arrow[rrdd, "{(\phi,\psi)}" description] \arrow[dd, "\valfunc(\phi)" description] &  &         \\
                    &  &         \\
                    \val{G} \arrow[rr, "{(\id{\cal{G}},\epsilon_{\cal{V}})}" description]                      &  & \cal{V}
                \end{tikzcd}
            \end{center}

            where \(\epsilon_{\cal{V}}\) is the map per Lemma \ref{NatValFuncExtensions}. To show commutativity of \((\dagger)\), it is sufficient to show that these two diagrams commute:
            \begin{center}
                \begin{tikzcd}
                    \cal{H} \arrow[rrdd, "\phi" description] \arrow[dd, "\phi" description] &  &         &  & \qcclat{\lspec{\cal{H}}} \arrow[rrdd, "\psi" description] \arrow[dd, "\qcclat{\lspec{\phi}}" description] &  &         \\
                    &  &         &  &                                                                                                           &  &         \\
                    \cal{G} \arrow[rr, "\id{\cal{G}}" description]                          &  & \cal{G} &  & \qcclat{\lspec{\cal{G}}} \arrow[rr, "\epsilon_{\cal{V}}" description]                                     &  & \cal{L}
                \end{tikzcd}
            \end{center} 

            Clearly, the leftmost diagram commutes. Hence, let \(a \in \cal{H}\) and consider:
            \begin{align*}
                \psi(V(a)) = & \psi(V((-\abs{a}) \meet 0)) \\
                = & (P \circ \phi)(-\abs{a}) \\
                = & P(-\abs{\phi(a)}) \\
                = & \epsilon_{\cal{V}}(V(\phi(a)) \\
                = & (\epsilon_{\cal{V}} \circ \qcclat{\lspec{\phi}})(V(a)).
            \end{align*}

            Thus, \((\dagger)\) commutes. Finally, if the map \(\alpha: \cal{H} \to \cal{G}\) is another \(\ell\)-group morphism such that \(\valfunc(\alpha)\) makes \((\dagger)\) commute, then we remark that it is immediate that, for \(a \in \cal{H}\):
            \[\phi(a) = (\id{\cal{G}} \circ \alpha)(a) = \alpha(a).\]

            Hence, \(\phi = \alpha\), and so by definition, \(\valfunc \adjoint \iota\). 
        \end{proof}

        This adjunction mimics the case of valued fields - in particular, it is clear that the map which equips a field with the trivial valuation \(\nu : K \to \bb{Z} : k \mapsto 0\) is left adjoint to the forgetful functor. However, we notice that here the natural valuation of an \(\ell\)-group is much more interesting - the lattice \(\qcclat{\lspec{\cal{G}}}\) carries non-trivial information about \(\cal{G}\) that can't be accessed in a first-order way in \(\cal{G}\) alone. 

    \subsection{Topological Representations of Valued \(\ell\)-Groups}

        The previous section suggests that valuations on \(\ell\)-groups are connected to their \(\ell\)-spectra. Indeed, we can construct a valuation on an \(\ell\)-group from any spectral subspace of its \(\ell\)-spectrum, as the next lemma demonstrates.

        \begin{define} [Dense Set]
            Let \(X = (X,\tau)\) be a topological space, and \(D \subseteq X\). Then, \(D\) is \textbf{dense in } \(\bs{X}\) if \(\Cls{D}{\tau} = X\).
        \end{define}

        \begin{lemma} \label{ValGrpsFromDenseSpecSubspaces}
            Let \(\cal{G}\) be an \(\ell\)-group, and \(X \subseteq \lspec{\cal{G}}\) be a spectral subspace. Then, we have a valuation \(P : \cal{G} \to \qcclat{X}\) given by \(P(a) = V(a \meet 0) \cap X\). Further, if \(X\) is dense in \(\lspec{\cal{G}}\), then \(P\) is a dense valuation.
        \end{lemma}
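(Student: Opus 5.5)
We check the axioms of Definition \ref{ValuationDef} in turn for \(P(a) = V(a \meet 0) \cap X\), and then prove positivity detection under the density hypothesis. First, \(P\) must land in \(\qcclat{X}\). Each \(V(a \meet 0)\) is closed constructible in \(\lspec{\cal{G}}\), being the complement of the quasicompact open \(D(a \meet 0)\). Since \(X\) is a spectral subspace, the inclusion \(X \hookrightarrow \lspec{\cal{G}}\) is a spectral map, so the trace \(V(a\meet 0) \cap X\) is closed constructible in \(X\).

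\textbf{Lattice morphism, scaling invariance, positivity affirming.} The lattice identities hold already for \(V(-\meet 0)\) on \(\lspec{\cal{G}}\), and intersecting with \(X\) preserves finite unions and intersections. Concretely, \(V((a\meet b)\meet 0) = V(a\meet 0)\cap V(b\meet 0)\) because \((a\meet 0)\meet(b\meet 0)\) lies in a convex subgroup \(\cal{J}\) iff both \(a\meet 0\) and \(b \meet 0\) do. For joins we use primality: \((a\join b)\meet 0 = (a\meet 0)\join(b\meet 0)\), and in the totally ordered quotient \(\quot{\cal{G}}{\cal{J}}\) the image of this join is \(0\) iff one of the images is \(0\). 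Alternatively, both facts follow pointwise from the embedding \(I\) of Theorem \ref{NatFuncEmbedOfLGrps} via \(I(f)(\cal{J})\geq 0 \iff \cal{J}\in V(f\meet 0)\). Scaling invariance holds because \(n(a\meet 0) = (na)\meet 0\) and \(V(nc) = V(c)\), since \(\cal{J}\) is a subgroup and convexity gives \(\abs{c}\leq n\abs{c}\). Positivity affirming holds because \(a \geq 0\) gives \(a\meet 0 = 0\), so \(P(a) = X = \top\).

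\textbf{Essential totality.} Let \(C \in \qcclat{X}\) with \(C \neq \emptyset\). Since \(X\) is a spectral subspace, every closed constructible subset of \(X\) is a trace of one on \(\lspec{\cal{G}}\): write \(C = K\cap X\) with \(K = V(b)\) for some \(b\). We need the fact that closed constructible sets of \(\lspec{\cal{G}}\) are exactly the \(V(b)\); this holds because \(V(a)\cup V(b) = V(\abs a\meet\abs b)\) and \(V(a)\cap V(b) = V(\abs a+\abs b)\), so the sets \(V(b)\) are closed under finite unions and intersections. Then \(P(-\abs b) = V(-\abs b)\cap X = V(b)\cap X = C\), since \((-\abs{b})\meet 0 = -\abs{b}\).

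\textbf{Density.} Suppose \(X\) is dense and \(P(a) = X\), so every \(\cal{J}\in X\) lies in the closed set \(V(a\meet 0)\). By density, \(V(a\meet 0)\supseteq \Cl{X} = \lspec{\cal{G}}\). Hence \(a\meet 0\in\bigcap\lspec{\cal{G}}\), and the Nullstellensatz gives \(\bigcap V(\emptyset) = \minideal{\emptyset} = \{0\}\). So \(a\meet 0 = 0\), that is, \(0\leq a\).

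The main obstacle is essential totality. It requires the precise meaning of \enquote{spectral subspace}: the inclusion must be spectral, and the closed constructible sets of \(X\) must be exactly the traces of those of \(\lspec{\cal{G}}\). I would justify this by noting that spectral subspaces are patch-closed, so the quasicompact opens of \(X\) are the traces of the quasicompact opens of the ambient space. I would also need the description of closed constructible sets as the \(V(b)\), which requires care with the convention that \(\cal{G}\in\lspec{\cal{G}}\).
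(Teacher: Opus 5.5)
Your proof is correct and follows essentially the same route as the paper. The lattice-morphism, scaling-invariance and positivity-affirming properties are inherited from \(V(-\meet 0)\) on \(\lspec{\cal{G}}\) by intersecting with \(X\). Positivity detection follows because density forces \(a\meet 0\) into every prime \(\ell\)-ideal, and their intersection is \(\{0\}\).

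You are more careful than the paper about essential totality, which the paper calls clear. It does rely on nonempty closed constructible subsets of \(X\) being traces \(V(b)\cap X\). One small correction: only the nonempty closed constructible sets of \(\lspec{\cal{G}}\) are of the form \(V(b)\), since \(\emptyset\) is also closed constructible but omits the point \(\cal{G}\). Your argument only uses the nonempty case, so this does not affect it.
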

        \begin{proof}
            As \(V(- \meet 0) : \cal{G} \to \qcclat{\lspec{\cal{G}}}\) is a valuation, it is clear that \(P\) is a scaling invariant and essentially total lattice morphism. Further, we notice that for \(a \in \cal{G}^+\), if \(V(a \meet 0) = \lspec{\cal{G}}\), then \(P(a) = X\). Thus, \(P\) is a valuation. 
            
            Now, if \(X\) is dense, then \(X \cap D(a) \neq \emptyset\) for any non-zero \(a \in \cal{G}\). Thus, if \(P(a) = x\), then \(X \subseteq V(a \meet 0)\). Hence, \(X \cap D(a \meet 0) = \emptyset\), and so by density of \(X\), \(a \meet 0 = 0\). Thus, \(0 \leq a\), and so \(P\) is a dense valuation.
        \end{proof}

        This is already a powerful tool for producing examples of valued \(\ell\)-groups. However, the remarkable thing is that every valuation on \(\cal{G}\) is of this form.

        \begin{lemma} \label{DenseSubspaceAltDef}
            Let \(\cal{G}\) be an \(\ell\)-group, and \(X \subseteq \lspec{\cal{G}}\) a spectral subspace. Then, \(X\) is dense in \(\lspec{\cal{G}}\) if and only if \(\bigcap X = \{0\}\).
        \end{lemma}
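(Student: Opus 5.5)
We prove the two directions separately, working directly with the basic open sets \(D(a)\) and the closed sets \(V(a)\) of \(\lspec{\cal{G}}\). Throughout we use that the sets \(D(a)\) for \(a \in \cal{G}\) form a basis of quasicompact opens, and that \(D(a) = D(\abs{a})\). The latter holds because \(\ell\)-ideals are convex subgroups, so \(a \in \cal{J}\) iff \(\abs{a} \in \cal{J}\). One subtlety is the convention that \(\cal{G}\) itself is a prime \(\ell\)-ideal, which lies in every \(D(a)\). We must check whether the paper intends \(D(a)\) to contain the point \(\cal{G}\) or the whole space. Reading \(\{\lspec{\cal{G}}\}\) as a typo for \(\{\cal{G}\}\) makes no sense, since \(a \in \cal{G}\) always. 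So we will read density as "every nonempty basic open \(D(a)\) with \(a \neq 0\) meets \(X\)", which is how the previous lemma used it. We will fix this reading at the start of the proof.

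For the direction "dense implies \(\bigcap X = \{0\}\)", suppose \(X\) is dense and let \(0 \neq a \in \bigcap X\). Then \(a \in \cal{J}\) for every \(\cal{J} \in X\), so \(X \subseteq V(a)\), i.e. \(X \cap D(a) = \emptyset\) (ignoring the trivial point). Since \(a \neq 0\), the Nullstellensatz \(\minideal{\{0\}} = \{0\} = \bigcap \lspec{\cal{G}}\) gives a prime \(\cal{J}\) with \(a \notin \cal{J}\). Hence \(D(a)\) is a nonempty open set missing \(X\), contradicting density. This is exactly the argument used in Lemma \ref{ValGrpsFromDenseSpecSubspaces}.

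For the converse, suppose \(\bigcap X = \{0\}\), and let \(U\) be a nonempty open set. It contains a nonempty basic open \(D(a)\), and we may take \(a \neq 0\), since \(D(0)\) consists only of the trivial point. As \(a \notin \{0\} = \bigcap X\), there is some \(\cal{J} \in X\) with \(a \notin \cal{J}\), so \(\cal{J} \in D(a) \cap X\). Hence \(X\) meets every nonempty open set, and \(\Cl{X} = \lspec{\cal{G}}\).

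We do not use the hypothesis that \(X\) is spectral; it is only needed for \(P\) to land in \(\qcclat{X}\) elsewhere. The main obstacle is the bookkeeping around the trivial prime \(\cal{G}\) and the nonstandard definition of \(D(a)\). If \(\cal{G} \notin X\), the open set \(D(0)\), which is nonempty because it contains \(\cal{G}\), would fail to meet \(X\), and density would break. We would handle this either by noting that \(\cal{G}\) is the unique closed point and lies in the closure of every nonempty closed set \(V(S)\) (so it is in \(\Cl{X}\) whenever \(X \neq \emptyset\)), or by adopting the paper's convention that \(D(a)\) always contains that point. We will verify which convention makes \(\{\cal{G}\}\) non-open before finalising.
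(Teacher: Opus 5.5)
Your two directions are the paper's own arguments. The converse is essentially verbatim. In the forward direction the paper goes through the positivity-detecting property of Lemma \ref{ValGrpsFromDenseSpecSubspaces}: from \(X \subseteq V(a)\) it reads off \(P(-\abs{a}) = \top\), hence \(-\abs{a} \geq 0\). The proof of that lemma is exactly your observation that \(D(a)\) would be a nonempty open set missing \(X\).

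What is not in order is your treatment of the point \(\cal{G} \in \lspec{\cal{G}}\). It cannot be deferred to ``verify before finalising'', because one of the two conventions you float makes the lemma false.

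Since \(a \in \cal{G}\) for every \(a\), the point \(\cal{G}\) lies in every \(V(S)\) and in no \(D(a)\). In particular \(D(0) = \emptyset\). So your claims that \(\cal{G}\) lies in every \(D(a)\), and that \(D(0)\) is a nonempty open set containing it, are wrong.

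Because every \(V(S)\) contains \(\cal{G}\), the empty set is not of the form \(V(S)\). So the whole space \(\lspec{\cal{G}}\) has to be added as an extra quasicompact open set. That is evidently what the garbled ``\(\cup \{\lspec{\cal{G}}\}\)'' in the preliminaries means. It is also why \(\qcclat{\lspec{\cal{G}}}\) consists of the \(V(a)\) together with a separately adjoined \(\emptyset\) (cf.\ \(\epsilon_{\cal{V}}(\emptyset) = \bot\)).

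Consequently:
\begin{itemize}
\item the only open neighbourhood of \(\cal{G}\) is the whole space;
\item \(\cal{G} \in \Cl{X}\) for every nonempty \(X\);
\item \(X\) is dense if and only if \(X \neq \emptyset\) and \(X\) meets every nonempty \(D(a)\).
\end{itemize}
With this in place your argument is complete. \(D(a) \neq \emptyset\) forces \(a \neq 0\), and \(\bigcap X = \{0\}\) forces \(X \neq \emptyset\) once \(\cal{G} \neq \{0\}\). The degenerate case \(\cal{G} = \{0\}\), \(X = \emptyset\) has to be excluded, in the paper's statement too: there \(\bigcap X = \cal{G} = \{0\}\), but \(X\) is not dense.

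Now take the other reading you consider, with \(\cal{G} \in D(a)\) for all \(a\). Then \(D(0) = \{\cal{G}\}\) would be a nonempty open set contained in every nonempty open set. So \(\cal{G}\) would be a generic point, and density would just mean \(\cal{G} \in X\).

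For \(\cal{G} \neq \{0\}\), the one-point subspace \(X = \{\cal{G}\}\) would then be dense with \(\bigcap X = \cal{G}\), so the lemma fails. This reading is also incompatible with the closed sets the paper uses throughout. For instance, \(V(0)\) would no longer be the whole space, so \(V(0 \meet 0) \neq \top\), contradicting positivity affirmation for \(V(- \meet 0)\).

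So the ``either/or'' at the end of your proposal must be resolved in favour of the standard reading, and the sentences asserting \(\cal{G} \in D(a)\) should be removed.
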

        \begin{proof}
            \((\implies)\) Let \(a \in \bigcap X\). By definition, we have for all \(\cal{J} \in X\) that \(a \in \cal{J}\). Thus, \(X \subseteq V(a)\), and so we see that \(X = V(a) \cap X = V((-\abs{a}) \meet 0) \cap X\). Therefore, \(0 \leq -\abs{a}\), and so \(a = 0\). Thus, \(\bigcap X = \{0\}\), as required. \\

            \((\impliedby)\) Let \(a \in \cal{G}\) with \(D(a) \neq \emptyset\). In particular, \(a \neq 0\), and so \(a \notin \bigcap X\). It follows that there is some \(\cal{J} \in X\) such that \(a \notin \cal{J}\). Hence, \(X \cap D(a) \neq \emptyset\), and so \(X\) is dense.
        \end{proof}

        \begin{theorem} [The Topological Representation Theorem]
            Let \(\cal{V} = (\cal{G},\cal{L},P)\) be a valued \(\ell\)-group, and:
            \[P_* : \spectrum{\cal{L}} \to \lspec{\cal{G}} : \cal{F} \mapsto (\nu_{\lspec{\cal{G}}}^{-1} \circ \spectrum{\epsilon_{\cal{V}}})(\cal{F})\]

            with \(X = \image(P_*)\). Then:
            \begin{itemize}
                \item[\((1)\)] \(X\) is a spectral subspace of \(\lspec{\cal{G}}\), and the map \(\Phi : \qcclat{X} \to \cal{L}\) given by \(\Phi = \mu_{\cal{L}}^{-1} \circ \qcclat{P_*}\) is a bounded lattice isomorphism such that \((\id{\cal{G}},\Phi)\) is an isomorphism of valued \(\ell\)-groups.

                \item[\((2)\)] If \(X = \lspec{\cal{G}}\), then \(\Phi= \epsilon_{\cal{V}}\).

                \item[\((3)\)] \(V(- \meet 0) \cap X\) is a dense valuation if and only if \(X\) is dense in \(\lspec{\cal{G}}\).
            \end{itemize}
        \end{theorem}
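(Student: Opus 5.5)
The plan is to run everything through Stone/spectral duality applied to the surjection \(\epsilon_{\cal{V}}\) of Lemma \ref{NatValFuncExtensions}. Since \(\epsilon_{\cal{V}} : \qcclat{\lspec{\cal{G}}} \to \cal{L}\) is a surjective bounded lattice morphism, the dual map \(\spectrum{\epsilon_{\cal{V}}} : \spectrum{\cal{L}} \to \spectrum{\qcclat{\lspec{\cal{G}}}}\) is a spectral embedding, i.e. a homeomorphism onto a spectral subspace. Composing with the homeomorphism \(\nu_{\lspec{\cal{G}}}^{-1}\) shows that \(P_*\) is a spectral embedding. Hence \(X = \image(P_*)\) is a spectral subspace of \(\lspec{\cal{G}}\), and \(P_* : \spectrum{\cal{L}} \to X\) is a homeomorphism of spectral spaces.

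Applying \(\bar{\cal{K}}\) to this homeomorphism gives a lattice isomorphism \(\qcclat{P_*} : \qcclat{X} \to \qcclat{\spectrum{\cal{L}}}\), which acts by pulling back. Composing with \(\mu_{\cal{L}}^{-1}\) gives the isomorphism \(\Phi\). It then remains to check \(\Phi(V(a \meet 0) \cap X) = P(a)\) for \(a \in \cal{G}\), which makes \((\id{\cal{G}},\Phi)\) a valued \(\ell\)-group isomorphism. To do this, unwind the definitions: for a prime filter \(\cal{F}\) of \(\cal{L}\), the point \(P_*(\cal{F})\) lies in \(V(a \meet 0)\) exactly when the closed constructible set \(V(a \meet 0)\) belongs to \(\spectrum{\epsilon_{\cal{V}}}(\cal{F}) = \epsilon_{\cal{V}}^{-1}(\cal{F})\). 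This holds exactly when \(\epsilon_{\cal{V}}(V(a\meet 0)) = P(a) \in \cal{F}\), that is, when \(\cal{F} \in \mu_{\cal{L}}(P(a))\). Thus \(\qcclat{P_*}(V(a \meet 0) \cap X) = \mu_{\cal{L}}(P(a))\), which is the required identity.

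Two points need care. First, every closed constructible subset of \(X\) has the form \(C \cap X\) with \(C \in \qcclat{\lspec{\cal{G}}}\); this holds because \(X\) is a spectral subspace. Second, such sets are of the form \(V(b)\) or \(\emptyset\), so the identity above covers everything. The convention that \(\cal{G} \in \lspec{\cal{G}}\), with \(D(a)\) containing it, needs watching when matching \(\emptyset\) with \(\bot\). I expect this bookkeeping with the explicit natural isomorphisms \(\mu,\nu\) to be the main obstacle, rather than any deep step.

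For (2), suppose \(X = \lspec{\cal{G}}\). Then \(\Phi\) and \(\epsilon_{\cal{V}}\) are both bounded lattice morphisms \(\qcclat{\lspec{\cal{G}}} \to \cal{L}\) with \(\Phi(V(a\meet 0)) = P(a)\), by the computation in (1). The uniqueness clause of Lemma \ref{NatValFuncExtensions} then forces \(\Phi = \epsilon_{\cal{V}}\).

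For (3), the reverse direction is Lemma \ref{ValGrpsFromDenseSpecSubspaces}. For the forward direction, suppose \(V(-\meet 0)\cap X\) is positivity detecting and let \(a \in \bigcap X\). Then \(X \subseteq V(a) = V((-\abs{a}) \meet 0)\), so the valuation sends \(-\abs{a}\) to \(\top\). Positivity detection gives \(-\abs{a} \geq 0\), hence \(a = 0\). So \(\bigcap X = \{0\}\), and Lemma \ref{DenseSubspaceAltDef} yields that \(X\) is dense.
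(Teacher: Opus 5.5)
Your proposal is correct. Part (1) is the paper's argument: duality applied to the surjection \(\epsilon_{\cal{V}}\) makes \(P_*\) a homeomorphism onto the spectral subspace \(X\). Then the same unwinding of \(\nu_{\lspec{\cal{G}}}\) and \(\spectrum{\epsilon_{\cal{V}}}\) gives \(\Phi(V(a \meet 0) \cap X) = P(a)\).

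You differ from the paper in (2) and (3):
\begin{itemize}
\item For (2), the paper redoes a prime-filter chase to show \(\Phi(V(a)) = \epsilon_{\cal{V}}(V(a))\). You instead note that \(\Phi\) is a bounded lattice morphism with \(\Phi \circ V(- \meet 0) = P\), and appeal to the uniqueness clause of Lemma \ref{NatValFuncExtensions}. This is shorter and shows (2) is a formal consequence of (1).
\item For the forward direction of (3), your argument is the sounder one. The paper's written proof passes from \(a \neq 0\) to \(a \notin \bigcap X\). That step presupposes \(\bigcap X = \{0\}\) and never uses positivity detection, so it only re-proves the reverse direction of Lemma \ref{DenseSubspaceAltDef}. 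You supply the missing step: \(a \in \bigcap X\) forces \(V((-\abs{a}) \meet 0) \cap X = \top\), hence \(-\abs{a} \geq 0\) and \(a = 0\). Only then do you invoke that lemma. The same step appears in the paper, but only inside the proof of Lemma \ref{DenseSubspaceAltDef}.
\end{itemize}

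There is one minor slip in your bookkeeping remark. Under the convention \(\cal{G} \in \lspec{\cal{G}}\), the point \(\cal{G}\) lies in every \(V(a)\) and in no \(D(a)\). The extra quasicompact open is the whole space, and its complement \(\emptyset\) is the element matched with \(\bot\). This does not affect your proof.
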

        \begin{proof}
            \((1)\) By Lemma \ref{NatValFuncExtensions}, we have that \(\epsilon_{\cal{V}}\) is a surjective bounded lattice morphism. Hence, by Spectral Duality, \(P_*\) is is a homeomorphism onto its image, and so, \(\Phi\) is a bounded lattice isomorphism. By \cite[Corollary \(2.1.4\)]{TheBible}, \(X\) is a spectral subspace. Now, it remains to check that \((\id{\cal{G}},\Phi)\) is an isomorphism of valued \(\ell\)-groups. In particular, we show that this diagram commutes:
            \begin{center}
                \begin{tikzcd}
                    & \cal{G} \arrow[rd, "P" description] \arrow[ld, "V(- \meet 0) \cap X" description] &         \\
                    \qcclat{X} \arrow[rr, "\Phi" description] &                                                                                   & \cal{L}
                \end{tikzcd}
            \end{center}

            First, let \(l \in \cal{L}\), \(C \in \qcclat{X}\), and notice that:
            \begin{align*}
                \Phi(C) = l \iff & \qcclat{P_*}(C) = V(l) \\
                \iff & (\forall \cal{F} \in \spectrum{\cal{L}})(\cal{F} \in \qcclat{P_*}(C) \iff \cal{F} \in V(l)) \\
                \iff & (\forall \cal{F} \in \spectrum{\cal{L}})(P_*(\cal{F}) \in C \iff l \in \cal{F}).
            \end{align*}

            Hence, let \(a \in \cal{G}\), and see that for \(\cal{F} \in \spectrum{\cal{L}}\):
            \begin{align*}
                P_*(\cal{F}) \in V(a \meet 0) \iff & V(a \meet 0) \in \spectrum{\epsilon_{\cal{V}}}(\cal{F}) \\ 
                \iff & \epsilon_{\cal{V}}(V(a \meet 0)) \in \cal{F} \\
                \iff & P(a) \in \cal{F}.
            \end{align*}

            Thus, \(\Phi(V(a \meet 0)) = P(a)\), and so the above diagram commutes as required. \\
            
            \((2)\) Let \(X = \lspec{\cal{G}}\). It is clear that \(\Phi(\bot) = \epsilon_{\cal{V}}(\bot)\). Then, for \(a \in \cal{G}\) and \(\cal{I} \in \spectrum{\cal{L}}\), we see that:
            \begin{align*}
                \cal{I} \in \qcclat{P_*}(V(a)) \iff & P_*(\cal{I}) \in V(a) \\
                \iff & a \in P_*(\cal{I}) \\
                \iff & V(a) \in \spectrum{\epsilon_{\cal{V}}}(\cal{I}) \\
                \iff & \epsilon_{\cal{V}}(V(a)) \in \cal{I} \\
                \iff & \cal{I} \in V(\epsilon_{\cal{V}}(V(a)).
            \end{align*}

            Hence, it follows that \(\Phi(V(a)) = \mu_{\cal{L}}^{-1}(V(\epsilon_{\cal{V}}(V(a))) = \epsilon_{\cal{V}}(V(a))\), as required. \\

            \((3)\) By Lemma \ref{DenseSubspaceAltDef}, we show that \(V(- \meet 0) \cap X\) is a dense valuation if and only if \(\bigcap X = \{0\}\). We check both implications separately:
            \begin{itemize}
                \item \((\implies)\) Let \(a \in \cal{G}\) with \(D(a) \neq \emptyset\). It is clear that \(a \neq 0\), and so \(a \notin \bigcap X\). Thus, there is some \(\cal{J} \in X\) with \(a \notin \cal{J}\), and so \(X \cap D(a) \neq \emptyset\). In particular, \(X\) is dense. 

                \item \((\impliedby)\) This is Lemma \ref{ValGrpsFromDenseSpecSubspaces}.
            \end{itemize}
        \end{proof}

        One quick consequence of the Topological Representation Theorem is the following alternative characterisation of the scaling invariance axiom. In particular, this brings the notion of a valuation on an \(\ell\)-groups closer in spirit to a valuation on a field.

        \begin{lemma} \label{ValuationAltDef}
            Let \(\cal{G}\) be an \(\ell\)-group, and \(\cal{L}\) a bounded distributive lattice. Let \(P : \cal{G} \to \cal{L}\) be a lattice morphism which is positivity affirming and essentially total. Then, \(P\) is scaling invariant if and only if, for all \(a,b \in \cal{G}\):
            \[P(a) \meeta P(b) \leqa P(a + b) \leqa P(a) \joina P(b).\]
        \end{lemma}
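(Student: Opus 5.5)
The forward implication will go through the Topological Representation Theorem. For the converse I would argue directly in \(\cal{L}\), using only the lattice-morphism property and positivity affirmation.

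\((\implies)\) Suppose \(P\) is scaling invariant, so \(P\) is a valuation. By the Topological Representation Theorem, \((\cal{G},\cal{L},P)\) is isomorphic to \((\cal{G},\qcclat{X},V(-\meet 0)\cap X)\) for some spectral subspace \(X \subseteq \lspec{\cal{G}}\). Since \(\Phi\) is a lattice isomorphism, it suffices to prove the inequalities for \(P(a) = V(a \meet 0)\cap X\). These are statements about individual prime \(\ell\)-ideals \(\cal{J} \in X\), and \(\cal{J} \in V(a\meet 0)\) if and only if \(a + \cal{J} \geq 0\) in the totally ordered quotient \(\quot{\cal{G}}{\cal{J}}\) (this is the computation behind Theorem \ref{NatFuncEmbedOfLGrps}).

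In an \(o\)-group we have two facts. If \(a+\cal{J}\geq 0\) and \(b+\cal{J}\geq 0\), then \(a+b+\cal{J}\geq 0\), which gives \(P(a)\meeta P(b)\leqa P(a+b)\). If \(a+\cal{J}<0\) and \(b+\cal{J}<0\), then \(a+b+\cal{J}<0\), which gives \(P(a+b)\leqa P(a)\joina P(b)\). The only point needing care is that the correspondence \(\cal{J}\in V(a\meet 0)\iff a+\cal{J}\geq0\) really does hold in each quotient.

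\((\impliedby)\) Assume the two inequalities. The key step is that \(P(a) = P(2a)\): taking \(b=a\) gives
\[P(a) = P(a)\meeta P(a)\leqa P(2a)\leqa P(a)\joina P(a)=P(a).\]
Note that this needs no monotonicity of \(P\) beyond the inequalities themselves. An induction on \(n\), applying the inequalities to \(a\) and \((n-1)a\), then gives \(P(na)\) squeezed between \(P(a)\meeta P((n-1)a)\) and \(P(a)\joina P((n-1)a)\). Both bounds equal \(P(a)\) by the inductive hypothesis, so \(P(na)=P(a)\) for all \(n\in\bb{N}\).

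I expect no real obstacle here. The only subtlety is in the forward direction: the reduction through the Topological Representation Theorem requires \(P\) to already be a valuation, which holds precisely because scaling invariance is the hypothesis there.
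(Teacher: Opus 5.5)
Your proof is correct and is essentially the paper's argument. The forward direction is reduced via the Topological Representation Theorem to a pointwise check in the totally ordered quotients \(\cal{G}/\cal{J}\) for \(\cal{J} \in X\), and the converse iterates the two inequalities with \(b = (n-1)a\) to squeeze \(P(na)\) down to \(P(a)\); the only minor difference is that for \(P(a+b) \leqa P(a) \joina P(b)\) you argue contrapositively (two negative elements have negative sum), whereas the paper uses \(a + b \leq 2(a \join b)\) in the quotient together with \(P(2(a \join b)) = P(a \join b)\).
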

        \begin{proof}
            \((\implies)\) By assumption, \(\cal{V} \coloneqq (\cal{G},\cal{L},P)\) is a valued \(\ell\)-group. Hence, by the Topological Representation Theorem, let \(X \subseteq \lspec{\cal{G}}\) be a spectral subspace such that \(\cal{V} \cong (\cal{G},\qcclat{X},V(- \meet 0) \cap X)\). We assume WLOG that this isomorphism is equality. Now, for \(a,b \in \cal{G}\) and \(\cal{J} \in X\), we see that:
            \begin{align*}
                \cal{J} \in V(a \meet 0) \cap V(b \meet 0) \implies & a \meet 0, b \meet 0 \in \cal{J} \\
                \implies & \cal{J} \leq a + \cal{J}, b + \cal{J} \\
                \implies & \cal{J} \leq (a + b) + \cal{J} \\
                \implies & \cal{J} \in V((a + b) \meet 0) \ [\dagger]. 
            \end{align*}

            In particular, \(P(a) \meeta P(b) \leqa P(a + b)\). Now, as \(\quot{\cal{G}}{\cal{J}}\) is totally ordered, then we have that:
            \begin{align*}
                [\dagger] \implies & \cal{J} \leq (a + \cal{J}) + (b + \cal{J}) \\
                \implies & \cal{J} \leq 2 \cdot \max \{a + \cal{J}, b + \cal{J}\} \\
                \implies & \cal{J} \leq(2(a \join b)) + \cal{J} \\
                \implies & \cal{J} \in V((2(a \join b)) \meet 0). 
            \end{align*}

            In particular, \(P(a + b) \leqa P(2(a \join b)) = P(a \join b) = P(a) \joina P(b)\), as required. \\

            \((\impliedby)\) Let \(a \in \cal{G}\). We see immediately that:
            \[P(a) = P\left(\bigmeet_{i = 1}^n a\right) \leqa P(na) \leqa P\left(\bigjoin_{i = 1}^n a\right) = P(a).\]
        \end{proof}

%% file: Chapters/existing-literature.tex
    Existing literature has already covered the notion of a valuation of a lattice-ordered group. In particular, the 1997 paper \cite{ValuationsOfLatticeOrderedGroups} by Conrad, Darnel, and Nelson constructs a notion based on plenary subsets of regular \(\ell\)-ideals. In this section, we will briefly show that densely valued \(\ell\)-groups generalise their notion of valuation. First, we recall the definition of regular ideal and plenary set.

    \begin{define} [Regular Ideal]
        Let \(\cal{G}\) be an \(\ell\)-group, and \(\cal{J} \ideal \cal{G}\). Then, \(\cal{J}\) is \textbf{regular} if any of the following equivalent conditions hold:
        \begin{itemize}
            \item[\((1)\)] There exists some \(g \in \cal{G}\) such that \(\cal{J}\) is maximal amongst \(\ell\)-ideals not containing \(g\).

            \item[\((2)\)] There exists some \(g \in \cal{G}\) such that \(\cal{J} \in D(g)^{\max}\). In particular, regular \(\ell\)-ideals are prime.

            \item[\((3)\)] \(\cal{J}\) is a locally closed point\footnote{There exists \(U,C \subseteq \lspec{\cal{G}}\) open and closed respectively such that \(\{\cal{J}\} = U \cap C\).} in \(\lspec{\cal{G}}\).
        \end{itemize} 
        
        For \(g\) satisfying \((1)\) or \((2)\), we say that \(\cal{J}\) is a \textbf{value} of \(g\).
    \end{define}
    
    \begin{define} [Plenary Set]
        Let \(\cal{G}\) be an \(\ell\)-group, and \(\Pi \subseteq \lspec{\cal{G}}\). Then, \(\Pi\) is \textbf{plenary} if it is an up-set of regular \(\ell\)-ideals with \(\bigcap \Pi = \{0\}\).
    \end{define}

    The key notion used in the Conrad-Darnel-Nelson construction is that of the valuation lattice.

    \begin{define} [Valuation Lattice]
        Let \(\cal{G}\) be an \(\ell\)-group, \(\Pi \subseteq \lspec{\cal{G}}\) a plenary set, and for each \(g \in \cal{G}^+\):
        \[\Pi(g) \coloneqq \{\cal{J} \in \Pi \ | \ \cal{J} \text{ is a value of } g\} = \Pi \cap D(g)^{\max}.\]
        
        Then, the \textbf{valuation lattice} of \(\cal{G}\) is the set \(\Pi(\cal{G}) \coloneqq \{\Pi(g) \ | \ g \in \cal{G}^+\} \cup \{\infty\}\). We note this is a bounded distributive lattice with operations given for \(a,b \in \cal{G}^+\) by:
        \begin{align*}
            \Pi(a) \meeta \Pi(b) = & \Pi(a \meet b)  & \Pi(a) \joina \Pi(b) = & \Pi(a \join b) \\
            \top = & \infty & \bot = & \Pi(0) = \emptyset.
        \end{align*}

        The proof of this is non-trivial (see \cite[Proposition \(2.2\)]{ValuationsOfLatticeOrderedGroups} for a full proof).
    \end{define}

    We recall that, for a poset \(P\), we write \(\op{P}\) for the poset given by reversing the order on \(P\). For a lattice \(\cal{L}\), we remark \(\op{\cal{L}}\) for also a lattice, with meet and join reversed.

    \begin{lemma} \label{ValuationsAsValuedGroups}
        Let \(\cal{G}\) be an \(\ell\)-group, and \(\Pi \subseteq \lspec{\cal{G}}\) a plenary set. Let \(\cal{L} = \op{\Pi(\cal{G})}\) (where we use \(\op{(-)}\) to distinguish between operations in \(\Pi(\cal{G})\) and \(\op{\Pi(\cal{G})}\)), and \(P : \cal{G} \to \cal{L}\) such that \(P(g) = \Pi(g \meet 0)\). Then, \(P\) is a dense valuation on \(\cal{G}\).
    \end{lemma}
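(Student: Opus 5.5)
Throughout, I read \(\Pi(g \meet 0)\) as \(\Pi(\abs{g \meet 0})\), i.e. \(\Pi\) applied to the negative part \(g^- \coloneqq -(g \meet 0) = (-g) \join 0 \in \cal{G}^+\), since \(\Pi(-)\) is only defined on \(\cal{G}^+\). The plan is to check the four conditions of Definition \ref{ValuationDef} directly: lattice morphism, scaling invariance, positivity affirming, essentially total, and then positivity detecting. For each, I translate the condition through the order reversal in \(\op{\Pi(\cal{G})}\) and use the lattice identities for \(\Pi(\cal{G})\) quoted from \cite[Proposition \(2.2\)]{ValuationsOfLatticeOrderedGroups}.

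\textbf{Lattice morphism.} I use the identities \((a \meet b)^- = a^- \join b^-\) and \((a \join b)^- = a^- \meet b^-\), which hold in any \(\ell\)-group by distributivity. Then
\[P(a \meet b) = \Pi(a^- \join b^-) = \Pi(a^-) \joina \Pi(b^-),\]
and a join in \(\Pi(\cal{G})\) is the meet in \(\op{\Pi(\cal{G})}\). The case of \(\join\) is symmetric.

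\textbf{Scaling invariance.} For \(h \in \cal{G}^+\) and \(n \in \bb{N}\), an \(\ell\)-ideal contains \(h\) iff it contains \(nh\), by convexity, since \(0 \leq h \leq nh\). So \(h\) and \(nh\) have the same values, and since \((na)^- = n a^-\) this gives \(P(na) = P(a)\).

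\textbf{Positivity affirming.} If \(a \geq 0\) then \(a^- = 0\) and \(P(a) = \Pi(0) = \emptyset\). This is the bottom of \(\Pi(\cal{G})\), hence the top of \(\op{\Pi(\cal{G})}\).

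\textbf{Essentially total.} The bottom of \(\op{\Pi(\cal{G})}\) is the formal element \(\infty\). So any \(l \neq \bot\) in \(\cal{L}\) is of the form \(\Pi(g)\) with \(g \in \cal{G}^+\), and then \(P(-g) = \Pi(g) = l\). This is exactly why the extra top \(\infty\) is adjoined, and why \(P\) need not hit it.

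\textbf{Positivity detecting.} This is the step I expect to be the main obstacle. Suppose \(P(a) = \top\) in \(\cal{L}\), i.e. \(\Pi(a^-) = \emptyset\), and suppose for contradiction that \(a^- \neq 0\).
\begin{itemize}
\item[\((1)\)] Since \(\bigcap \Pi = \{0\}\), there is some \(\cal{J} \in \Pi\) with \(a^- \notin \cal{J}\).
\item[\((2)\)] By Zorn's lemma (unions of chains of \(\ell\)-ideals omitting \(a^-\) still omit it), \(\cal{J}\) extends to an \(\ell\)-ideal \(\cal{J}'\) maximal among those not containing \(a^-\).
\item[\((3)\)] Then \(\cal{J}'\) is a value of \(a^-\), so it is regular and in particular prime.
\item[\((4)\)] Since \(\Pi\) is an up-set of regular \(\ell\)-ideals and \(\cal{J} \subseteq \cal{J}'\), we get \(\cal{J}' \in \Pi\). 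Hence \(\cal{J}' \in \Pi(a^-)\), contradicting emptiness.
\end{itemize}
Therefore \(a^- = 0\), i.e. \(0 \leq a\). The points needing care are that "up-set" is taken with respect to inclusion within \(\lspec{\cal{G}}\), and that \(\infty\) is genuinely distinct from every \(\Pi(g)\), so that \(P(a) = \top\) forces \(\Pi(a^-) = \emptyset\) rather than some other coincidence.
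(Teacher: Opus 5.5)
Your proof is correct and follows the paper's argument item by item: the lattice-morphism computation via \(\Pi(g) = \Pi(\abs{g})\) and the quoted identities for \(\Pi(\cal{G})\), scaling invariance, positivity affirming, and essential totality via \(P(-g) = \Pi(g)\). The one place you go beyond the paper is positivity detecting, where the paper simply asserts that \(\Pi(a \meet 0) = \emptyset\) gives \(a \meet 0 \in \bigcap \Pi\) ``by definition''. Your step of extending some \(\cal{J} \in \Pi\) omitting \(a^-\) to a value \(\cal{J}'\) of \(a^-\), which lies in \(\Pi\) because \(\Pi\) is an up-set of regular \(\ell\)-ideals, is exactly the justification that step needs, and it is the same argument the paper uses in the subsequent corollary.
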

    \begin{proof}
        \((1)\) We check each property in turn:
        \begin{itemize}
            \item (Lattice Morphism) Let \(a,b \in \cal{G}\). Then, we see that, as \(\Pi(g) = \Pi(\abs{g})\) for all \(g \in \cal{G}\):
            \begin{align*}
                P(a \meet b) = & \Pi((a \meet b) \meet 0) \\
                = & \Pi(\abs{(a \meet b) \meet 0}) \\
                = & \Pi(((-a) \join 0) \join ((-b) \join 0)) \\
                = & \Pi((-a) \join 0) \joina \Pi((-b) \join 0) \\
                = & \Pi(a \meet 0) {\meeta}^{\norm{op}} \Pi(b \meet 0) \\
                = & P(a) {\meeta}^{\norm{op}} P(b).
            \end{align*}

            and similarly for \(\join\). Hence, \(P\) is a lattice morphism.

            \item (Scaling Invariant) Let \(a \in \cal{G}\) and \(n \in \bb{N}\). We notice then that:
            \begin{align*}
                P(a) = & \Pi(a \meet 0) \\
                = & \Pi \cap D(a \meet 0)^{\max} \\
                = & \Pi \cap D(n(a \meet 0))^{\max} \\
                = & \Pi \cap D((na) \meet 0)^{\max} \\
                = & \Pi((na) \meet 0) \\
                = & P(na).
            \end{align*}

            \item (Essentially Surjective) As \(\op{\bot} = \infty\), then let \(a \in \cal{G}^+\). It is clear that \(\Pi(a) = \Pi((-a) \meet 0) = P(-a)\), and so \(\cal{L} \setminus \{\bot\} \subseteq \image(P)\), as required.

            \item (Positivity Affirming and Detecting) First, let \(a \in \cal{G}^+\). It is clear that \(P(a) = \Pi(a \meet 0) = \Pi(0) = \op{\top}\). For the other direction, let \(a \in \cal{G}\) with \(P(a) = \op{\top}\). By definition, we have that \(a \meet 0 \in \bigcap \Pi = \{0\}\), and so \(0 \leq a\).
        \end{itemize}
    \end{proof}

    \begin{corollary}
        Let \(\cal{G}\) be an \(\ell\)-group, and \(\Pi \subseteq \lspec{\cal{G}}\) a plenary set. Let \(\tau\) be the topology on \(\lspec{\cal{G}}\), and \(\patch{\tau}\) the patch topology on \(\lspec{\cal{G}}\), and \(X = \Cls{\Pi}{\patch{\tau}}\). Then, \(X\) is a dense spectral subspace, and the map \(\alpha : \op{\Pi(\cal{G})} \to \qcclat{X}\) given by \(\alpha(\Pi(g)) \mapsto V(g) \cap X\) is a bounded lattice isomorphism, and induces a valued \(\ell\)-group isomorphism:
        \[(\id{\cal{G}},\alpha) : (\cal{G}, \op{\Pi(\cal{G})},\Pi(- \meet 0)) \to (\cal{G},\qcclat{X},V(- \meet 0) \cap X).\]
    \end{corollary}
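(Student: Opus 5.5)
The plan is to combine Lemma \ref{ValuationsAsValuedGroups} with the Topological Representation Theorem, and then identify the resulting subspace with \(X = \Cls{\Pi}{\patch{\tau}}\). By Lemma \ref{ValuationsAsValuedGroups}, \(\cal{V} \coloneqq (\cal{G}, \op{\Pi(\cal{G})}, \Pi(- \meet 0))\) is a densely valued \(\ell\)-group. So the Topological Representation Theorem gives a spectral subspace \(Y = \image(P_*)\) of \(\lspec{\cal{G}}\) and an isomorphism \((\id{\cal{G}},\Phi) : (\cal{G},\qcclat{Y},V(-\meet 0)\cap Y) \to \cal{V}\). It then suffices to prove two things: first, that \(Y = X\); second, that \(\Phi^{-1}\) is the map \(\alpha\) described in the statement.

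\textbf{Identifying \(Y\) with \(X\).} I would avoid chasing \(P_*\) through spectral duality, and instead show directly that \(X\) is spectral with \(\qcclat{X}\) matching \(\op{\Pi(\cal{G})}\).

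First, \(X\) is patch-closed in the spectral space \(\lspec{\cal{G}}\), so it is a spectral subspace (by \cite{TheBible}). Its closed constructible sets are exactly the traces \(V(g) \cap X\) with \(g \in \cal{G}^+\), together with \(\emptyset\).

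Next, for \(a,b \in \cal{G}^+\) I would prove the equivalence
\[V(a)\cap X \subseteq V(b) \cap X \iff \Pi(b) \subseteq \Pi(a) \ \text{(in } \Pi(\cal{G})\text{)}.\]
Since \(V(a)\) and \(V(b)\) are patch-clopen, intersecting with \(X\) or with \(\Pi\) gives the same containment, so the left side becomes \(\Pi \cap V(a) \subseteq V(b)\). For the equivalence with \(\Pi(b) \subseteq \Pi(a)\), I would use that \(\Pi\) is an up-set of regular ideals. Each \(\cal{J} \in \Pi \cap D(b)\) lies below some value of \(b\), and that value also lies in \(\Pi\). Conversely, a value of \(b\) in \(\Pi\) that avoids \(a\) must be contained in a value of \(a\) which already lies in \(\Pi\). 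I would tighten this step with care, possibly by quoting \cite[Proposition \(2.2\)]{ValuationsOfLatticeOrderedGroups}.

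This equivalence shows that \(\alpha\) is well defined and injective, and that it is order-preserving as a map from \(\op{\Pi(\cal{G})}\). Together with \(\alpha(\infty) = \emptyset\) and \(\alpha(\Pi(0)) = X\), it gives a bounded lattice isomorphism \(\alpha\), which is surjective by the description of \(\qcclat{X}\) above.

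\textbf{Density and the valued isomorphism.} Density of \(X\) follows from Lemma \ref{DenseSubspaceAltDef}, since \(\bigcap X \subseteq \bigcap \Pi = \{0\}\). Compatibility of \(\alpha\) with the valuations is the computation \(\alpha(P(g)) = \alpha(\Pi(g\meet 0)) = V(g\meet 0)\cap X\), using that \(\Pi(g) = \Pi(\abs{g})\).

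The main obstacle is the equivalence in the previous paragraph. It is the one place where the regularity of the ideals in \(\Pi\) and its up-set property actually matter, since one must pass between "values of \(g\)" and arbitrary primes not containing \(g\). I would attack it first using the standard fact that every prime not containing \(g\) is contained in a value of \(g\), obtained by Zorn's lemma.
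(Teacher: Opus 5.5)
You are not far from the paper's argument, but there is a genuine gap at the point where you treat the formal top $\infty$; it is the same gap the paper's own proof has.

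\textbf{Where you match the paper.} Setting aside the opening detour through the Topological Representation Theorem, which you announce and then never use, your route is the paper's. You show $X$ is spectral because it is patch-closed, and dense by Lemma \ref{DenseSubspaceAltDef}. You get $\alpha$ well defined and injective on the elements $\Pi(g)$ from three facts: $V(a)\cap D(b)$ is patch-open, every prime avoiding $g$ lies below a value of $g$, and $\Pi$ is an up-set. You bundle these into a single order-equivalence and obtain the lattice isomorphism from an order isomorphism, whereas the paper checks $\joina$ and $\meeta$ directly; this is a harmless variation. One caution: your ``$\Pi(b)\subseteq\Pi(a)$'' must be read as the lattice order of $\Pi(\cal{G})$. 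That order says every value of $b$ in $\Pi$ lies below a value of $a$ in $\Pi$; it is not inclusion of value sets. The lattice order is what your sketch actually proves.

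\textbf{The gap.} It sits in the sentence ``together with $\alpha(\infty)=\emptyset$ and $\alpha(\Pi(0))=X$, it gives a bounded lattice isomorphism''. Your order-equivalence only compares elements of the form $\Pi(a)$ with each other. Injectivity at $\infty$ additionally requires $V(g)\cap X\neq\emptyset$ for every $g\in\cal{G}^+$. This fails as soon as some $g$ lies in no member of $\Pi$: then $V(g)\cap\Pi=\emptyset$, and since $V(g)$ is patch-open, also $V(g)\cap X=\emptyset$. Take $\cal{G}=\bb{R}$ and $\Pi=\{\{0\}\}$, with regular ideals proper as in condition (1) of their definition. Then $X=\{\{0\}\}$ and $\alpha(\Pi(1))=\emptyset=\alpha(\infty)$. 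Indeed $\op{\Pi(\cal{G})}$ is a three-element chain while $\qcclat{X}$ has two elements, so the statement itself fails as written.

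\textbf{What is missing and how to fix it.} The missing point is the improper prime $\cal{G}$, which the paper counts as a point of $\lspec{\cal{G}}$. Apply the Topological Representation Theorem to $(\cal{G},\op{\Pi(\cal{G})},\Pi(-\meet 0))$. It sends the prime filter $\op{\Pi(\cal{G})}\setminus\{\infty\}$ to $\cal{G}$, so it yields $Y=X\cup\{\cal{G}\}$. Hence the first step of your announced plan, $Y=X$, is false in general. The fix is to replace $X$ by $X\cup\{\cal{G}\}$: this set is still patch-closed and dense, and every trace of $V(g)$ on it is non-empty. With that change your argument goes through unchanged. The paper's injectivity step likewise only treats pairs $\Pi(g)\neq\Pi(h)$ and never says where $\infty$ is sent.
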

    \begin{proof}
        First, we show that \(X\) is a dense spectral subspace of \(\lspec{\cal{G}}\). By \cite[Theorem \(2.1.3\)]{TheBible}, \(X\) is a spectral subspace. Then, as \(\bigcap X \subseteq \bigcap \Pi = \{0\}\), then \(X\) is dense by Lemma \ref{DenseSubspaceAltDef}. Now, we show that \(\alpha\) is a well-defined bounded lattice isomorphism. Hence, consider:
        \begin{itemize}
            \item (Well-Defined) Let \(g,h \in \cal{G}\) be such that \(\Pi(g) = \Pi(h)\), and suppose \(\alpha(\Pi(g)) \neq \alpha(\Pi(h))\). In particular (WLOG), let \(\cal{J} \in X\) be such that \(\cal{J} \in V(g) \cap D(h)\). As \(\cal{J} \in X = \Cls{\Pi}{\patch{\tau}}\), then \(V(g) \cap D(h) \cap \Pi \neq \emptyset\). Hence, let \(\cal{I} \in \Pi \cap V(g) \cap D(h)\). Then, there is some \(\cal{I}' \in D(h)^{\max}\) such that \(\cal{I} \subseteq \cal{I}'\). As \(\Pi\) is plenary, then \(\cal{I}' \in \Pi\). Now, we notice that as \(g \in \cal{I}\), then \(g \in \cal{I}'\). In particular, \(\cal{I}' \in \Pi(h)\) and \(\cal{I}' \notin \Pi(g)\) \Lightning. Thus, \(\alpha(\Pi(g)) = \alpha(\Pi(h))\), as required. 

            \item (Lattice Morphism) Let \(g,h \in \cal{G}^+\). We notice immediately that:
            \begin{align*}
                \alpha(\Pi(g) \op{{\joina}} \Pi(h)) = & \alpha(\Pi(g \meet h)) \\
                = & V(g \meet h) \cap X \\
                = & (V(g) \cap X) \cup (V(h) \cap X) \\
                = & \alpha(\Pi(g)) \cup \alpha(\Pi(h))
            \end{align*}

            and similarly for \(\op{{\meeta}}\).

            \item (Injective) Let \(g,h \in \cal{G}^+\) be such that \(\Pi(g) \neq \Pi(h)\). In particular, there is some \(\cal{I} \in \Pi\) such that (WLOG) \(g \in \cal{I}\) and \(h \notin \cal{I}\). Then, as \(\Pi \subseteq X\), we have that \(\alpha(\Pi(g)) = V(g) \cap X \neq V(h) \cap X = \alpha(\Pi(h))\).

            \item (Surjective) Let \(g \in \cal{G}^+\). It is immediate that \(\alpha(\Pi(g)) = V(g) \cap X\), as required. 
        \end{itemize}

        Now, we show that \((\id{\cal{G}},\alpha)\) is a valued \(\ell\)-group isomorphism. However, this follows immediately from the fact that \(V(a \meet 0) \cap X = \Pi(a \meet 0)\), and so we are done. 
    \end{proof}
    
    In particular, every Conrad-Darnel-Nelson valuations is associated to some densely valued \(\ell\)-groups. The following example illustrates that we can't improve this relationship, and hence that dense valuations in our sense strictly generalise Conrad-Darnel-Nelson valuations.

    \begin{example}
        Let \(\cal{G} = \bb{R} \times_{\norm{lex}} \bb{R}\) be the lexicographic product of \(\bb{R}\) with itself, and let \(\cal{J} = \{(a,0) \ | \ a \in \bb{R}\}\) so that \(\lspec{\cal{G}} = \{\{0\},\cal{J},\cal{G}\}\). It is clear that \(X = \{\{0\}\}\) is a spectral subspace of \(\lspec{\cal{G}}\}\), and so by \cite[Corollary \(4.4.6\)]{TheBible} that is dense in \(\lspec{\cal{G}}\}\). However, we notice that \(\cal{J}\) is maximal amongst \(\ell\)-ideals not containing \((0,1)\), and so is regular. Hence, \(X\) isn't plenary. 
    \end{example}

%% file: Chapters/standard-structures.tex
    \subsection{Functional Representations of Densely Valued \(\ell\)-Groups}

        The study of \(\ell\)-groups is intimately related to the study of function spaces of the form \(\Gamma^X\) for \(\Gamma\) a DOAG, as in particular every \(\ell\)-group embeds in one of this form. The following section demonstrates that a similar result holds for densely valued \(\ell\)-groups - namely, we can always embed a densely valued \(\ell\)-group \(\cal{V}\) into a \textbf{standard structure} (Definition \ref{StanStructDef}). 

        \begin{lemma} \label{FuncRepsForValGrps}
            Let \(\cal{V} = (\cal{G},\cal{L},P)\) be a densely valued \(\ell\)-group. Then, there is a DOAG \(\Gamma\) and \(\ell\)-group embedding \(\iota : \cal{G} \to \Gamma^{\spectrum{\cal{L}}}\) such that, for \(a \in \cal{G}\) and \(\cal{F} \in \spectrum{\cal{L}}\), \(\iota(a)(\cal{F}) \geq 0\) if and only if \(P(a) \in \cal{F}\).
        \end{lemma}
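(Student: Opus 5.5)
The plan is to reduce to the Topological Representation Theorem and then run the standard embedding into a power of a DOAG, indexed now by \(\spectrum{\cal{L}}\) rather than by \(X\).

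First, apply the Topological Representation Theorem to \(\cal{V}\). This gives the map \(P_* : \spectrum{\cal{L}} \to \lspec{\cal{G}}\), with image \(X\) a spectral subspace, together with the isomorphism \((\id{\cal{G}},\Phi)\) from \((\cal{G},\qcclat{X},V(- \meet 0) \cap X)\) to \(\cal{V}\). The computation in its proof gives, for every \(a \in \cal{G}\) and \(\cal{F} \in \spectrum{\cal{L}}\), the key equivalence
\[P_*(\cal{F}) \in V(a \meet 0) \iff P(a) \in \cal{F}.\]
Since \(P\) is a dense valuation, part \((3)\) of that theorem, together with Lemma \ref{DenseSubspaceAltDef}, gives \(X\) dense and hence \(\bigcap X = \{0\}\). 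The map \(P_*\) need not be injective onto anything larger than \(X\); it is a homeomorphism onto \(X\) by spectral duality. This is harmless, because we index by \(\spectrum{\cal{L}}\) directly.

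Next, define \(\iota_0 : \cal{G} \to \prod_{\cal{F} \in \spectrum{\cal{L}}} \quot{\cal{G}}{P_*(\cal{F})}\) by \(a \mapsto (a + P_*(\cal{F}))_{\cal{F}}\). This is an \(\ell\)-group morphism because each quotient map is one. It is injective because its kernel is \(\bigcap_{\cal{F}} P_*(\cal{F}) = \bigcap X = \{0\}\). Each factor is an \(o\)-group, since \(P_*(\cal{F})\) is prime.

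We then embed each factor into a common DOAG. Take divisible hulls \(\divis(\quot{\cal{G}}{P_*(\cal{F})})\), which are DOAGs or trivial. Since the theory of DOAGs is complete, a sufficiently saturated DOAG \(\Gamma\), of cardinality at least that of every factor, admits an order embedding of each of these. This is exactly the argument recalled in the preliminaries, and the trivial group embeds too. Composing with \(\iota_0\) gives an \(\ell\)-group embedding \(\iota : \cal{G} \to \Gamma^{\spectrum{\cal{L}}}\), because the componentwise order embeddings of \(o\)-groups preserve \(\meet\) and \(\join\).

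Finally, we verify the positivity condition. For an \(o\)-group embedding \(e\), we have \(e(x) \geq 0\) if and only if \(x \geq 0\). So \(\iota(a)(\cal{F}) \geq 0\) holds if and only if \(a + P_*(\cal{F}) \geq 0\) in the totally ordered quotient. In that quotient, \(x \geq 0\) is equivalent to \(x \meet 0 = 0\), so the condition becomes \((a \meet 0) + P_*(\cal{F}) = 0\), that is, \(a \meet 0 \in P_*(\cal{F})\). This says \(P_*(\cal{F}) \in V(a \meet 0)\), which by the key equivalence holds if and only if \(P(a) \in \cal{F}\).

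The only step needing care is extracting the key equivalence from the Topological Representation Theorem. This is the identity \(\epsilon_{\cal{V}}(V(a \meet 0)) = P(a)\) from Lemma \ref{NatValFuncExtensions}, unwound through \(\spectrum{\epsilon_{\cal{V}}}\) and \(\nu^{-1}\). Otherwise the proof is routine.
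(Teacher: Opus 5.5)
Your proposal is correct and follows essentially the same route as the paper. You invoke the Topological Representation Theorem, embed \(\cal{G}\) into the product of its quotients by the primes in the dense subspace \(X\) (injective because \(\bigcap X = \{0\}\)), pass to divisible hulls, and land in a single DOAG \(\Gamma\) using completeness of the theory of DOAGs. The only difference is presentational: you index directly by \(\spectrum{\cal{L}}\) through \(P_*\) and read off positivity from \(P_*(\cal{F}) \in V(a \meet 0) \iff P(a) \in \cal{F}\), whereas the paper assumes the isomorphism with \((\cal{G},\qcclat{X},V(- \meet 0) \cap X)\) is an equality and reindexes via \(\nu_X\), so your version just makes that identification explicit.
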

        \begin{proof}
            By the Topological Representation Theorem, let \(X \subseteq \lspec{\cal{G}}\) be a dense spectral subspace such that \(\cal{V} \cong (\cal{G},\qcclat{X},V(- \meet 0) \cap X)\). We can assume WLOG that this isomorphism is equality. Now, we define a map:
            \[I_X : \cal{G} \to \prod_{\cal{J} \in X} \quot{\cal{G}}{\cal{J}} : a \mapsto (a + \cal{J})_{\cal{J} \in X}.\]

            Clearly, \(I_X\) is an \(\ell\)-group morphism. Further, we note that for \(a \in \cal{G}\):
            \[I_X(a) = 0 \iff  (\forall \cal{J} \in X)(a \in \cal{J}) \iff a \in \bigcap X \iff a = 0.\]

            Thus, \(\ker(I_X) = \{0\}\), and so \(I_X\) is an \(\ell\)-group embedding. Further, for \(a \in \cal{G}\) and \(\cal{J} \in X\), we see that \(I_X(a)(\cal{J}) = 0\) if and only if \(a + \cal{J} = \cal{J}\) if and only if \(a \in \cal{J}\). Then, as \(\nu_X\) is an isomorphism of spectral spaces, then we have an isomorphism:
            \[\nu_X^* : \prod_{\cal{J} \in X} \quot{\cal{G}}{\cal{J}} \to \prod_{\cal{F} \in \spectrum{\qcclat{X}}} \quot{\cal{G}}{\nu_X^{-1}(\cal{F})} : (a + \cal{J})_{\cal{J} \in X} \mapsto (a + \nu_X^{-1}(\nu_X(\cal{J})))_{\cal{J} \in X}.\]

            We then see that, for \(a \in \cal{G}\) and \(\cal{F} \in \spectrum{\qcclat{X}}\), we have:
            \begin{align*}
                0 \leq (\nu_X^* \circ I_X)(a)(\cal{F}) \iff & \nu_X^{-1}(\cal{F}) \leq a + \nu_X^{-1}(\cal{F}) \\
                \iff & a \meet 0 \in \nu_X^{-1}(\cal{F}) \\
                \iff & \nu_X^{-1}(\cal{F}) \in V(a \meet 0) \cap X \\
                \iff & V(a \meet 0) \cap X \in \cal{F}.
            \end{align*}

            Finally, by taking divisible hulls in each component and we have that \(\cal{G}\) embeds into a product of DOAGs. By \cite[Exercise \(2.8.10\)]{Hodges}, the theory of DOAGs is complete, and so there is a DOAG \(\Gamma\) with \(\quot{\cal{G}}{\cal{J}} \subgrp \Gamma\) for each \(\cal{J} \in X\). Hence, there is a valued \(\ell\)-group embedding \(\iota : \cal{G} \to \Gamma^{\spectrum{\cal{L}}}\) which satisfies \(\iota(a)(\cal{F}) \geq 0 \iff P(a) \in \cal{F}\).
        \end{proof}

        We note that, by identifying \(\cal{G}\) with \(\iota(\cal{G})\), we can always assume that \(\iota\) is the inclusion map.

        As alluded to above, this allows us to define an important class of densely valued \(\ell\)-groups - the standard structures.

        \begin{lemma} [Standard Valuations\label{StanStructDef}]
            Let \(\Gamma\) be a divisible \(o\)-group and \(X\) a non-empty set. Then, the map:
            \[\{- \geq 0\}: \Gamma^X \to \cal{P}(X) : f \mapsto \{f \geq 0\}\]

            where \(\{f \geq 0\} = \{x \in X \ | \ f(x) \geq 0\}\) is a valuation on \(\Gamma^X\), and surjective. We call this the \textbf{standard valuation} on \(\Gamma^X\), and write \(\stan{\Gamma^X}\) for the triple \((\Gamma^X,\cal{P}(X),\{- \geq 0\})\), which we call the \textbf{standard structure} associated to \(\Gamma^X\).
        \end{lemma}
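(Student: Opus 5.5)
We verify the claims directly, since everything is computed pointwise in \(\Gamma^X\); we do not need the Topological Representation Theorem. Recall that \(\Gamma^X\) is an \(\ell\)-group under pointwise operations, and \(\cal{P}(X)\) is a bounded distributive lattice (indeed Boolean) with \(\cup,\cap,X,\emptyset\). We check in order that \(\{-\geq 0\}\) is a lattice morphism, that it is scaling invariant and positivity affirming, that it is surjective (which gives essential totality), and finally that it is positivity detecting.

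\textbf{Lattice morphism.} For \(f,g \in \Gamma^X\) and \(x \in X\), we use that \(\Gamma\) is totally ordered, so \((f \meet g)(x) = \min\{f(x),g(x)\}\). Then \((f\meet g)(x) \geq 0\) if and only if \(f(x)\geq 0\) and \(g(x) \geq 0\). Hence \(\{f \meet g \geq 0\} = \{f\geq 0\}\cap\{g\geq 0\}\), and dually for \(\join\) using \(\max\).

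\textbf{Scaling invariance and positivity affirmation.} Since \(\Gamma\) is a torsion-free \(o\)-group, for \(n \geq 1\) we have \(nf(x) \geq 0\) if and only if \(f(x) \geq 0\). If \(f(x) < 0\), then \(nf(x) < 0\) by repeatedly adding a negative element. This gives \(\{nf\geq 0\}=\{f\geq 0\}\). If \(f \in (\Gamma^X)^+\), then \(f(x)\geq 0\) for all \(x\), so \(\{f\geq 0\} = X = \top\).

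\textbf{Surjectivity.} Given \(S \subseteq X\), we pick some \(\gamma \in \Gamma\) with \(\gamma > 0\), and define \(f_S(x) = 0\) for \(x \in S\) and \(f_S(x) = -\gamma\) otherwise. Then \(\{f_S \geq 0\} = S\). This covers \(S=\emptyset\) (the constant \(-\gamma\)) and \(S = X\) (the zero function), and in particular gives essential totality.

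\textbf{Positivity detection.} If \(\{f\geq 0\} = X\), then \(f(x) \geq 0\) pointwise, which is exactly \(0 \leq f\) in \(\Gamma^X\). So the valuation is in fact dense.

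The only genuine obstacle is the surjectivity step, which needs a nonzero element of \(\Gamma\). If \(\Gamma\) is trivial, then \(\Gamma^X\) is the one-element group and \(\emptyset\) is not attained, so surjectivity fails. We therefore read ``divisible \(o\)-group'' as nontrivial (as DOAGs usually are), and note that without this only essential totality survives. Every other step is a routine pointwise computation.
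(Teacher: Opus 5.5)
Your proof is correct and takes the same route as the paper: the paper also relies on the pointwise \(\min\)/\(\max\) computation from its motivating example, and exhibits an explicit preimage of each \(S \subseteq X\) (taking \(g\) on \(S\) and \(-g\) off \(S\) for \(g \in \Gamma^+\), where, as your choice of \(\gamma > 0\) correctly reflects, \(g\) must be strictly positive, so \(\Gamma\) must be nontrivial). One small correction to your closing aside: for trivial \(\Gamma\), essential totality also fails as soon as \(X\) has at least two elements, since the nonempty proper subsets of \(X\) are then not attained either.
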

        \begin{proof}
            Immediate from the motivation for the definition of a dense valuation, noting that for any \(S \subseteq X\), the map:
            \[f : X \to \Gamma : x \mapsto \begin{cases}
                g & x \in S \\
                -g & x \notin S
            \end{cases}\]

            is such that \(S = \{f \geq 0\}\) for any choice of \(g \in \Gamma^+\).
        \end{proof}

        The importance of standard structures follows from the next result, which tells us that every densely valued \(\ell\)-group can be viewed as a substructure of a standard structure. In particular, this result generalises the classical result for \(\ell\)-groups, by letting \(\cal{V} = \val{G}\).

        \begin{theorem} [The Functional Representation Theorem]
            Let \(\cal{V} = (\cal{G},\cal{L},P)\) be a densely valued \(\ell\)-group. Then, there is a divisible \(o\)-group \(\Gamma\) such that \(\cal{V} \subgrp \stan{\Gamma^{\spectrum{\cal{L}}}}\).
        \end{theorem}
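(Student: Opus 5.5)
The plan is to combine Lemma \ref{FuncRepsForValGrps} with spectral duality for the lattice sort. Lemma \ref{FuncRepsForValGrps} gives a DOAG \(\Gamma\) and an \(\ell\)-group embedding \(\iota : \cal{G} \to \Gamma^{\spectrum{\cal{L}}}\) such that \(\iota(a)(\cal{F}) \geq 0\) if and only if \(P(a) \in \cal{F}\). This handles the group sort. For the lattice sort, we take \(\psi : \cal{L} \to \cal{P}(\spectrum{\cal{L}})\) to be the composite of \(\mu_{\cal{L}}\) with the inclusion \(\qcclat{\spectrum{\cal{L}}} \subseteq \cal{P}(\spectrum{\cal{L}})\). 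Concretely, \(\psi(l) = \{\cal{F} \in \spectrum{\cal{L}} \ | \ l \in \cal{F}\}\), using the same convention for \(V(l)\) as in the proof of the Topological Representation Theorem. The claim is that \((\iota,\psi)\) is a valued \(\ell\)-group embedding \(\cal{V} \to \stan{\Gamma^{\spectrum{\cal{L}}}}\).

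The key steps, in order, are as follows.

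\((1)\) \(\psi\) is an injective bounded lattice morphism. The map \(\mu_{\cal{L}}\) is an isomorphism onto the closed constructible sets, and these form a bounded sublattice of \(\cal{P}(\spectrum{\cal{L}})\) under \(\cap\) and \(\cup\), with \(\top \mapsto \spectrum{\cal{L}}\) and \(\bot \mapsto \emptyset\). This is the prime filter theorem; injectivity amounts to separating distinct elements by prime filters.

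\((2)\) The compatibility square commutes. For \(a \in \cal{G}\),
\[\{\iota(a) \geq 0\} = \{\cal{F} \ | \ \iota(a)(\cal{F}) \geq 0\} = \{\cal{F} \ | \ P(a) \in \cal{F}\} = \psi(P(a)),\]
where the middle equality is exactly the property of \(\iota\) from Lemma \ref{FuncRepsForValGrps}.

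\((3)\) Standard structures are valued \(\ell\)-groups by Lemma \ref{StanStructDef}. Identifying \(\cal{V}\) with its image under \((\iota,\psi)\) therefore exhibits it as a substructure in \(\lvlgrp\). Both sorts are embedded and the function symbol \(P\) is preserved, so this is precisely what \(\cal{V} \subgrp \stan{\Gamma^{\spectrum{\cal{L}}}}\) means.

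I expect the main obstacle to be bookkeeping rather than mathematics. The issue is making sure the convention for \(\spectrum{\cal{L}}\) (prime filters, including or excluding the improper ones) matches the one used in Lemma \ref{FuncRepsForValGrps}, so that ``\(P(a) \in \cal{F}\)'' there really corresponds to membership in \(\psi(P(a))\) here. If Lemma \ref{FuncRepsForValGrps} was proved via \(\nu_X\) and the identification \(\cal{L} \cong \qcclat{X}\), I would compose with that isomorphism explicitly and check the chain of equivalences at the end of that proof. Along the way I would also check that the surjectivity of \(\{- \geq 0\}\) and the essential totality of \(P\) play no role, since a substructure need not be surjective on the lattice sort.
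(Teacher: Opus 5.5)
Your proposal is correct and follows the paper's proof: the paper also takes the embedding from Lemma \ref{FuncRepsForValGrps} and pairs it with a lattice map \(J\) which, since \(J(P(a)) = \{I(a) \geq 0\} = \{\cal{F} \mid P(a) \in \cal{F}\}\) and \(J(\bot) = \emptyset\), is exactly your \(\psi\). The only difference is presentational. The paper defines \(J\) through \(P\), using essential totality to reach every element other than \(\bot\), and then checks well-definedness, injectivity and the lattice operations by hand. You define the map intrinsically via \(\mu_{\cal{L}}\), so these properties come directly from the prime filter theorem.
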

        \begin{proof}
            By Lemma \ref{FuncRepsForValGrps}, we have an \(\ell\)-group embedding \(I : \cal{G} \to \Gamma^{\spectrum{\cal{L}}}\) for some divisible \(o\)-group \(\Gamma\) such that, for all \(a \in \cal{G}\) and \(\cal{F} \in \spectrum{\cal{L}}\), \(I(a)(\cal{F})\geq  0\) if and only if \(P(a) \in \cal{F}\). Now, let \(X = \spectrum{\cal{L}}\), and define a map:
            \[J : \cal{L} \to \{\{f \geq 0\} \ | \ f \in \Gamma^X\} : \begin{cases}
                P(a) \mapsto \{I(a) \geq 0\}\\
                \bot \mapsto \emptyset.
            \end{cases}\]

            We claim that \(J\) is a well-defined bounded lattice embedding. Hence, consider:
            \begin{itemize}
                \item (Well-Defined and Injective) Let \(a,b \in \cal{G}\). We see that:
                \begin{align*}
                    P(a) = P(b) \iff & (\forall \cal{F} \in X)(P(a) \in \cal{F} \iff P(b) \in \cal{F}) \\
                    \iff & (\forall \cal{F} \in X)(I(a)(\cal{F}) \geq 0 \iff I(b)(\cal{F}) \geq 0) \\
                    \iff & J(P(a)) = J(P(b)).
                \end{align*}

                Hence, \(J\) is well-defined and injective.

                \item (Meets and Joins) We see that, for \(a,b \in \cal{G}\):
                \begin{align*}
                    J(P(a) \meeta P(b)) = & J(P(a \meet b)) \\
                    = & \{I(a \meet b) \geq 0\} \\
                    = & \{I(a) \meet I(b) \geq 0\} \\
                    = & \{I(a) \geq 0\} \cap \{I(b) \geq 0\} \\
                    = & J(P(a)) \cap J(P(b))
                \end{align*}

                and \(\joina\) follows similarly.

                \item (Bounds) \(J(\bot) = \emptyset\) by definition. Further, we see that \(J(\top) = J(P(0)) = \{I(0) \geq 0\} = X\). Hence, \(J\) preserves bounds.
            \end{itemize}

            Finally, notice that by definition for \(a \in \cal{G}\), \((\{- \geq 0\} \circ I)(a) = \{I(a) \geq 0\} = (J \circ P)(a)\). Hence, \((I,J)\) is a valued \(\ell\)-group embedding, as required.
        \end{proof}

        \begin{remark}
            For \(\Gamma\) a DOAG, \(X\) a set, and \(f \in \Gamma^X\), we define \(\{f = 0\} \coloneqq \{f \geq 0\} \cap \{(-f) \geq 0\}\), and remark that \(x \in \{f = 0\}\) if and only if \(f(x) = 0\).
        \end{remark}

        The Functional Representation Theorem is an important result for two reasons. First, it allows us to view densely valued \(\ell\)-groups as groups of functions into a DOAG. This is much easier to reason about, and gives us a method for producing future results. 

        The second reason is it affords us access to the Shen-Weispfenning theorem - a seminal result which gives partial quantifier elimination in the case for the group sort of a densely valued \(\ell\)-group is divisible and satisfies a condition called patching, we shall discuss in the next section.

    \subsection{The Shen-Weispfenning Theorem}

        The Shen-Weispfenning theorem, due to \cite{ShenWeispfenning}, gives an effective\footnote{In the sense of computability theory.} procedure for reducing \(\lvlgrp\)-formulae to \(\lbdlat\)-formulae on the lattice sort for a certain class of densely valued \(\ell\)-groups. The original formulation is in the form of what has been termed ``standard structures" in other literature. These differ a priori from our notion of standard structure, and so we shall call them \textbf{Shen-Weispfenning (SW) standard structures}:

        \begin{define} [SW Standard Structure]
            Let \(\cal{G}\) be an \(\ell\)-group, \(\Gamma\) a DOAG, and \(X\) a set with an \(\ell\)-group embedding \(I : \cal{G} \to \Gamma^X\). Then, the \textbf{SW standard structure} of \((\cal{G},I)\) is the triple \((\cal{G},\cal{L}_I,\{- \geq 0\}\), where \(\cal{L}_I = \{\{I(f) \geq 0\} \ | \ f \in \cal{G}\}\). We always consider \((\cal{G},\cal{L}_I,\{- \geq 0\})\) as a structure in the language \(\lvlgrp\), and remark that it is a densely valued \(\ell\)-group. \cite[Def. \(2.4.15\)]{RicardoThesis}.
        \end{define}

        As alluded to, the Shen-Weispfenning requires that our SW standard structure has a property called patching. This is reminiscent of the Tiestz Extension Theorem for real-valued functions - it says that, if two functions \(f\) and \(g\) agree on the overlap of two closed sets \(C\) and \(D\), then there is a function \(h\) which agrees with \(f\) on \(C\) and \(g\) on \(D\).

        \begin{define} [Patching for SW Standard Structures]
            Let \(\cal{G}\) be an \(\ell\)-group, \(\Gamma\) a DOAG, and \(X\) a set with an \(\ell\)-group embedding \(I : \cal{G} \to \Gamma^X\). Then, \((\cal{G},\cal{L}_I,\{- \geq 0\})\) has \textbf{SW patching} if, for all \(f,g,\phi,\psi \in \cal{G}\) with \(\{I(f) \geq 0\} \cap \{I(g) \geq 0\} \subseteq \{\phi = \psi\}\), there exists \(\chi \in \cal{G}\) such that \(\{I(f) \geq 0\} \subseteq \{\chi = \phi\}\) and \(\{I(g) \geq 0\} \subseteq \{\chi = \psi\}\). \cite[Def. \(2.4.19\)]{RicardoThesis}.
        \end{define}

        \begin{theorem} [The Shen-Weispfenning Theorem for SW Standard Structures]
            For every \(\lvlgrp\)-formula \(\phi(\bar{v},\bar{w})\) of sort \(\bb{G}^n \times \bb{L}^m\), there exists:
            \begin{itemize}
                \item An \(\lvlgrp\)-formula \(\chi(\bar{p},\bar{w})\) of sort \(\bb{L}^{k + m}\) for some \(k \in \bb{N}\); and

                \item For each \(i \in \{1,\ldots,k\}\), an \(\lvlgrp\)-term \(t_i(\bar{v}) : \bb{G}^n \to \bb{G}\)
            \end{itemize}

            such that, for all divisible \(\ell\)-groups \(\cal{G}\), DOAGs \(\Gamma\) and sets \(X\) with an \(\ell\)-group embedding \(I : \cal{G} \to \Gamma^X\) such that \((\cal{G},\cal{L}_I, \{- \geq 0\})\) has SW patching, the \(\lvlgrp\)-formula:

            \[\phi_{\norm{reduct}}(\bar{v},\bar{w}) \coloneqq (\exists p_1,\ldots,p_k : \bb{L})(\chi(\bar{p},\bar{w}) \land \bigland_{i = 1}^k (p_i = P(t_i(\bar{v}))))\]

            satisfies \((\cal{G},\cal{L}_I,\{- \geq 0\}) \models (\forall \bar{v} : \bb{G})(\forall \bar{w} : \bb{L})(\phi(\bar{v},\bar{w}) \leftrightarrow \phi_{\norm{reduct}}(\bar{v},\bar{w}))\). 
            
            Further, for \(\phi(\bar{v},w)\) (positive) existential, \(\phi_{\norm{reduct}}\) is also (positive) existential; and both \(\chi(\bar{p},\bar{w})\) and the \(t_i(\bar{v})\)'s can be effectively obtained from \(\phi(\bar{v},\bar{w})\).
        \end{theorem}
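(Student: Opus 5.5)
Since this is the classical Shen--Weispfenning theorem, my plan is to reconstruct its proof by induction on formulas. The reduction is uniform across all SW standard structures with SW patching and divisible \(\cal{G}\). I first reduce to the case \(m = 0\) by treating the lattice variables \(\bar{w}\) as parameters carried along into \(\chi\). Atomic \(\lvlgrp\)-formulas in the group sort, \(s(\bar v) \leq t(\bar v)\), become \(P(t - s) = \top\) by positivity detection. Atomic formulas in the lattice sort are lattice polynomials in \(P(t_i(\bar v))\) and \(\bar w\). So every quantifier-free formula is already of the required shape, with the \(t_i\) the group terms occurring under \(P\) or in comparisons.

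Conjunction, disjunction and negation commute with the reduct form: concatenate the term lists and combine the \(\chi\)'s. Lattice quantifiers pass straight into \(\chi\). The whole induction therefore reduces to eliminating a single existential group quantifier \((\exists u : \bb{G})\, \chi(P(t_1(\bar v,u)),\ldots,P(t_k(\bar v,u)),\bar w)\).

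For this step I use the functional picture. Pointwise in \(X\), each \(I(t_i(\bar v,u))(x)\) is a piecewise-linear function of \(u(x)\) over the DOAG \(\Gamma\). Rewrite each \(t_i\) in meet–join normal form, \(\bigvee_j \bigwedge_l (n_{jl}u - s_{jl}(\bar v))\), using divisibility to scale by positive integers (harmless by scaling invariance). By quantifier elimination for DOAGs, the truth value of each \(t_i \geq 0\) at a point depends only on the position of \(u(x)\) relative to finitely many ``critical'' values \(c_r(\bar v)(x) = s_r(\bar v)(x)/n_r\), which are terms in \(\bar v\) over \(\divis\), hence in \(\cal{G}\). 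A choice of \(u\) is thus locally described by a cell: a region between, or equal to, a critical value. The sets where \(u\) falls in a given cell are lattice elements definable by \(P\) of differences \(u - c_r\).

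The existential then becomes: there exist lattice elements \(q_\sigma\), one per cell configuration, partitioning \(\top\) (using complements relative to the \(P(c_r - c_{r'})\)), consistent with the ordering of the critical values, such that \(\chi\) holds when each \(P(t_i)\) is replaced by the join of those \(q_\sigma\) on which \(t_i \geq 0\) holds in cell \(\sigma\). Each such condition is a lattice formula in new variables and in \(P(c_r - c_{r'})\), i.e.\ of the required shape.

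The main obstacle is the converse direction: given lattice elements \(q_\sigma\) satisfying these conditions, I must build a single \(u \in \cal{G}\) realising them. Locally on each \(q_\sigma\) a witness exists in \(\cal{G}\), for example \(c_r\), \(c_r \pm |c_r - c_{r'}|\), or averages \((c_r+c_{r'})/2\), using divisibility. These local witnesses must be glued, and here SW patching is used: by induction on the number of pieces, two witnesses agreeing on an overlap of \(\{f\geq0\}\) and \(\{g\geq0\}\) glue to a \(\chi\). Since the pieces may overlap only where witnesses disagree, I would first shrink the cells to closed sets of the form \(\{h \geq 0\}\) and choose witnesses that agree on boundaries, e.g.\ by truncating via \(\meet,\join\) with \(c_r\). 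Ensuring the \(q_\sigma\) can be taken to be of the form \(P(h)\) rather than arbitrary lattice elements (the lattice need not be Boolean) is the delicate point. I would handle it by quantifying only over closed lattice elements and encoding open cells through strict inequalities expressed as negations inside \(\chi\).

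Effectiveness follows because every step is a finite syntactic transformation. Preservation of (positive) existential form holds because the only new quantifiers introduced are existential lattice quantifiers, and negation is never introduced when it was absent.
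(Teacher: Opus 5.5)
Your outer reduction matches the paper's sketch (steps \((1)\), \((2)\), \((4)\)): you push a \(\join\)/\(\meet\)-normal form through the lattice morphism \(P\), read \(s \leq t\) as \(P(t-s)=\top\), absorb the connectives and lattice quantifiers into \(\chi\), and normalise coefficients by divisibility. One small repair is needed there. The critical value \(s_r/n_r\) is not an \(\lvlgrp\)-term, so instead substitute \(Nu\) for \(u\) and use scaling invariance; this is harmless because \(u \mapsto Nu\) is a bijection of the divisible torsion-free \(\cal{G}\). Then \(u\) occurs only in atoms \(P(u-a)\), \(P(a-u)\) with \(a\) a genuine term.

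The genuine gap is the elimination of \((\exists u : \bb{G})\), which is the entire content of the theorem. There the cell framework is not merely delicate but false when \(\cal{L}_I\) is not Boolean. A cell such as \(\{u > c_r\}\) is the complement of the lattice element \(P(c_r - u)\), and lattice elements partitioning \(\top\) are necessarily complemented. For a concrete counterexample, let \(\cal{G} = C(\bb{R})\) and \(\Gamma = X = \bb{R}\), so \(\cal{L}_I\) is the lattice of closed subsets of \(\bb{R}\); SW patching holds by the pasting lemma and Tietze. The formula \((\exists u : \bb{G})(P(u) = \top \text{ and } P(-u) = P(v))\) holds for every \(v\): take \(u = (-v) \join 0\). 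Yet your translation asks for \(q_<, q_=, q_>\) partitioning \(\top\) with \(q_< \joina q_= = P(v)\). Then \(P(v)\) would have complement \(q_>\), i.e.\ be clopen, which fails for \(v(x) = x\). The repair via ``negations inside \(\chi\)'' does not help. \(\lnot(q \leqa r)\) asserts that \emph{some} point of \(q\) lies outside \(r\), not a strict inequality \emph{everywhere} on a set. It would also break your claim that positive existential formulas stay positive.

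What is missing is the right bookkeeping plus a key lemma. Quantify over the prescribed values \(y_r = P(u-a_r)\) and \(z_r = P(a_r-u)\) themselves. Split each equation \(y = P(u-a)\) into \(y \leqa P(u-a)\) (\(u \geq a\) on \(y\)) and \(P(u-a) \leqa y\) (\(u < a\) off \(y\)). The reverse inclusion is how strictness on a non-lattice set is expressed positively. One then needs a Fourier--Motzkin lemma. For constraints \(y_i \leqa P(u-a_i)\), \(z_j \leqa P(b_j-u)\), \(P(u-c_k) \leqa w_k\), \(P(d_l-u) \leqa v_l\), the existence of \(u\) is equivalent to the conjunction of \(y_i \meeta z_j \leqa P(b_j-a_i)\), \(y_i \meeta P(a_i-c_k) \leqa w_k\), \(z_j \meeta P(d_l-b_j) \leqa v_l\) and \(P(d_l-c_k) \leqa v_l \joina w_k\). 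This is the role of step \((3)\) in the paper's sketch.

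Necessity is pointwise reasoning in \(\Gamma\). Sufficiency is the hard part, and your local witnesses \(c_r\), \((c_r+c_{r'})/2\), \(c_r \pm \abs{c_r-c_{r'}}\) cannot deliver it, since they are strictly away from \(c_r\) only where two critical values differ. The missing ingredient is that each \(y \in \cal{L}_I\) equals \(\{I(h) \geq 0\}\) for some \(h \in \cal{G}\), so \((-h) \join 0\) is strictly positive exactly off \(y\). For instance, \(u = a + ((-k)\join 0) - ((-h)\join 0)\) realises \(P(u-a) = \{I(h)\geq 0\}\) and \(P(a-u) = \{I(k) \geq 0\}\) whenever these two sets cover \(X\). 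Regional witnesses of this kind are clamped by \(\meet\) and \(\join\) so they stay between the relevant critical values. They are then glued by SW patching along the prescribed lattice elements \(y_r, z_r\); for example, on \(y_r \meeta z_r\) neighbouring witnesses both equal \(a_r\). Without that lemma the proposal does not yet prove the theorem.
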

        
        Due to the proof's complexity, we choose to omit proving Shen-Weispfenning (see \cite[Section \(2.4.3\)]{RicardoThesis} or \cite[Section \(3\)]{MarcusShenWeisPaper} for a full proof). However, due to its importance, we provide a rough sketch, and highlight where our assumptions are used. There are \(4\) major steps to the proof:
        \begin{itemize}
            \item[\((1)\)] We show that \(\phi(\bar{v},\bar{w})\) is equivalent to an \(\lvlgrp\)-formula \(\phi_*(\bar{v},\bar{w})\) containing neither \(\meet\) nor \(\join\). We first do this for when \(\cal{G}\) is totally-ordered, and then in general using the representation of \(\cal{G}\) as a subdirect product of totally-ordered \(\ell\)-groups.

            \item[\((2)\)] We prove Shen-Weispfenning in the case where \(\phi_*(\bar{v},\bar{w})\) contains no quantifiers in \(\cal{G}\), by noting that density gives us \(a \leq b \iff 0 \leq b - a \iff P(b - a) = \top\) for any \(a,b \in \cal{G}\).

            \item[\((3)\)] Using patching, we reduce \(\lvlgrp\)-formula \(\psi(v,w)\) of sort \(\bb{G} \times \bb{L}\) of the form \((\exists a : \bb{G})(y \leqa P(b - x)\) (and permutations thereof) to ones which don't quantify over \(\cal{G}\).

            \item[\((4)\)] By applying these reductions, we can induct on quantifier complexity of \(\phi_*(\bar{v},\bar{w})\) to show the full result. We use the divisibility implicitly in this step, in order to reduce \(\lvlgrp\)-terms of the form \(P(nv - mw)\) to ones of the form \(P(v' - w')\).
        \end{itemize}

        It is clear that Shen-Weispfenning is significant - it computably reduces \(\lvlgrp\)-formulae to formulae over the lattice sort. In order to utilise it, it is sufficient to show that our notion of standard structure coincides with the SW standard structure. 

        \begin{lemma} \label{ValLGrpsAreSWStanStrucs}
            Let \(\cal{V} = (\cal{G},\cal{L},P)\) be a densely valued \(\ell\)-group. Then \(\cal{V}\) is isomorphic to an SW standard structure.
        \end{lemma}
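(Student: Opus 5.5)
The plan is to read this off the Functional Representation Theorem. That theorem gives a divisible \(o\)-group \(\Gamma\), the set \(X = \spectrum{\cal{L}}\), and a valued \(\ell\)-group embedding \((I,J) : \cal{V} \to \stan{\Gamma^{X}}\). Here \(I : \cal{G} \to \Gamma^X\) is an \(\ell\)-group embedding with \(I(a)(\cal{F}) \geq 0\) if and only if \(P(a) \in \cal{F}\), and \(J\) is the bounded lattice embedding determined by \(J(P(a)) = \{I(a) \geq 0\}\) and \(J(\bot) = \emptyset\). The candidate SW standard structure is \((\cal{G},\cal{L}_I,\{- \geq 0\})\) for this pair \((\cal{G},I)\), and the candidate isomorphism is \((\id{\cal{G}},J)\).

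The key steps, in order, are as follows.

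First, I check that \(J\) corestricts to a map \(\cal{L} \to \cal{L}_I\) that is a bounded lattice isomorphism. Injectivity and preservation of \(\meeta\), \(\joina\) and \(\top\) are already established in the proof of the Functional Representation Theorem. On \(\cal{L} \setminus \{\bot\}\), \(J\) maps onto \(\cal{L}_I = \{\{I(f) \geq 0\} \mid f \in \cal{G}\}\), because every non-bottom \(l\) equals some \(P(a)\) by essential totality, and conversely every \(\{I(f) \geq 0\}\) equals \(J(P(f))\).

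Second, commutativity \(\{I(-) \geq 0\} = J \circ P\) holds by the very definition of \(J\).

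The main obstacle is the bottom element. Each \(P(a)\) contains \(\top\)-like information: as noted in the example after Definition \ref{ValuationDef}, \(\{I(f)\geq 0\}\) need not be empty, so \(\emptyset\) may fail to lie in \(\cal{L}_I\) and \(\cal{L}_I\) might not be closed under the bounds. I would handle this as follows. If \(\emptyset = \{I(f) \geq 0\}\) for some \(f\), then \(\bot = P(f)\) lies in the image of \(P\) and there is nothing to do. Otherwise I interpret \(\cal{L}_I\) as the bounded lattice \(\{\{I(f) \geq 0\}\} \cup \{\emptyset\}\), matching the convention implicit in the essential-totality axiom; with that reading, \(J\) is onto. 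I would also verify that \(\bot \neq P(a)\) for all \(a\) exactly when no \(\{I(a) \geq 0\}\) is empty, using injectivity of \(J\), so that the two cases are consistent.

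Finally, \((\id{\cal{G}},J)\) is then a bijective valued \(\ell\)-group morphism whose components are isomorphisms, hence an isomorphism of \(\lvlgrp\)-structures. This yields the lemma, with \(\cal{V}\) realised as the SW standard structure of \((\cal{G},I)\).
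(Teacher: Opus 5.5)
Your proposal is correct and follows the same route as the paper. You apply the Functional Representation Theorem and take \((\id{\cal{G}},J)\) as the isomorphism onto \((\cal{G},\cal{L}_I,\{- \geq 0\})\), with \(\cal{L}_I\) identified with \(\image(J)\). Your explicit case split on whether \(\emptyset = J(\bot)\) already lies in \(\{\{I(f) \geq 0\} \mid f \in \cal{G}\}\) makes precise what the paper does silently by setting \(\cal{L}_I = \image(J)\). That convention is genuinely needed: when \(\bot \notin \image(P)\), no SW structure whose lattice sort is literally the set of \(\{I(f) \geq 0\}\) could be isomorphic to \(\cal{V}\).
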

        \begin{proof}
            By the Functional Representation Theorem, there is a DOAG \(\Gamma\) with a valued \(\ell\)-group embedding \((I,J) : \cal{V} \to \stan{\Gamma^{\spectrum{\cal{L}}}}\). In particular, \(I : \cal{G} \to \Gamma^{\spectrum{\cal{L}}}\) is an \(\ell\)-group embedding of \(\cal{G}\) into \(\Gamma^X\) for some DOAG \(\Gamma\) and set \(X\). Now, let \(\cal{L}_I = \image(J)\). This is clearly a bounded lattice isomorphism, and we see by definition that, for \(f \in \cal{G}\), \(\{I(f) \geq 0\} = J(P(f))\). Thus, it is immediate that \(\cal{V} \cong (\cal{G},\cal{L}_I,\{- \geq 0\})\), and this is precisely an SW standard structure.
        \end{proof}

        \begin{corollary}
            The \(\lvlgrp\)-classes of SW standard structures and densely valued \(\ell\)-groups coincide.
        \end{corollary}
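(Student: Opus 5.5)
To prove the corollary I will show two inclusions of $\lvlgrp$-classes up to isomorphism, using results already in hand.

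\emph{Every densely valued $\ell$-group is an SW standard structure.} This is exactly Lemma \ref{ValLGrpsAreSWStanStrucs}: given $\cal{V} = (\cal{G},\cal{L},P)$, the Functional Representation Theorem gives an embedding $(I,J)$ into $\stan{\Gamma^{\spectrum{\cal{L}}}}$. Then $J$ is a bounded lattice isomorphism onto $\cal{L}_I$, and $(\id{\cal{G}},J)$ is an isomorphism onto $(\cal{G},\cal{L}_I,\{- \geq 0\})$.

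\emph{Every SW standard structure is a densely valued $\ell$-group.} Fix an $\ell$-group embedding $I : \cal{G} \to \Gamma^X$ and define $P(f) = \{I(f) \geq 0\}$. I will check the axioms of Definition \ref{ValuationDef} pointwise, using that $\Gamma$ is totally ordered and $I$ is an $\ell$-group morphism.

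1. \emph{Lattice morphism.} Since $I(f \meet g) = I(f) \meet I(g)$ pointwise, and in $\Gamma$ we have $\min\{p,q\} \geq 0$ iff both are $\geq 0$, we get $P(f \meet g) = P(f) \cap P(g)$. Joins work the same way with $\max$. So $\cal{L}_I$ is a sublattice of $\cal{P}(X)$, hence distributive.

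2. \emph{Bounds.} The top element is $X = P(0)$. For the bottom element I take $\emptyset$, adjoined if necessary. The essential totality axiom only asks that non-$\bot$ elements be hit, so this is harmless.

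3. \emph{Scaling invariance.} In a torsion-free $o$-group, $n\,p \geq 0$ iff $p \geq 0$, so $P(nf) = P(f)$.

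4. \emph{Essential totality.} This holds by definition of $\cal{L}_I$.

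5. \emph{Positivity affirming.} If $f \geq 0$ then $I(f) \geq 0$, so $P(f) = X$.

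6. \emph{Positivity detecting.} If $P(f) = X$, then $I(f) \geq 0$, i.e.\ $I(f \meet 0) = I(f) \meet 0 = 0$. Injectivity of $I$ gives $f \meet 0 = 0$, so $f \geq 0$.

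The only delicate point is the bookkeeping of $\bot$. The SW definition takes $\cal{L}_I$ to be the image of $P$ alone, which never contains $\emptyset$ when $\cal{G}$ embeds as above (the paper already observes that $P(a) \neq \emptyset$ in the analogous natural example). So the lattice must be understood as bounded with $\bot = \emptyset$ adjoined. I will make this convention explicit, matching the $\bot \mapsto \emptyset$ clause in the proof of the Functional Representation Theorem. With that convention both inclusions hold, and the classes coincide.
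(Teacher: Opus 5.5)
Your proposal is correct and follows the paper's route: the paper derives the corollary directly from Lemma \ref{ValLGrpsAreSWStanStrucs}, and for the converse inclusion it relies on the remark in the definition of SW standard structures that \((\cal{G},\cal{L}_I,\{- \geq 0\})\) is densely valued, which you verify axiom by axiom. One side claim of yours is false but harmless under your ``adjoin \(\emptyset\) if necessary'' convention: \(\cal{L}_I\) can contain \(\emptyset\) (in \(\stan{\Gamma^X}\) with \(X \neq \emptyset\), the constant function \(-g\) with \(0 < g\) has \(\{-g \geq 0\} = \emptyset\)), and the natural example avoids \(\emptyset\) only because \(\cal{G} \in \lspec{\cal{G}}\) lies in every \(V(a \meet 0)\).
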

        \begin{proof}
            Immediate from Lemma \ref{ValLGrpsAreSWStanStrucs}.
        \end{proof}

        Thus, by the above observation, we are able to restate Shen-Weispfenning in the context of valued \(\ell\)-groups.

        \begin{define} [Patching\label{PatchingDef}]
            Let \(\cal{V} = (\cal{G},\cal{L},P)\) be a valued \(\ell\)-group. Then, \(\cal{V}\) has \textbf{patching} if, for all \(f,g,\phi,\psi \in \cal{G}\) with \(P(f) \meeta P(g) \leqa P(\phi - \psi) \meeta P(\psi - \phi)\), there exists \(\chi \in \cal{G}\) such that \(P(f) \leqa P(\phi - \chi) \meeta P(\chi - \phi)\) and \(P(g) \leqa P(\psi - \chi) \meeta P(\chi - \psi)\).
        \end{define}

        \begin{theorem} [The Shen-Weispfenning Theorem\label{ShenWeispfenning}]
            Let \(\tvlgrp^+\) be the theory of valued \(\ell\)-groups with divisible group sort and patching. Then, for every \(\lvlgrp\)-formula \(\phi(\bar{v},\bar{w})\) of sort \(\bb{G}^n \times \bb{L}^m\), there exists:
            \begin{itemize}
                \item An \(\lvlgrp\)-formula \(\chi(\bar{p},\bar{w})\) of sort \(\bb{L}^{k + m}\) for some \(k \in \bb{N}\); and

                \item For each \(i \in \{1,\ldots,k\}\), an \(\lvlgrp\)-term \(t_i(\bar{v}) : \bb{G}^n \to \bb{G}\)
            \end{itemize}

            such that the \(\lvlgrp\)-formula:
            \[\phi_{\norm{reduct}}(\bar{v},\bar{x}) \coloneqq (\exists y_1,\ldots,y_k : \bb{L})(\chi(\bar{y},\bar{x}) \land \bigland_{i = 1}^k (y_i = P(t_i(\bar{a}))))\] 
            
            satisfies \(\tvlgrp^+ \entails (\forall \bar{a} : \bb{G})(\forall \bar{x} : \bb{L})(\phi(\bar{a},\bar{x}) \leftrightarrow \phi_{\norm{reduct}}(\bar{a},\bar{x}))\).

            Further, for \(\phi(\bar{v},w)\) (positive) existential, \(\phi_{\norm{reduct}}\) is also (positive) existential; and both \(\chi(\bar{p},\bar{w})\) and the \(t_i(\bar{v})\)'s can be effectively obtained from \(\phi(\bar{v},\bar{w})\).
        \end{theorem}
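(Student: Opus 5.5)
The plan is to reduce to the form of Shen--Weispfenning already available for SW standard structures, passing through the representation theorems of the earlier sections. Given a model \(\cal{V} = (\cal{G},\cal{L},P)\) of \(\tvlgrp^+\), the Topological Representation Theorem lets us replace \(\cal{V}\) by \((\cal{G},\qcclat{X},V(- \meet 0) \cap X)\) for a spectral subspace \(X \subseteq \lspec{\cal{G}}\). Put \(\cal{J} = \bigcap X\), an \(\ell\)-ideal, and let \(\pi : \cal{G} \to \quot{\cal{G}}{\cal{J}}\) be the quotient map.

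\textbf{Step 1: the quotient is densely valued.} Every \(\cal{I} \in X\) contains \(\cal{J}\), so \(P(a)\) depends only on \(\pi(a)\), and \(P\) factors as \(\bar{P} \circ \pi\). Via \(\lspec{\pi}\), the set \(X\) is a spectral subspace of \(\lspec{\quot{\cal{G}}{\cal{J}}}\) with intersection \(\{0\}\). By Lemma \ref{DenseSubspaceAltDef} and Lemma \ref{ValGrpsFromDenseSpecSubspaces}, \(\bar{\cal{V}} = (\quot{\cal{G}}{\cal{J}},\cal{L},\bar{P})\) is therefore densely valued. Divisibility passes to the quotient. Patching also transfers: its hypothesis and conclusion mention only \(P\)-values, and \(\pi\) is surjective, so witnesses \(\chi\) lift.

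\textbf{Step 2: apply the standard-structure form.} By Lemma \ref{ValLGrpsAreSWStanStrucs}, \(\bar{\cal{V}}\) is an SW standard structure with SW patching, so the SW standard-structure theorem yields \(\chi\) and terms \(t_i\), uniformly and effectively in \(\phi\). Since \(P = \bar{P}\circ\pi\) and \(\pi\) commutes with terms, every formula over \(\bar{\cal{V}}\) in which \(\bb{G}\)-variables occur only inside \(P\) pulls back verbatim to \(\cal{V}\). The preservation of (positive) existentiality is inherited from the standard-structure version.

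\textbf{Step 3: the obstacle.} The whole argument hinges on a \emph{transfer lemma}: \(\phi\) must hold in \(\cal{V}\) exactly when its translate holds in \(\bar{\cal{V}}\). Quantifiers over \(\bb{G}\) transfer by surjectivity of \(\pi\). Atomic \(\bb{G}\)-formulas \(s \leq t\) and \(s = t\) do not.

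In the dense case, step \((2)\) of the sketched proof rewrites \(s \leq t\) as \(P(t-s)=\top\). Here that is equivalent only to \(t - s \in \cal{G}^+ + \cal{J}\). So the translation is faithful precisely when \(\cal{J} = \{0\}\), i.e.\ when \(P\) is positivity detecting. I would first try to extract \(\cal{J}=\{0\}\) from patching together with essential totality.

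I expect this to fail in general. Take \(\cal{L} = \{\bot,\top\}\) and \(P \equiv \top\) on a nontrivial divisible \(\cal{G}\). This is a valued \(\ell\)-group, and patching holds trivially with \(\chi = \phi\). Yet \(v_1 \leq v_2\) has no reduct of the required shape, since every \(P(t_i)\) is \(\top\).

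So the honest outcome of the plan is as follows. Steps 1--2 prove the statement for every model in which \(\bigcap X = \{0\}\), that is, for the dense models of \(\tvlgrp^+\), which is the setting in which the theorem is used later. For arbitrary valued \(\ell\)-groups the statement as worded would need the atomic \(\bb{G}\)-relations to be adjoined alongside \(\chi\). I would flag this to the author rather than claim a proof.
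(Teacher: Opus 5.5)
Your diagnosis is correct, and your counterexample is sound. The structure \((\bb{Q},\{\bot,\top\},P\equiv\top)\) satisfies every clause of Definition \ref{ValuationDef}; in particular it is essentially total because \(P(0)=\top\). It is also divisible, and it has patching trivially, since every \(P\)-value is \(\top\) and any element serves as the witness. Because \(\phi_{\norm{reduct}}\) sees \(\bar{v}\) only through the values \(P(t_i(\bar{v}))\), no choice of \(\chi\) and \(t_i\) can express \(v_1 \leq v_2\) in this model, whatever \(\chi\) is allowed to contain.

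The paper's proof is the single line ``a translation of the Shen--Weispfenning theorem for SW standard structures''. That translation runs through Lemma \ref{ValLGrpsAreSWStanStrucs}, which requires a \emph{dense} valuation. It rests on the Functional Representation Theorem, and in any case an SW standard structure is automatically positivity detecting, because \(I\) is an embedding. So the paper's argument actually proves the statement for densely valued \(\ell\)-groups with divisible group sort and patching. That is the \(\tdvlgrp^+\) the paper itself writes in the proof of the existential-closedness characterisation, and it is the only setting in which the theorem is used later. The word ``valued'' in the statement should read ``densely valued''.

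For dense models your Steps 1--2 reduce to exactly the paper's route: \(\bigcap X = \{0\}\), \(\pi\) is the identity, and you apply Lemma \ref{ValLGrpsAreSWStanStrucs} and then the SW standard-structure theorem. Your quotient step is a genuine addition. For an arbitrary model of \(\tvlgrp^+\), it shows that \(\phi_{\norm{reduct}}(\bar{a},\bar{x})\) holds in \(\cal{V}\) if and only if \(\phi(\pi(\bar{a}),\bar{x})\) holds in the dense quotient \((\cal{G}/\bigcap X,\cal{L},\bar{P})\). So the failure is confined exactly to the ideal \(\bigcap X\) on which positivity detection breaks down.

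One small wording slip: when you transfer patching to the quotient, it is the parameters \(f,g,\phi,\psi\) that lift along \(\pi\). The witness is then pushed down by \(\pi\), not lifted.
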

        \begin{proof}
            This is just a translation of the Shen-Weispfenning theorem for SW standard structures.
        \end{proof}

        An important fact about standard structures is that they have patching and a divisible group sort. That \(\Gamma^X\) is divisible for \(\Gamma\) a DOAG is immediate, and the following lemma establishes \(\stan{\Gamma^X}\) also has patching.

        \begin{lemma} \label{StanStructsHavePatching}
            Let \(\Gamma\) be an \(o\)-group, and \(X\) a non-empty set. Then, \(\stan{\Gamma^X}\) has patching.
        \end{lemma}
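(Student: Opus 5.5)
Since \(\stan{\Gamma^X}\) is the triple \((\Gamma^X,\cal{P}(X),\{- \geq 0\})\), the lattice operations on the lattice sort are intersection and union. So the hypothesis and conclusion of patching (Definition \ref{PatchingDef}) translate directly into statements about subsets of \(X\). The key observation is the remark following the Functional Representation Theorem: \(\{\phi - \psi \geq 0\} \cap \{\psi - \phi \geq 0\} = \{\phi - \psi = 0\}\), which is exactly the set of points \(x \in X\) with \(\phi(x) = \psi(x)\). The plan is therefore to rewrite the patching condition in this pointwise form and then build \(\chi\) by hand, using that \(\Gamma^X\) is a full product and so contains \emph{every} function \(X \to \Gamma\).

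Concretely, let \(f,g,\phi,\psi \in \Gamma^X\) satisfy \(\{f \geq 0\} \cap \{g \geq 0\} \subseteq \{\phi = \psi\}\). Write \(F = \{f \geq 0\}\) and \(G = \{g \geq 0\}\). Define \(\chi : X \to \Gamma\) by
\[\chi(x) = \begin{cases} \phi(x) & x \in F \\ \psi(x) & x \notin F. \end{cases}\]
No continuity or definability condition is required, because membership in \(\Gamma^X\) is unrestricted. We then verify the two required conclusions.

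First, \(F \subseteq \{\chi = \phi\}\) holds by construction. Second, for \(x \in G\) there are two cases. If \(x \notin F\), then \(\chi(x) = \psi(x)\) by definition. If \(x \in F\), then \(x \in F \cap G \subseteq \{\phi = \psi\}\), so \(\chi(x) = \phi(x) = \psi(x)\). Hence \(G \subseteq \{\chi = \psi\}\). Translating back, \(\{\chi = \phi\} = P(\phi - \chi) \meeta P(\chi - \phi)\), and similarly for \(\psi\), which gives patching.

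There is no real obstacle in this argument. The only point needing care is the translation between the lattice-sort expressions \(P(\phi-\psi)\meeta P(\psi-\phi)\) and the equaliser sets \(\{\phi = \psi\}\). This uses only that \(\Gamma\) is totally ordered: \(a - b \geq 0\) and \(b - a \geq 0\) together give \(a = b\). In particular, divisibility of \(\Gamma\) is not needed, which is consistent with the statement assuming only that \(\Gamma\) is an \(o\)-group.
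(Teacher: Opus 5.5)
Your proof is correct and is essentially the paper's argument: both build the patching element pointwise as a piecewise function, using that \(\Gamma^X\) contains every function \(X \to \Gamma\). The paper takes \(\phi\) on \(F\), \(\psi\) on \(G\) and \(0\) elsewhere, checking well-definedness on \(F \cap G\), whereas you take \(\psi\) on all of \(X \setminus F\) and handle the overlap in the verification. As a minor aside, the identity \(\{\phi-\psi\geq 0\}\cap\{\psi-\phi\geq 0\}=\{\phi=\psi\}\) needs only antisymmetry of the order, not totality.
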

        \begin{proof}
            Let \(f,g \in \Gamma^X\). It is clear that \(\{f - g \geq 0\} \cap \{g - f \geq 0\} = \{f = g\}\). Hence, for \(C,D \in \cal{L}\) with \(C \cap D \subseteq \{f = g\}\), we define a map:
            \[h : X \to \Gamma : x \mapsto \begin{cases}
                f(x) & x \in C \\
                g(x) & x \in D \\
                0 & \text{otherwise.}
            \end{cases}\]

            As \(f(x) = g(x)\) for all \(x \in C \cap D\), then this is well-defined. Thus, for \(x \in C\), we have that \(f(x) = h(x)\), and for \(x \in D\), \(g(x) = h(x)\). In particular, \(C \subseteq \{f = h\}\) and \(D \subseteq \{g = h\}\), and so \(\stan{\Gamma^X}\) has patching, as required. 
        \end{proof}

    \subsection{Algebraically Closed Densely Valued \(\ell\)-Groups}

        Recall that, for a class of \(\cal{L}\)-structures \(\bb{K}\), an \(\cal{L}\)-structure \(\cal{M} \in \bb{K}\) is algebraically closed in \(\bb{K}\) if, for all \textit{positive} existential \(\cal{L}\)-formulae \(\phi(\bar{v})\) of sort \(S_1 \times \ldots \times S_n\), and \(\cal{N} \in \bb{K}\) with \(\cal{M} \subgrp \cal{N}\), we have that:
        \[\cal{N} \models (\exists \bar{v} : S_1 \times \ldots \times S_n)(\phi(\bar{v})) \implies \cal{M} \models (\exists \bar{v} : S_1 \times \ldots \times S_n)(\phi(\bar{v})).\]
        
        We are now in a position to characterise the algebraically closed densely valued \(\ell\)-groups. In particular, we will make use of the fact that every densely valued \(\ell\)-group embeds into a standard structure, and that these satisfy the conditions of the Shen-Weispfenning theorem.

        \begin{theorem} [The Algebraic Characterisation of Algebraically Closed Densely Valued \(\ell\)-Groups\label{ACValGrpAltDef}]
            Let \(\cal{V} = (\cal{G},\cal{L},P)\) be a densely valued \(\ell\)-group. Then, \(\cal{V}\) is algebraically closed if and only if \(\cal{G}\) is divisible, \(\cal{L}\) is Boolean, and \(\cal{V}\) has patching.
        \end{theorem}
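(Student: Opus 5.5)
The plan is to prove the two directions separately. Both rest on the Functional Representation Theorem, which embeds every densely valued \(\ell\)-group into a standard structure \(\stan{\Gamma^X}\) with \(X = \spectrum{\cal{L}}\). Two preliminary observations are needed:
\begin{itemize}
\item \(\stan{\Gamma^X}\) is itself densely valued. Lemma \ref{StanStructDef} makes it a valuation, and positivity detection is immediate since \(\{f \geq 0\} = X\) means \(f \geq 0\) pointwise.
\item \(\stan{\Gamma^X}\) is a model of \(\tvlgrp^+\). Its group sort is divisible because \(\Gamma\) is, and it has patching by Lemma \ref{StanStructsHavePatching}.
\end{itemize}

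\((\implies)\) Assume \(\cal{V}\) is algebraically closed, and fix an embedding \(\cal{V} \subgrp \stan{\Gamma^X}\). Each of the three properties is witnessed by a positive existential formula with parameters from \(\cal{V}\) that holds in \(\stan{\Gamma^X}\), so it transfers down to \(\cal{V}\).
\begin{itemize}
\item Divisibility: for \(a \in \cal{G}\) and \(n \in \bb{N}\), use \((\exists b : \bb{G})(nb = a)\).
\item Booleanness: for \(l \in \cal{L}\), use \((\exists m : \bb{L})(m \meeta l = \bot \land m \joina l = \top)\). This holds in \(\cal{P}(X)\) by taking the complement.
\item Patching: given \(f,g,\phi,\psi\) satisfying the hypothesis of Definition \ref{PatchingDef}, use \((\exists \chi : \bb{G})(P(f) \leqa P(\phi - \chi) \meeta P(\chi - \phi) \land P(g) \leqa P(\psi - \chi) \meeta P(\chi - \psi))\). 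Here \(\leqa\) is atomic, so the formula is positive existential. The hypothesis is preserved by the embedding, and the conclusion holds in \(\stan{\Gamma^X}\) by Lemma \ref{StanStructsHavePatching}.
\end{itemize}

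\((\impliedby)\) Assume \(\cal{G}\) is divisible, \(\cal{L}\) is Boolean and \(\cal{V}\) has patching. Let \(\cal{V} \subgrp \cal{W}\) be densely valued, and let \(\theta(\bar{a},\bar{l})\) be a positive existential formula with parameters from \(\cal{V}\) that is true in \(\cal{W}\).
\begin{enumerate}
\item Embed \(\cal{W}\) into a standard structure. Positive existential formulas persist upward, so we may assume \(\cal{W} = \stan{\Gamma^X}\).
\item Both \(\cal{V}\) and \(\cal{W}\) are models of \(\tvlgrp^+\). Apply the Shen-Weispfenning Theorem (Theorem \ref{ShenWeispfenning}) to \(\theta\). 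This replaces \(\theta\) in both structures by \((\exists \bar{y} : \bb{L})(\chi(\bar{y},\bar{l}) \land \bigland_i y_i = P(t_i(\bar{a})))\), where \(\chi\) is a positive existential formula in the lattice sort alone.
\item The elements \(P(t_i(\bar{a}))\) lie in \(\cal{L}\) and are computed identically in \(\cal{V}\) and \(\cal{W}\). It therefore suffices to show that a positive existential \(\lbdlat\)-formula with parameters in \(\cal{L}\), true in \(\cal{P}(X)\), is already true in \(\cal{L}\).
\end{enumerate}

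The main obstacle is the Boolean-algebra transfer in step 3. A positive existential lattice formula is a disjunction of systems of lattice equations \(s = t\) in unknowns \(\bar{x}\) with parameters \(\bar{c}\), so fix one solvable system. Since \(\cal{L}\) is Boolean, the parameters \(\bar{c}\) generate a finite Boolean subalgebra of \(\cal{L}\) with atoms \(e_1,\ldots,e_r\). These atoms are nonzero in \(\cal{P}(X)\) because \(\cal{L} \to \cal{P}(X)\) is a bounded lattice embedding, and embeddings of Boolean lattices preserve complements.

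For each \(j\), pick an ultrafilter \(U_j\) of \(\cal{P}(X)\) containing \(e_j\), and let \(h_j : \cal{P}(X) \to \bb{2}\) be the corresponding homomorphism. Every parameter \(c\) lies above \(e_j\) or is disjoint from it, so \(h_j(c)\) depends only on \(j\). Applying \(h_j\) to the solution in \(\cal{P}(X)\) gives a solution \(\bar{\epsilon}^{(j)} \in \bb{2}\) of the system with parameters \(h_j(\bar{c})\).

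Now define \(x_i = \bigjoin\{e_j \mid \epsilon^{(j)}_i = 1\} \in \cal{L}\). This is a solution in \(\cal{L}\), because two elements of \(\cal{L}\) below \(\top = \bigjoin_j e_j\) are equal exactly when they agree atomwise, and evaluating a term atomwise commutes with \(\meeta\), \(\joina\), \(\top\) and \(\bot\). Hence the lattice formula holds in \(\cal{L}\), so \(\theta\) holds in \(\cal{V}\), and \(\cal{V}\) is algebraically closed.
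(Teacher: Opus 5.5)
Your proof is correct and follows the paper's argument: the forward direction transfers the same three positive existential formulas down from \(\stan{\Gamma^{\spectrum{\cal{L}}}}\), and the reverse direction embeds \(\cal{W}\) into a standard structure, applies Shen-Weispfenning, and pulls the resulting lattice-sort formula back to \(\cal{L}\). The only differences are minor. First, you prove the final transfer step (that a Boolean \(\cal{L}\) is algebraically closed among bounded distributive lattices) directly, using the atoms of the finite Boolean subalgebra generated by the parameters, whereas the paper cites \cite[Theorem \(6\)]{AlgClosedAndExtClosedDistroLats} for it. Second, you state explicitly that \(\cal{V}\) itself must satisfy \(\tvlgrp^+\) in order to pass from the reduced formula back to the original one, which the paper uses only implicitly.
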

        \begin{proof}
            \((\implies)\) By the Functional Representation Theorem, let \(\Gamma\) be a divisible \(o\)-group such that \(\cal{V} \subgrp \stan{\Gamma^{\spectrum{\cal{L}}}}\). Now, we define \(\lvlgrp\)-formulae:
            \begin{align*}
                \phi_n(v) \equiv & (\exists a : \bb{G})(na = v) \\
                \psi(w) \equiv & (\exists x : \bb{L})(w \joina x = \top \land w \meeta x = \bot) \\
                \chi(v_1,v_2,w_1,w_2) \equiv & (\exists a : \bb{G})(w_1 \leqa P(a - v_1) \meeta P(v_1 - a) \land w_2 \leqa P(a - v_2) \meeta P(v_2 - a)).
            \end{align*}

            where \(n \in \bb{N}_{> 0}\), \(\phi_n(v)\) has sort \(\bb{G}\), \(\psi(w)\) has sort \(\bb{L}\), and \(\chi(v_1,v_2,w_1,w_2)\) has sort \(\bb{G}^2 \times \bb{L}^2\). We observe that these are all positive existential \(\lvlgrp\)-formulae. Then, it is clear that for all \(a \in \cal{G}\) and \(l \in \cal{L}\), we have:
            \[\stan{\Gamma^X} \models \phi_n(a), \psi(l) \implies \cal{V} \models \phi_n(a), \psi(l).\]

            In particular, \(\cal{G}\) is divisible, and \(\cal{L}\) is Boolean. Finally, if \(a,b \in \cal{G}\) and \(l,k \in \cal{L}\) satisfy \(l \meeta k \leqa P(a - b) \meeta P(b - a)\), then we have:
            \[\stan{\Gamma^X} \models \chi(a,b,l,k) \implies \cal{V} \models \chi(a,b,l,k)\]

            and so \(\cal{V}\) has patching. \\
    
            \((\impliedby)\) Let \(\phi(\bar{v},\bar{w})\) be a positive existential \(\lvlgrp\)-formula of sort \(\bb{G}^n \times \bb{L}^m\), \(\bar{a} \in \cal{G}^n\), and \(\bar{l} \in \cal{L}^m\). Let \(\cal{W} = (\cal{H},\cal{K},Q)\) be a densely valued \(\ell\)-group with \(\cal{V} \subgrp \cal{W}\) and \(\cal{W} \models \phi(\bar{a},\bar{x})\). By the Functional Representation Theorem, there is some divisible \(o\)-group \(\Gamma\) such that \(\cal{W} \subgrp \stan{\Gamma^{\spectrum{\cal{K}}}} \eqqcolon \widehat{\cal{W}}\). In particular, \(\widehat{\cal{W}} \models \phi(\bar{a},\bar{l})\). Hence, by the Shen-Weispfenning theorem, let:
            \begin{itemize}
                \item \(\chi(\bar{p},\bar{w})\) be an \(\lvlgrp\)-formula of sort \(\bb{L}^{k + m}\); and
    
                \item For each \(i \in \{1,\ldots,k\}\), \(t_i(\bar{v}) : \bb{G}^n \to \bb{G}\) an \(\lvlgrp\)-term
            \end{itemize}
    
            such that the \(\lvlgrp\)-formula:
            \[\phi_{\norm{reduct}}(\bar{v},\bar{w}) \equiv (\exists y_1,\ldots,y_k : \bb{L})(\chi(\bar{y},\bar{w}) \land \bigland_{i = 1}^k (y_i = P(t_i(\bar{v}))))\]
    
            is positive existential, and equivalent to \(\phi(\bar{v},\bar{w})\) modulo \(\tvlgrp^+\). In particular, we have that:
            \[(\exists y_1,\ldots,y_k \in \cal{P}(\spectrum{\cal{K}}))(\hat{\cal{W}} \models \chi(\bar{y},\bar{l}) \text{ and } (\forall i \in \{1,\ldots,k\})(y_i = \hat{Q}(t_i^{\hat{\cal{W}}}(\bar{a})))).\]
    
            However, we note that for \(1 \leq i \leq k\), \(y_i = \hat{Q}(t_i^{\hat{\cal{W}}}(\bar{a})) = P(t_i^{\cal{V}}(\bar{a}))\). Hence, as \(\cal{L}\) is Boolean, and \(\chi(\bar{p},\bar{w})\) is a positive existential \(\lvlgrp\)-formula of sort \(\bb{L}^{k + m}\), then it is immediate from \cite[Theorem \(6\)]{AlgClosedAndExtClosedDistroLats} that \(\cal{V} \models \chi(y_1,\ldots,y_k,\bar{l})\). Hence, we have that:
            \[\cal{V} \models \phi_{\norm{reduct}}(\bar{a},\bar{k})) \implies \cal{V} \models \phi(\bar{a},\bar{l})\]
    
            and so \(\cal{V}\) is algebraically closed. 
        \end{proof}

        It is a clear corollary that there is an \(\lvlgrp\)-theory of algebraically closed densely valued \(\ell\)-groups, which we denote by \(\tvlgrpac\). We now document a number of immediate properties of algebraically closed densely valued \(\ell\)-groups, including some topological consequences.

        \begin{lemma} \label{ACValGrpProps}
            Let \(\cal{V} = (\cal{G},\cal{L},P)\) be an algebraically closed densely valued \(\ell\)-group. Then:
            \begin{itemize}
                \item[\((1)\)] \(P\) is surjective, and so in particular \(\cal{G} \not\cong \{0\}\).

                \item[\((2)\)] \(\cal{G}\) is algebraically closed.
                
                \item[\((3)\)] \(X \coloneqq \lspec{\cal{G}}^{\min}\) is quasicompact, and \(X \cong \spectrum{\cal{L}}\).

                \item[\((4)\)] \(\cal{G}\) is \textbf{complemented} - for all \(a \in \cal{G}^+\), there exists \(b \in \cal{G}^+\) such that \(a \meet b = 0\) and \(a \join b\) is a weak order unit.

                \item[\((5)\)] For all \(\nu \in \cal{G}^+\), \(\nu\) is a weak order unit if and only if \(P(-\nu) = \bot\).

                \item[\((6)\)] \(\cal{G}\) is hyper-Archimedean if and only if, for all \(a \in \cal{G}\), \(\minideal{a} = \ppol{a}\). 
            \end{itemize}
        \end{lemma}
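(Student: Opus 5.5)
The plan is to work throughout with the description of $\cal{V}$ given by Theorem \ref{ACValGrpAltDef}: $\cal{G}$ is divisible, $\cal{L}$ is Boolean, and $\cal{V}$ has patching. By the Topological Representation Theorem, I assume WLOG that $\cal{V} = (\cal{G},\qcclat{X},V(- \meet 0) \cap X)$ for a dense spectral subspace $X \subseteq \lspec{\cal{G}}$ with $X \cong \spectrum{\cal{L}}$.

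For $(1)$ and $(2)$ I would use algebraic closedness directly rather than the algebraic characterisation. Embed $\cal{V} \subgrp \stan{\Gamma^{\spectrum{\cal{L}}}}$ by the Functional Representation Theorem, assuming $\top \neq \bot$ in $\cal{L}$.
\begin{itemize}
\item[$(1)$] The positive existential sentence $(\exists a : \bb{G})(P(a) = \bot)$ holds in the standard structure, witnessed by any negative constant function. So some $a \in \cal{G}$ has $P(a) = \bot$. Together with essential totality this makes $P$ surjective, and $a \neq 0$ since $P(0) = \top$.
\item[$(2)$] The splitting condition $(2)$ for $\ell$-groups is a positive existential formula in the parameters $a,b,c$. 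In $\Gamma^X$ it is witnessed by $g = c$ on $\{a \neq 0\}$, $g = 0$ elsewhere, and $h = c - g$. Hence it transfers down to $\cal{V}$, and divisibility is already known.
\end{itemize}

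The core is a complement construction. Given $a \in \cal{G}^+$, Booleanness and $(1)$ give $b \in \cal{G}^+$ with $V(b) \cap X$ the complement of $V(a) \cap X$ in $X$. Then $V(a \meet b) \cap X = X$, so $a \meet b \in \bigcap X = \{0\}$ by Lemma \ref{DenseSubspaceAltDef}. Also $V(a \join b) \cap X = \emptyset$, so $D(a \join b) \supseteq X$ is dense, and $a \join b$ is a weak order unit. This gives $(4)$ immediately.

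For $(3)$, I would show $X = \lspec{\cal{G}}^{\min}$.
\begin{itemize}
\item[$\subseteq$] If $\cal{J} \in X$ and $a \in \cal{J}^+$, the complement $b$ satisfies $a \meet b = 0$ and $b \notin \cal{J}$. This is the standard characterisation of minimal primes.
\item[$\supseteq$] This is the step I expect to be the main obstacle. Let $\cal{M}$ be minimal. Since $X$ is patch closed, it suffices to show that every basic patch neighbourhood $D(a_1)\cap\cdots\cap D(a_n)\cap V(b_1)\cap\cdots\cap V(b_m)$ of $\cal{M}$ meets $X$. Minimality gives $c \notin \cal{M}$ with $c \meet \abs{b_j} = 0$ for all $j$. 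Then $e = c \meet \abs{a_1} \meet\cdots\meet \abs{a_n} \notin \cal{M}$, so $e \neq 0$, and density gives $\cal{J} \in X \cap D(e)$. Since $c \notin \cal{J}$ and $\cal{J}$ is prime, each $b_j \in \cal{J}$, and each $a_i \notin \cal{J}$.
\end{itemize}
Quasicompactness of $\lspec{\cal{G}}^{\min}$ then follows because $X$ is spectral.

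For $(5)$, $P(-\nu) = V(\nu) \cap X$. If this is $\emptyset$, then $D(\nu) \supseteq X$ is dense, so $\nu$ is a weak order unit. Conversely, a weak order unit lies in no minimal prime $\cal{M}$: otherwise minimality gives a non-zero $b$ with $b \meet \nu = 0$. By $(3)$ this means $V(\nu) \cap X = \emptyset$.

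For $(6)$, the forward direction: if $\cal{G}$ is hyper-Archimedean, every prime is minimal. Then the Nullstellensatz, together with the identity $\ppol{a} = \bigcap\{\cal{M} \text{ minimal} \mid a \in \cal{M}\}$, gives $\minideal{a} = \ppol{a}$. Conversely, suppose $\minideal{a} = \ppol{a}$ for all $a$, and let $\cal{J}$ be a proper prime with $a \in \cal{J}^+$. Take the complement $b$ of $a$. Since $a \join b$ is a weak order unit, $\ppol{(a \join b)} = \cal{G}$, so $\minideal{a \join b} = \cal{G}$ and $a \join b$ is a strong order unit. Thus $\cal{J}$ cannot contain both $a$ and $b$, so $b \notin \cal{J}$ while $a \meet b = 0$. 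Hence every proper prime is minimal, and $\cal{G}$ is hyper-Archimedean.
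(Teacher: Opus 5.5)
Your proof is correct. For $(1)$, $(2)$ and $(5)$ it is essentially the paper's argument; the paper transfers $(\exists a : \bb{G})(P(a) = l)$ for every $l$ at once, rather than only $l = \bot$ plus essential totality, and that difference is immaterial. For $(3)$, $(4)$ and $(6)$ you take a genuinely different and more self-contained route.

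The paper proves $(3)$ first. It notes that the Boolean space $\spectrum{\cal{L}}$ has no non-trivial specialisations and uses Corollary 4.4.6 of Dickmann--Schwartz--Tressl to see that the dense spectral subspace contains every minimal prime. It then gets $(4)$ by quoting the Conrad--Martinez theorem (complemented iff $\lspec{\cal{G}}^{\min}$ is quasicompact). It gets $(6)$ by quoting Martinez's criterion ($\minideal{g} = \ppol{g}$ for all $g$ iff $\lspec{\cal{G}}^{\min} \cup \{\cal{G}\}$ is patch-dense), together with a patch-closedness result.

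You instead make the explicit complement construction the central tool: the Boolean complement in $\cal{L}$, lifted to $\cal{G}^+$ via surjectivity of $P$ and $\bigcap X = \{0\}$. It gives $(4)$ directly. Through the elementwise characterisation of minimal primes it gives $X \subseteq \lspec{\cal{G}}^{\min}$. It also gives the reverse direction of $(6)$, since each $a \join b$ is a strong order unit, so no proper prime contains both $a$ and its complement. Your patch-neighbourhood argument for $\lspec{\cal{G}}^{\min} \subseteq X$ is in effect a hands-on proof of the cited corollary in the $\ell$-group setting.

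What each buys: the paper's route is shorter and places the lemma in the known topology of minimal spectra. Yours needs only standard algebraic facts (minimal primes via disjoint elements, polars as intersections of minimal primes, hyper-Archimedean iff every proper prime is minimal, patch-closedness of spectral subspaces). It also shows that a single mechanism drives $(3)$, $(4)$ and $(6)$.

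Your assumption $\top \neq \bot$ is well placed. The one-element structure $(\{0\},\{\top\},P)$ satisfies divisibility, Booleanness and patching, and has no proper extensions. So the conclusion $\cal{G} \not\cong \{0\}$ in $(1)$ genuinely needs $\cal{L}$ non-trivial, a case the paper's proof passes over.
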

        \begin{proof}
            \((1)\) If \(\cal{G} \cong \{0\}\), then we remark that \(\Gamma = \bb{Q}\) is such that \(\cal{V} \subgrp \stan{\Gamma^{\spectrum{\cal{L}}}}\). Else, by the Functional Representation Theorem, let \(\Gamma\) be a divisible \(o\)-group such that \(\cal{V} \subgrp \stan{\Gamma^{\spectrum{\cal{L}}}}\). In either case, as \(\{- \geq 0\}\) is surjective, then for all \(l \in \cal{L}\):
            \[\stan{\Gamma^{\spectrum{\cal{L}}}} \models (\exists a : \bb{G})(P(a) = l) \implies \cal{V} \models (\exists a : \bb{G})(P(a) = l).\]

            \((2)\) As \(\cal{G}\) is divisible, it is enough to show that \(\cal{V} \models \alpha\), where:
            \begin{align*}
                \alpha_1(v_1,v_2,v_3) \equiv & (0 \leq v_3) \land (v_1 \meet v_2 = 0) \\
                \alpha_2(v_1,v_2,v_3,v_4,v_5) \equiv & (0 \leq v_4,v_5) \land (v_4 + v_5 = v_3) \land (v_1 \meet v_5 = v_4 \meet v_2 = v_4 \meet v_5 = 0)) \\
                \alpha \equiv & (\forall a,b,c : \bb{G})(\alpha_1(a,b,c) \rightarrow (\exists f,g : \bb{G})(\alpha_2(a,b,c,f,g))
            \end{align*}

            where \(\alpha_1(v_1,v_2,v_3)\) has sort \(\bb{G}^3\), and \(\alpha_2(v_1,v_2,v_3,v_4,v_5)\) has sort \(\bb{G}^5\). By the Functional Representation Theorem, let \(\Gamma\) be a divisible \(o\)-group such that \(\cal{V} \subgrp \stan{\Gamma^{\spectrum{\cal{L}}}}\). Let \(a,b,c \in \cal{G}\) such that \(0 \leq c\) and \(a \meet b = 0\). We define maps:
            \begin{align*}
                f : & \spectrum{\cal{L}} \to \Gamma : \cal{F} \mapsto \begin{cases}
                    c(\cal{F}) & a(\cal{F}) \neq 0 \\
                    0 & a(\cal{F}) = 0, 
                \end{cases} \\
                g : & \spectrum{\cal{L}} \to \Gamma : \cal{F} \mapsto \begin{cases}
                    0 & a(\cal{F}) \neq 0 \\
                    c(\cal{F}) & a(\cal{F}) = 0.
                \end{cases} 
            \end{align*}

            We claim that \(\stan{\Gamma^{\spectrum{\cal{L}}}} \models \alpha(a,b,c,f,g)\). Clearly, \(0 \leq f,g\), \(f + g = c\), and \(f \meet g = 0\). Further, we see that for \(\cal{F} \in \spectrum{\cal{L}}\):
            \begin{align*}
                (a \meet g)(\cal{F}) = & \min\{a(\cal{F}),g(\cal{F})\} = 0 \\
                (f \meet b)(\cal{F}) = & \min \{f(\cal{F}),b(\cal{F})\} = 0
            \end{align*}

            as \(a(\cal{F}) \neq 0 \implies b(\cal{F}) = 0\). Thus, as \(\cal{V}\) is algebraically closed:
            \[\stan{\Gamma^{\spectrum{\cal{L}}}} \models (\exists f,g : \bb{G})(\alpha_2(a,b,c,f,g)) \implies \cal{V} \models (\exists f,g : \bb{G})(\alpha_2(a,b,c,f,g))\]

            and so \(\cal{V} \models \alpha\), as required. \\
            
            \((3)\) By the Topological Representation Theorem, there is some dense spectral subspace \(Y \subseteq \lspec{\cal{G}}\) such that \(Y \cong \spectrum{\cal{L}}\). As \(\cal{L}\) is Boolean, then \(Y\) is a Boolean space. However, this means \(Y\) has no non-trivial specialisations, and by \cite[Corollary \(4.4.6\)]{TheBible}, \(X \subseteq Y\). Hence, it is clear that \(X = Y\), as required. \\

            \((4)\) Immediate from \((3)\) and \cite[Theorem \(2.2\)]{CompLGrps}. \\

            \((5)\) By \((1)\), \(\cal{V} \cong (\cal{G},\qcclat{X},V(- \meet 0) \cap X)\) where \(X = \lspec{\cal{G}}^{\min}\). Hence, let \(\nu \in \cal{G}\). Unpacking the definition, we see that:
            \begin{align*}
                P(-\nu) = \bot \iff & V((- \nu) \meet 0) \cap X= \emptyset \\
                \iff & V(\nu) \cap X = \emptyset \\
                \iff & X \subseteq D(\nu) \\
                \iff & D(\nu) \text{ is dense in } \lspec{\cal{G}} \\
                \iff & (\forall a \in \cal{G} \text{ non-zero})(D(\nu) \cap D(a) \neq D(0)) \\
                \iff & (\forall a \in \cal{G} \text{ non-zero})(D(\nu \meet a) \neq D(0) \\ 
                \iff & (\forall a \in \cal{G} \text{ non-zero})(0 \neq \nu \meet a) \\
                \iff & (\forall a \in \cal{G})(\nu \meet a = 0 \implies a = 0).
            \end{align*}

            \((6)\) We note that:
            \begin{itemize}
                \item \(\cal{G}\) is hyper-Archimedean if and only if \(\lspec{\cal{G}}^{\min} = \plspec{\cal{G}}\).

                \item By \cite[Thm \(3.1\)]{MartinezLGrps}, \(\minideal{g} = \ppol{g}\) for all \(g \in \cal{G}\) if and only if \(\lspec{\cal{G}}^{\min} \cup \{\cal{G}\}\) is dense in \(\patch{\lspec{\cal{G}}}\)
            \end{itemize}

            By \((1)\) and \cite[Proposition \(4.4.16\)]{TheBible}, \(\lspec{\cal{G}}^{\min} \cup \{\cal{G}\}\) is closed in \(\patch{\lspec{\cal{G}}}\), and so the equivalence is immediate.
        \end{proof}

        \begin{lemma} \label{StanStructsAreAC}
            Let \(\Gamma\) be a DOAG and \(X\) a set. Then, \(\stan{\Gamma^X}\) is algebraically closed.
        \end{lemma}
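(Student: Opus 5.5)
By Theorem \ref{ACValGrpAltDef}, it suffices to show that \(\stan{\Gamma^X}\) is a densely valued \(\ell\)-group in which \(\Gamma^X\) is divisible, \(\cal{P}(X)\) is Boolean, and patching holds. I will check these conditions in turn, and treat the degenerate case \(X = \emptyset\) at the end.

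First I need \(\stan{\Gamma^X}\) to be a densely valued \(\ell\)-group. Lemma \ref{StanStructDef} already says that \(\{- \geq 0\}\) is a surjective valuation on \(\Gamma^X\), so only positivity detection remains. If \(\{f \geq 0\} = X\), then \(f(x) \geq 0\) for every \(x\). Since the order on \(\Gamma^X\) is pointwise, this gives \(0 \leq f\). Divisibility is immediate: for \(f \in \Gamma^X\) and \(n \in \bb{N}_{>0}\), define \(g(x)\) as the unique element of \(\Gamma\) with \(n g(x) = f(x)\). This exists because \(\Gamma\) is a DOAG, and it is unique because \(o\)-groups are torsion-free. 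The lattice \(\cal{P}(X)\) is Boolean, with complement given by set complement, and it is bounded by \(\emptyset\) and \(X\).

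Patching is Lemma \ref{StanStructsHavePatching}, modulo one translation. The definition of patching in Definition \ref{PatchingDef} is phrased in terms of \(P(f) \meeta P(g) \leqa P(\phi - \psi) \meeta P(\psi - \phi)\). In \(\stan{\Gamma^X}\) this reads \(\{f \geq 0\} \cap \{g \geq 0\} \subseteq \{\phi = \psi\}\). The lemma builds \(\chi\) piecewise, equal to \(\phi\) on \(C = \{f\geq 0\}\), to \(\psi\) on \(D = \{g \geq 0\}\), and \(0\) elsewhere. Its conclusion \(C \subseteq \{\chi = \phi\}\) is exactly \(P(f) \leqa P(\phi - \chi) \meeta P(\chi - \phi)\), and the same holds for \(D\). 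Since every subset of \(X\) is of the form \(\{f \geq 0\}\) by surjectivity, the lemma's quantification over \(C, D \in \cal{L}\) covers all the cases needed.

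The only real obstacle is the edge case \(X = \emptyset\). There \(\Gamma^X = \{0\}\) and \(\cal{P}(X)\) is the one-element lattice with \(\top = \bot\). Lemma \ref{StanStructDef} assumes \(X\) is non-empty, so I would either read the statement with that standing hypothesis or check this case directly: the conditions of Theorem \ref{ACValGrpAltDef} all hold trivially, since divisibility, complements and patching are vacuous or satisfied by \(0\). Either way, combining the three verified conditions with the backward implication of Theorem \ref{ACValGrpAltDef} gives that \(\stan{\Gamma^X}\) is algebraically closed.
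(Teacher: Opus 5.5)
Your proposal is correct and follows the paper's proof: you reduce to the backward direction of Theorem \ref{ACValGrpAltDef}, take patching from Lemma \ref{StanStructsHavePatching}, get divisibility pointwise from \(\Gamma\), and note that \(\cal{P}(X)\) is Boolean. Your extra checks fill in details the paper leaves implicit without changing the argument: that the standard valuation is positivity detecting, and the degenerate case \(X = \emptyset\), which Lemmas \ref{StanStructDef} and \ref{StanStructsHavePatching} exclude.
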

        \begin{proof}
            By Lemma \ref{StanStructsHavePatching}, we have that \(\stan{\Gamma^X}\) has patching. Then, \(\Gamma^X\) is divisible as \(\Gamma\) is, and it is clear that \(\cal{P}(X)\) is Boolean. Thus, by the Algebraic Characterisation of Algebraically Closed Densely Valued \(\ell\)-Groups, \(\stan{\Gamma^X}\) is algebraically closed.
        \end{proof}

        Using the first of these Lemmas, we can produce a surprising fact about the relation between algebraically closed densely valued \(\ell\)-groups, and existentially closed \(\ell\)-groups.

        \begin{corollary}
            Let \(\cal{V} = (\cal{G},\cal{L},P)\) be an algebraically closed densely valued \(\ell\)-group. Then, \(\cal{G}\) isn't existentially closed.
        \end{corollary}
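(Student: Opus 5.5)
The plan is to show that \(\cal{G}\) violates condition \((4)\) in the characterisation of existentially closed \(\ell\)-groups recalled in the preliminaries. Condition \((4)\) says that no positive element can be a weak order unit when \(\cal{G}\) is non-trivial. So it suffices to produce a non-zero weak order unit in \(\cal{G}\). The tools are Lemma \ref{ACValGrpProps}\((1)\) (surjectivity of \(P\), and \(\cal{G} \not\cong \{0\}\)) and Lemma \ref{ACValGrpProps}\((5)\) (weak order units are exactly those \(\nu \in \cal{G}^+\) with \(P(-\nu) = \bot\)). Here I use the characterisation of existentially closed \(\ell\)-groups as an equivalence, i.e.\ that every existentially closed \(\ell\)-group satisfies \((1)\)--\((5)\), which is how it is stated in the cited sources.

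First, I check that \(\cal{L}\) is non-trivial. By Lemma \ref{ACValGrpProps}\((1)\) there is some non-zero \(g \in \cal{G}\), and then \(-\abs{g}\) is strictly negative. If \(\top = \bot\), then \(P(-\abs{g}) = \top\), and positivity detection would force \(0 \leq -\abs{g}\), a contradiction. Hence \(\top \neq \bot\).

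Next, since \(P\) is surjective by Lemma \ref{ACValGrpProps}\((1)\), pick \(a \in \cal{G}\) with \(P(a) = \bot\), and set \(\nu = -(a \meet 0) \in \cal{G}^+\). Because \(P\) is a lattice morphism and is positivity affirming,
\[P(-\nu) = P(a \meet 0) = P(a) \meeta P(0) = \bot \meeta \top = \bot.\]
So \(\nu\) is a weak order unit by Lemma \ref{ACValGrpProps}\((5)\). Moreover \(\nu \neq 0\): otherwise \(P(-\nu) = P(0) = \top \neq \bot\).

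Finally, suppose for contradiction that \(\cal{G}\) is existentially closed. Condition \((4)\) applied to \(\nu\) gives a non-zero \(h \in \cal{G}^+\) with \(\nu \meet h = 0\). This contradicts \(\nu\) being a weak order unit, so \(\cal{G}\) is not existentially closed.

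The only delicate point is ensuring non-triviality: that \(\top \neq \bot\), so the element \(\nu\) is genuinely a weak order unit and condition \((4)\) applies non-vacuously. This is handled by combining \((1)\) with positivity detection as above. Everything else is a direct computation.
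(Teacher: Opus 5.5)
Your proof is correct and follows the paper's argument: exhibit a weak order unit in \(\cal{G}\) via Lemma \ref{ACValGrpProps}, then contradict condition \((4)\) of the characterisation of existentially closed \(\ell\)-groups. You obtain \(\nu\) from surjectivity of \(P\) together with item \((5)\), where the paper cites items \((4)\) and \((5)\), and your check that \(\top \neq \bot\) is harmless but unnecessary, since a weak order unit \(\nu\) together with a non-zero \(h \in \cal{G}^+\) satisfying \(\nu \meet h = 0\) is already a contradiction whether or not \(\nu = 0\).
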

        \begin{proof}
            By Lemma \ref{ACValGrpProps} \((4)\) and \((5)\), \(\cal{G}\) has a weak order unit \(\nu\). However, by \cite[Section 2.1]{ConstructionOfExtClosedLGrpsViaFraisseLimits}, if \(\cal{G}\) is existentially closed, then there exists some \(a \in \cal{G}^+\) with \(a \neq 0\) and \(a \meet \nu = 0\). \Lightning. Thus, \(\cal{G}\) isn't existentially closed.
        \end{proof}

        This is of greater significance in view of the next section, as it tells us that every existentially closed densely valued \(\ell\)-group isn't existentially closed in its group sort. 

    \subsection{Existentially Closed Densely Valued \(\ell\)-Groups}

        In the previous section, we used the fact the lattice of a densely valued \(\ell\)-group was algebraically closed to show that the whole densely valued \(\ell\)-group was algebraically closed. We would like to mirror this technique when considering existentially closed densely valued \(\ell\)-groups.
        
        However, by \cite[Theorem \(8\)]{AlgClosedAndExtClosedDistroLats}, these are the atomless Boolean algebras. Clearly, \(\stan{\Gamma^X}\) has an atomic Boolean algebra as its lattice sort, and so we can't do this directly as we did above. Instead, we need to make use of the following lemma.

        \begin{lemma} \label{StanValExtensions}
            Let \(\Gamma\) be a divisible \(o\)-group, and \(X\) a non-empty set. Then, there is a valued \(\ell\)-group embedding \((I,J) : \stan{\Gamma^X} \to \stan{\Gamma^{X \times \{0,1\}}}\) given for \(f \in \Gamma^X\) by:
            \[I(f)((x,n)) = f(x) \text{ and } J(\{f \geq 0\}) = \{f \geq 0\} \times \{0,1\}.\]
        \end{lemma}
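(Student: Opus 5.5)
We prove the lemma by direct verification in the standard structures, treating \(I\) and \(J\) separately and then checking that the square commutes. Throughout, write \(\pi : X \times \{0,1\} \to X\) for the first projection. Then \(I(f) = f \circ \pi\), and the proposed \(J\) is the preimage map \(S \mapsto \pi^{-1}(S) = S \times \{0,1\}\) on \(\cal{P}(X)\).

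The first step is to check that \(I\) is an \(\ell\)-group embedding. Precomposition with any map is an \(\ell\)-group morphism \(\Gamma^X \to \Gamma^{X \times \{0,1\}}\), because all operations in both groups are pointwise. Since \(\pi\) is surjective, \(f \circ \pi = 0\) forces \(f = 0\), so \(I\) is injective.

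The second step is to handle \(J\). By the Standard Valuations lemma, \(\{- \geq 0\}\) on \(\Gamma^X\) is surjective onto \(\cal{P}(X)\), so every \(S \subseteq X\) has the form \(\{f \geq 0\}\). Hence \(J\) is defined on all of \(\cal{P}(X)\). It is well defined: the value \(\{f \geq 0\} \times \{0,1\}\) depends only on the set \(\{f \geq 0\}\), not on the choice of \(f\). So \(J\) coincides with \(S \mapsto S \times \{0,1\}\). Preimage maps preserve arbitrary unions, intersections and complements, so \(J\) is a bounded lattice morphism, indeed a Boolean one. It is injective because \(\pi\) is surjective: if \(S \neq S'\), pick \(x\) in the symmetric difference, and then \((x,0)\) distinguishes \(S \times \{0,1\}\) from \(S' \times \{0,1\}\).

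The final step is commutativity, \(\{I(f) \geq 0\} = J(\{f \geq 0\})\). This holds because \((x,n) \in \{I(f) \geq 0\}\) if and only if \(f(x) \geq 0\), if and only if \((x,n) \in \{f \geq 0\} \times \{0,1\}\). So \((I,J)\) is a valued \(\ell\)-group embedding.

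The only point needing any care is that \(J\) is defined a priori on elements presented as \(\{f \geq 0\}\). Well-definedness and totality of \(J\) therefore rest on identifying it with the preimage map and invoking surjectivity of the standard valuation. Beyond that, the argument is routine.
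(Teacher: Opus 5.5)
Your proposal is correct and follows the same approach as the paper, which just calls the lemma immediate as ``the obvious extension of the diagonal embedding'' \(f \mapsto f \circ \pi\). You carry out that verification explicitly: the operations are pointwise, injectivity follows from surjectivity of \(\pi\), \(J\) is the preimage map \(S \mapsto S \times \{0,1\}\), and the square commutes.
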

        \begin{proof}
            Immediate, as this is just the obvious extension of the diagonal embedding of \(\Gamma^X\) into \(\Gamma^{X \times \{0,1\}}\). 
        \end{proof}

        Now, we are in a position to characterise the existentially closed densely valued \(\ell\)-groups.

        \begin{theorem} [The Algebraic Characterisation of Existentially Closed Densely Valued \(\ell\)-Groups\label{ECValGrpAltDef}]
            Let \(\cal{V} = (\cal{G},\cal{L},P)\) be an algebraically closed densely valued \(\ell\)-group. Then, \(\cal{V}\) is existentially closed if and only if \(\cal{L}\) is \textbf{atomless} - for all \(l \in \cal{L}\) with \(\bot < l\), there exists some \(k \in \cal{L}\) with \(\bot < k < l\).
        \end{theorem}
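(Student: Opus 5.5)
The plan has two directions, and both reduce to the lattice sort through the Shen--Weispfenning theorem (Theorem \ref{ShenWeispfenning}).

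\((\implies)\) Suppose \(\cal{V}\) is existentially closed and \(l \in \cal{L}\) satisfies \(\bot < l\). The first step is to build an extension of \(\cal{V}\) in which \(l\) is split. By the Functional Representation Theorem, \(\cal{V} \subgrp \stan{\Gamma^{X}}\) with \(X = \spectrum{\cal{L}}\). Composing with the embedding of Lemma \ref{StanValExtensions} gives \(\cal{V} \subgrp \stan{\Gamma^{X \times \{0,1\}}}\), and in this extension the image of \(l\) is \(S \times \{0,1\}\) for some non-empty \(S\). Now \(S \times \{0\}\) lies strictly between \(\emptyset\) and \(S \times \{0,1\}\). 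So the existential sentence \((\exists k : \bb{L})(k \leqa l \land k \neq l \land k \neq \bot)\) holds in the extension, and existential closedness transfers it down to \(\cal{V}\). Hence \(\cal{L}\) is atomless.

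\((\impliedby)\) Suppose \(\cal{L}\) is atomless and \(\cal{V}\) is algebraically closed. By Theorem \ref{ACValGrpAltDef}, \(\cal{V}\) is divisible, has patching, and has Boolean \(\cal{L}\); in other words \(\cal{V} \models \tvlgrp^+\), and \(\cal{L}\) is an atomless Boolean algebra. Take an extension \(\cal{V} \subgrp \cal{W}\) with \(\cal{W}\) densely valued, an existential formula \(\phi(\bar{v},\bar{w})\), and parameters \(\bar{a},\bar{l}\) from \(\cal{V}\) with \(\cal{W} \models \phi(\bar{a},\bar{l})\).
\begin{itemize}
\item[\((1)\)] Embed \(\cal{W}\) into \(\widehat{\cal{W}} = \stan{\Gamma^{\spectrum{\cal{K}}}}\). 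Existential formulas go up, so \(\widehat{\cal{W}} \models \phi(\bar{a},\bar{l})\).
\item[\((2)\)] Apply Shen--Weispfenning in \(\widehat{\cal{W}}\), which models \(\tvlgrp^+\) by Lemma \ref{StanStructsAreAC}. This gives an existential lattice formula \(\chi\) and terms \(t_i\) such that \(\cal{P}(\spectrum{\cal{K}}) \models \chi(\hat{Q}(t_i(\bar{a}))_i,\bar{l})\).
\item[\((3)\)] Each \(\hat{Q}(t_i(\bar{a}))\) is the image of \(P(t_i(\bar{a})) \in \cal{L}\), so this is an existential statement about parameters of \(\cal{L}\), true in a bounded distributive lattice extension of \(\cal{L}\).
\item[\((4)\)] Since \(\cal{L}\) is an atomless Boolean algebra, it is existentially closed among bounded distributive lattices by \cite[Theorem \(8\)]{AlgClosedAndExtClosedDistroLats}. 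Hence \(\cal{L} \models \chi(P(t_i(\bar{a}))_i,\bar{l})\), that is, \(\cal{V} \models \phi_{\norm{reduct}}(\bar{a},\bar{l})\).
\item[\((5)\)] Since \(\cal{V} \models \tvlgrp^+\), the equivalence in Shen--Weispfenning gives \(\cal{V} \models \phi(\bar{a},\bar{l})\).
\end{itemize}

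The main obstacle is step \((3)\). We need the lattice extension \(\cal{L} \hookrightarrow \cal{P}(\spectrum{\cal{K}})\) to really be an embedding of bounded distributive lattices that sends \(P(t_i^{\cal{V}}(\bar{a}))\) to \(\hat{Q}(t_i^{\widehat{\cal{W}}}(\bar{a}))\). This follows because the valued embeddings commute with the valuations and the terms are evaluated compatibly. We also need \(\chi\) to be existential, possibly with negations, which is fine since \(\phi\) is existential and the theorem preserves existentiality. A secondary point is the \(\neq\) conjuncts in the \((\implies)\) direction: existential closedness allows negated atomic formulas, so this is harmless.
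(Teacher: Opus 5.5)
Your proposal is correct and follows the paper's proof essentially step for step. The backward direction is the same reduction: the Functional Representation Theorem, then Shen--Weispfenning applied in \(\stan{\Gamma^{\spectrum{\cal{K}}}}\), then existential closedness of atomless Boolean algebras among bounded distributive lattices. In the forward direction the paper splits into cases according to whether the image of \(l\) is an atom of \(\cal{P}(\spectrum{\cal{L}})\); you instead always pass to \(\stan{\Gamma^{X \times \{0,1\}}}\) via Lemma \ref{StanValExtensions}, which is a harmless simplification.
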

        \begin{proof}
            \((\implies)\) First, we notice that by Lemma \ref{ACValGrpProps}, \(\cal{G} \not\cong \{0\}\). Now, by the Functional Representation Theorem, let \(\Gamma\) be a divisible \(o\)-group such that \(\cal{V} \subgrp \stan{\Gamma^{\spectrum{\cal{L}}}}\). Let \(l \in \cal{L}\) with \(l > \bot\). We have two cases:
            \begin{itemize}
                \item If \(l\) isn't an atom in \(\cal{P}(\spectrum{\cal{L}})\), then we have that:
                \[\stan{\Gamma^{\spectrum{\cal{L}}}} \models (\exists y : \bb{L})(\bot < y < l) \implies \cal{V} \models (\exists y : \bb{L})(\bot < y < l).\]

                Thus, \(l\) isn't an atom.

                \item Else, if \(l\) is an atom in \(\cal{P}(\spectrum{\cal{L}})\), then in particular \(l = \{\cal{F}\}\) for some \(\cal{F} \in \spectrum{\cal{L}}\). Now, let:
                \[(I,J) : \stan{\Gamma^{\spectrum{\cal{L}}}} \to \stan{\Gamma^{\spectrum{\cal{L}} \times \{0,1\}}})\]

                be the valued \(\ell\)-group embedding in Lemma \ref{StanValExtensions}. We see then that \(J(a) = \{\cal{F}\} \times \{0,1\} \supset \{(\cal{F},0)\}\). Thus, we have that:
                \[\stan{\Gamma^{\spectrum{\cal{L}} \times \{0,1\}}} \models (\exists y : \bb{L})(\bot < y < J(l)) \implies \cal{V} \models (\exists y : \bb{L})(\bot < y < l)\]

                and so \(l\) isn't an atom.
            \end{itemize}

            \((\impliedby)\) Let \(\phi(\bar{v},\bar{w})\) be an existential \(\lvlgrp\)-formula of sort \(\bb{G}^n \times \bb{L}^m\), \(\bar{a} \in \cal{G}^n\) and \(\bar{l} \in \cal{L}^m\). Let \(\cal{W} = (\cal{H},\cal{K},Q)\) be a densely valued \(\ell\)-group with \(\cal{V} \subgrp \cal{W}\) and \(\cal{W} \models \phi(\bar{a},\bar{l})\). By the Functional Representation Theorem, there is some divisible \(o\)-group \(\Gamma\) such that \(\cal{W} \subgrp \stan{\Gamma^{\spectrum{\cal{K}}}} \eqqcolon \hat{\cal{W}}\). In particular, \(\hat{\cal{W}}\) is algebraically closed by Lemma \ref{StanStructsAreAC}, and \(\hat{\cal{W}} \models \phi(\bar{a},\bar{l})\). Hence, by the Shen-Weispfenning theorem, let:
            \begin{itemize}
                \item \(\chi(\bar{p},\bar{w})\) be an \(\lvlgrp^-\)-formula of sort \(\bb{L}^{k + m}\); and

                \item For each \(i \in \{1,\ldots,k\}\), \(t_i(\bar{v}) : \bb{G}^n \to \bb{G}\) an \(\lvlgrp^-\)-term
            \end{itemize}

            such that the \(\lvlgrp^-\)-formula:
            \[\phi_{\norm{reduct}}(\bar{v},\bar{w}) \equiv (\exists y_1,\ldots,y_k : \bb{L})(\chi(\bar{y},\bar{w}) \land \bigland_{i = 1}^k(y_i = P(t_i(\bar{v}))))\]

            is existential, and equivalent to \(\phi(\bar{v},\bar{w})\) modulo \(\tdvlgrp^+\). In particular, we have that:
            \[(\exists y_1,\ldots,y_k \in \cal{P}(\spectrum{\cal{L}}))(\hat{\cal{W}} \models \chi(\bar{y},\bar{l}) \text{ and } (\forall i \in \{1,\ldots,k\})(y_i = \hat{Q}(t_i^{\hat{\cal{W}}}(\bar{a})))).\]
    
            However, we note that for \(1 \leq i \leq k\), \(y_i = \hat{Q}(t_i^{\hat{\cal{W}}}(\bar{a})) = P(t_i^{\cal{V}}(\bar{a}))\). Hence, as \(\cal{L}\) is atomless Boolean, and \(\chi(\bar{p},\bar{w})\) is an existential \(\lvlgrp\)-formula of sort \(\bb{L}^{k + m}\), then it is immediate from \cite[Theorem \(8\)]{AlgClosedAndExtClosedDistroLats} that \(\cal{V} \models \chi(y_1,\ldots,y_k,\bar{l})\). Hence, we have that:
            \[\cal{V} \models \phi_{\norm{reduct}}(\phi(\bar{a},\bar{l}) \implies \cal{V} \models \phi(\bar{a},\bar{l}).\]
    
            and so \(\cal{V}\) is existentially closed.
        \end{proof}

        As before, we can clearly see that this is a first-order characterisation - being atomless is defined by the \(\lvlgrp\)-sentence \((\forall x : \bb{L})(\bot < x \rightarrow (\exists y : \bb{L})(\bot < y < x))\). Hence, we have there is an \(\lvlgrp\)-theory of existentially closed densely valued \(\ell\)-groups, which we will denote by \(\tvlgrpec\). In particular, this is the model companion for \(\tvlgrp\). In the next section, we shall see that \(\tvlgrpec\) enjoys a very well-behaved model theory.

%% file: Chapters/model-theory-of-ec-valued-l-groups.tex
    \subsection{Completeness}

        The completeness of the theory of existentially closed densely valued \(\ell\)-groups is one of the easier properties to demonstrate, and also illustrates the general method we will use in the rest of this paper. This method is a sort of Ax-Kochen-Ershov transfer principle - we will transfer model-theoretic properties of atomless Boolean algebras to those of existentially closed densely valued \(\ell\)-groups, by way of the Shen-Weispfenning theorem.

        \begin{theorem} \label{ECDVAlGrpIsComp}
            \(\tvlgrpec\) is complete.
        \end{theorem}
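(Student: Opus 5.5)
Fix an \(\lvlgrp\)-sentence \(\sigma\) and two models \(\cal{V} = (\cal{G},\cal{L},P)\) and \(\cal{W} = (\cal{H},\cal{K},Q)\) of \(\tvlgrpec\). The plan is to show \(\cal{V} \models \sigma \iff \cal{W} \models \sigma\) by pushing \(\sigma\) down to the lattice sort. The lattice sorts are atomless Boolean algebras, and those are elementarily equivalent to one another.

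First I check that both structures satisfy the hypotheses of the Shen-Weispfenning Theorem. By Theorem \ref{ECValGrpAltDef}, every model of \(\tvlgrpec\) is algebraically closed. Theorem \ref{ACValGrpAltDef} then says the group sort is divisible and the structure has patching, so \(\tvlgrpec \entails \tvlgrp^+\). I apply Theorem \ref{ShenWeispfenning} to \(\sigma\), viewed as a formula with \(n = m = 0\). This gives an \(\lvlgrp\)-formula \(\chi(\bar{p})\) of sort \(\bb{L}^k\) and closed \(\lvlgrp\)-terms \(t_1,\ldots,t_k\) of group sort with
\[\tvlgrp^+ \entails \sigma \leftrightarrow (\exists y_1,\ldots,y_k : \bb{L})\Big(\chi(\bar{y}) \land \bigland_{i=1}^k (y_i = P(t_i))\Big).\]

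Next I evaluate the closed terms. They are built from \(0\) using \(+,-,\meet,\join\) alone, so each \(t_i\) evaluates to \(0\) in every \(\ell\)-group. Hence \(P(t_i) = P(0) = \top\) by positivity affirmation. So \(\sigma\) is equivalent, modulo \(\tvlgrpec\), to the pure lattice-sort sentence \(\chi(\top,\ldots,\top)\). Some care is needed because \(\chi\) is a formula in \(\lvlgrp\) of sort \(\bb{L}^k\). It could in principle still mention the group sort, for example through quantifiers over \(\bb{G}\) or through \(P\) applied to group terms. I would argue, using the sketch of the proof of Shen-Weispfenning, that \(\chi\) can be taken to be an \(\lbdlat\)-formula (equivalently an \(\lbalg\)-formula, since complements are definable in Boolean lattices). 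If that is not literally available from the statement, a fallback exists. Any occurrence of \(P(s)\) with \(s\) a closed group term is replaced by \(\top\), and any remaining group quantifier is handled by applying the theorem again inside \(\chi\), inducting on the number of group quantifiers. I expect this bookkeeping, making sure the reduct is genuinely a pure lattice sentence, to be the main obstacle.

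Finally I transfer between the lattice sorts. By Theorem \ref{ECValGrpAltDef}, \(\cal{L}\) and \(\cal{K}\) are both atomless Boolean algebras. They are non-trivial because \(P\) is surjective and \(\cal{G} \not\cong \{0\}\) by Lemma \ref{ACValGrpProps}, and they are atomless. The theory of atomless Boolean algebras is complete: it is \(\omega\)-categorical by the back-and-forth argument, has no finite models, and so is complete by the Łoś-Vaught test. Therefore \(\cal{L} \models \chi(\top,\ldots,\top) \iff \cal{K} \models \chi(\top,\ldots,\top)\), and hence \(\cal{V} \models \sigma \iff \cal{W} \models \sigma\). Since \(\tvlgrpec\) is consistent, as standard structures embed into existentially closed ones, it is complete.
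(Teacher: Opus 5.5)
Your proof is correct and follows the paper's route. You reduce the sentence via the Shen--Weispfenning theorem to a lattice-sort sentence, which the paper likewise treats as an \(\lbdlat\)-sentence, so your fallback induction is not needed; you then transfer across models using the completeness of the theory of atomless Boolean algebras. Your additional checks, namely that models of \(\tvlgrpec\) meet the theorem's hypotheses, that closed group terms evaluate to \(0\) so each \(P(t_i)\) is \(\top\), and that the lattice sorts are non-trivial, are details the paper leaves implicit.
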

        \begin{proof}
            Let \(\cal{V} = (\cal{G},\cal{L},P)\) and \(\cal{W} = (\cal{H},\cal{K},Q)\) be existentially closed densely valued \(\ell\)-groups, and \(\phi\) an \(\lvlgrp\)-sentence. By the Shen-Weispfenning theorem, there is an \(\lbdlat\)-sentence \(\chi\) such that \(\tvlgrpec \entails \phi \leftrightarrow \chi\). Then, as the theory of atomless Boolean algebras is complete by \cite[Section \(3.2.4\)]{Hodges}, we have that:
            \[\cal{V} \models \phi \iff \cal{L} \models \chi \iff \cal{K} \models \chi \iff \cal{W} \models \phi.\]

            Thus, \(\cal{V} \equiv \cal{W}\), and so \(\tvlgrpec\) is complete.
        \end{proof}

        When a proof of a theory's completeness is given, it is the author's view that this ought to be accompanied by an explicit example. Hence, we will below give a general method for constructing existentially closed densely valued \(\ell\)-groups, and as a corollary give an explicit countable model.

        \begin{lemma} \label{ECValGrpsViaDirectLimits}
            Let \(\cal{W} = (\cal{H},\cal{K},Q)\) be a densely valued \(\ell\)-group. Then, \(\cal{W}\) is existentially closed if and only if there exists a directed system \(\langle \cal{V}_i,(\alpha_{i,j},\beta_{i,j})\rangle_{i,j \in I}\) of algebraically closed densely valued \(\ell\)-groups such that:
            \begin{itemize}
                \item[\(i.\)] For each \(i,j \in I\), \((\alpha_{i,j},\beta_{i,j})\) is a valued \(\ell\)-group embedding;
                
                \item[\(ii.\)] For each \(i \in I\) and \(l \in \cal{L}_i\) with \(l \neq \bot\), there exists some \(j \in I\) with \(i \leq j\), and \(k \in \cal{L}_j\) such that \(\bot < k < \beta_{i,j}(l)\); and

                \item[\(iii.\)] \(\cal{W} = \underset{\rightarrow}{\lim} \cal{V}_i\).
            \end{itemize}
        \end{lemma}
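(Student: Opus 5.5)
The argument splits into the two implications of Lemma \ref{ECValGrpsViaDirectLimits}. The reverse implication is the substantive one; the forward implication is essentially trivial.

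\textbf{Forward direction.} Suppose \(\cal{W}\) is existentially closed. Take the trivial directed system: \(I = \{0\}\), \(\cal{V}_0 = \cal{W}\), and \((\alpha_{0,0},\beta_{0,0})\) the identity. Then \(\cal{V}_0\) is algebraically closed, since existential closure implies algebraic closure. Condition \(i.\) holds because the identity is a valued \(\ell\)-group embedding, and \(iii.\) holds trivially. For condition \(ii.\), note that \(\cal{K}\) is atomless by Theorem \ref{ECValGrpAltDef}, so every \(l \neq \bot\) has some \(k\) with \(\bot < k < l\) in \(\cal{K}\), and we may take \(j = i = 0\).

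\textbf{Reverse direction.} Given the system, I will use Theorem \ref{ECValGrpAltDef}: it suffices to show that \(\cal{W}\) is algebraically closed and that \(\cal{K}\) is atomless.

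First I check that \(\cal{W}\) is a densely valued \(\ell\)-group. By Theorem \ref{ACValGrpAltDef}, algebraic closedness of densely valued \(\ell\)-groups is axiomatised by \(\tdvlgrp\) together with divisibility, Booleanness of the lattice sort, and patching. All of these are \(\forall\exists\)-sentences. Positivity detecting is universal, essentially total is \(\forall\exists\), and patching has the form \(\forall f,g,\phi,\psi\,(\text{q.f.} \rightarrow \exists \chi\, \text{q.f.})\). Since the transition maps are embeddings, \(\forall\exists\)-sentences true in every \(\cal{V}_i\) are preserved under the direct limit (Chang--Łoś--Suszko, or a direct check). Hence \(\cal{W} \models \tvlgrpac\), so \(\cal{W}\) is an algebraically closed densely valued \(\ell\)-group.

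Atomlessness comes from condition \(ii.\). Every element of \(\cal{K}\) has the form \(\beta_{i,\infty}(l)\) for some \(i\) and some \(l \in \cal{L}_i\), where \(\beta_{i,\infty}\) denotes the canonical map into the limit. If \(\beta_{i,\infty}(l) \neq \bot\), then \(l \neq \bot\) because \(\beta_{i,\infty}\) preserves \(\bot\). Condition \(ii.\) then gives \(j \geq i\) and \(k \in \cal{L}_j\) with \(\bot < k < \beta_{i,j}(l)\). The canonical map \(\beta_{j,\infty}\) is an injective lattice morphism, so strict inequalities are preserved, and therefore
\[\bot < \beta_{j,\infty}(k) < \beta_{i,\infty}(l).\]
Thus \(\cal{K}\) is atomless, and Theorem \ref{ECValGrpAltDef} finishes the proof.

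\textbf{Main obstacle.} The only delicate point is making sure the direct limit is taken in the multi-sorted category. Both sorts and \(P\) must be defined componentwise, and the canonical maps must be embeddings; this follows from \(i.\). One must also confirm that every axiom used (divisibility, Boolean complements, patching, essential totality) is genuinely \(\forall\exists\), so that it passes to unions of chains and directed unions. If any axiom looked problematic, I would verify it directly by pulling the finitely many parameters back to a single \(\cal{V}_j\), which is possible by directedness.
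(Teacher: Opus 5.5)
Your proposal is correct and follows essentially the same route as the paper. For the forward direction you both use the trivial one-object system. For the converse you both use the fact that the \(\forall\exists\) theory of algebraically closed densely valued \(\ell\)-groups is preserved under directed limits of embeddings, then push condition \(ii.\) through the injective canonical maps to make \(\cal{K}\) atomless, and conclude with Theorem~\ref{ECValGrpAltDef}. You also explicitly verify condition \(ii.\) for the trivial system via atomlessness of \(\cal{K}\), which the paper leaves implicit.
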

        \begin{proof}
            \((\implies)\) Immediate, as \(\cal{V} = \underset{\rightarrow}{\lim} \cal{V}\), and this system clearly satisfies \(i.\). \\
            
            \((\impliedby)\) First, we note that \(\tvlgrpac\) is an \(\forall\exists\) \(\lvlgrp\)-theory. Hence, \(\cal{W} \coloneqq \underset{\rightarrow}{\lim} \cal{V}_i\) exists, and \(\cal{W} \models \tvlgrpac\) by \cite[Theorem \(2.4.6\)]{Hodges}. By the definition of a direct limit, let \((\Phi_i,\Psi_i) : \cal{V}_i \to \cal{W}\) be valued \(\ell\)-group embeddings such that this diagram commutes for all \(i,j \in I\) with \(i \leq j\):
            \begin{center}
                \begin{tikzcd}
                    & \cal{W} &                                                       \\
                    \cal{V}_i \arrow[rr, "{(\alpha_{i,j},\beta_{i,j})}" description] \arrow[ru, "{(\Phi_i,\Psi_i)}" description] &         & \cal{V}_j \arrow[lu, "{(\Phi_j,\Psi_j)}" description]
                \end{tikzcd}
            \end{center}
            
            Let \(\cal{W} = (\cal{H},\cal{K},Q)\), and let \(k \in \cal{K}\) with \(l \neq \bot\). By construction, there is some \(i \in I\) and \(l \in \cal{L}_i\) with \(k = \Psi_i(l)\) and \(l \neq \bot\). Thus, by \(i.\), there exists some \(j \in I\) with \(i \leq j\) and \(p \in \cal{L}_j\) such that \(\bot < p < \beta_{i,j}(l)\). It follows that:
            \[\bot = \Psi_j(\bot) < \Psi_j(p) < \Psi_j(\beta_{i,j}(l)) = \Psi_i(l) = k.\]

            Thus, \(\cal{K}\) is atomless, and hence \(\cal{W} \models \tvlgrpec\), as required.
        \end{proof}

        \begin{example} [\(2^n\)-Periodic Functions from \(\bb{N}\) to \(\bb{Q}\)\label{2nPeriodFuncEx}]
            For each \(n \in \bb{N}\), let \(I_n = \{1,2,3,\ldots,2^n\}\), and \(\cal{V}_n = \stan{\bb{Q}^{I_n}}\). Further, we define maps \(\alpha_n : \bb{Q}^{I_n} \to \bb{Q}^{I_{n + 1}}\) and \(\beta_n : \cal{P}(I_n) \to \cal{P}(I_{n + 1})\) by:
            \begin{align*}
                \alpha_n(f)(k) = & \begin{cases}
                    f(k) & 1 \leq k \leq 2^n \\
                    f(k - 2^n) & 2^n + 1 \leq k \leq 2^{n + 1}
                \end{cases} \\
                \beta_n(\{f \geq 0\}) = & \{\alpha_n(f) \geq 0\}.
            \end{align*}

            It is clear that \((\alpha_n,\beta_n)\) is a valued \(\ell\)-group embedding. Further, we note that for any \(n \in \bb{N}\) and \(\{f \geq 0\} \in \cal{P}(I_n)\) with \(\emptyset \subset \{f \geq 0\}\), we can define a map:
            \[g : I_{n + 1} \to \bb{Q} : k \mapsto \begin{cases}
                f(k) & 1 \leq k \leq 2^n \\
                -1 & 2^n + 1 \leq k \leq 2^{n + 1}
            \end{cases}\]

            and so we have that \(\emptyset \subset \{g \geq 0\} \subset \beta_n(\{f \geq 0\})\). Finally, we observe that \(\stan{\bb{Q}^{I_n}}\) is algebraically closed for each \(n \in \bb{N}\).  Hence, by Lemma \ref{ECValGrpsViaDirectLimits}, \(\cal{W} = \bigcup_{n \in \bb{N}} \cal{V}_n\) is an existentially closed densely valued \(\ell\)-group. \\

            We now observe that for any \(f : I_n \to \bb{Q}\), \(\alpha_n(f)\) is the map from \(I_{n + 1}\) to \(\bb{Q}\) which repeats \(f\) exactly twice. Hence, it is not too hard to see that \(\cal{W}\) is the set of functions from \(\bb{N}\) to \(\bb{Q}\) which have a period of \(2^n\) for some \(n \in \bb{N}\).
        \end{example}

        This example also gives us a better handle on the map \(P\), as the next lemma shows.

        \begin{lemma} \label{PAltDefInECValGrps}
            Let \(\cal{V} = (\cal{G},\cal{L},P)\) be an existentially closed valued \(\ell\)-group. Then, for all \(a,b \in \cal{G}^+\), \(P(-a) = P(-b)\) if and only if \(\{c \in \cal{G} \ | \ a \meet c = 0\} = \{c \in \cal{G} \ | \ b \meet c = 0\}\).
        \end{lemma}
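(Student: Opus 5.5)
We plan to reduce both conditions to statements about the minimal prime spectrum. Since \(\cal{V}\) is existentially closed, it is in particular algebraically closed, so Lemma \ref{ACValGrpProps} \((1)\) and \((3)\) apply. As in the proof of Lemma \ref{ACValGrpProps} \((5)\), we may therefore assume WLOG that \(\cal{V} = (\cal{G},\qcclat{X},V(- \meet 0) \cap X)\), where \(X = \lspec{\cal{G}}^{\min}\). For \(a \in \cal{G}^+\) we have \((-a) \meet 0 = -a\) and \(V(-a) = V(a)\). Hence
\[P(-a) = P(-b) \iff V(a) \cap X = V(b) \cap X,\]
equivalently \(D(a) \cap X = D(b) \cap X\). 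Note also that \(\{c \in \cal{G} \ | \ a \meet c = 0\}\) is just the polar \(\pol{a}\) (for \(c\) not necessarily positive, \(a \meet c = 0\) forces \(c \geq 0\), so this agrees with the usual polar). So the claim becomes: for \(a,b \in \cal{G}^+\), \(V(a) \cap X = V(b) \cap X\) if and only if \(\pol{a} = \pol{b}\).

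The key step is the standard description of polars via minimal primes: for \(a \in \cal{G}^+\),
\[\pol{a} = \bigcap \{\cal{J} \in X \ | \ a \notin \cal{J}\} = \bigcap (D(a) \cap X).\]
We would prove this directly. If \(c \meet a = 0\) (so \(c \geq 0\)) and \(\cal{J} \in X\) with \(a \notin \cal{J}\), then primality gives \(c \in \cal{J}\). Conversely, suppose \(c \geq 0\) lies in every minimal prime omitting \(a\), but \(d = c \meet a \neq 0\). Then by Lemma \ref{DenseSubspaceAltDef} (\(\bigcap X = \{0\}\), as \(X\) is dense) some minimal prime \(\cal{J}\) omits \(d\). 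It then omits both \(a\) and \(c\), a contradiction. For general \(c\), apply this to \(\abs{c}\), using that polars and prime ideals are convex. Granting this, \((\implies)\) is immediate: \(D(a) \cap X = D(b) \cap X\) gives equal intersections, hence \(\pol{a} = \pol{b}\).

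For \((\impliedby)\), we need to recover \(D(a) \cap X\) from \(\pol{a}\), and this is the main obstacle. We plan to use the classical fact that for a minimal prime \(\cal{J}\), \(a \in \cal{J}\) if and only if \(\pol{a} \not\subseteq \cal{J}\), i.e. some \(c \geq 0\) with \(c \meet a = 0\) satisfies \(c \notin \cal{J}\). This is the usual characterisation of minimal primes: \(\cal{G}^+ \setminus \cal{J}\) is a maximal meet-filter of nonzero elements. We would cite it from \cite{LGroupTextbook1}, or prove it by noting that if every \(c \in \pol{a}\) lay in \(\cal{J}\) while \(a \in \cal{J}\), then \(\cal{J}\) would not be minimal. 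With this fact, \(\pol{a} = \pol{b}\) gives, for every \(\cal{J} \in X\), \(a \in \cal{J} \iff b \in \cal{J}\). Hence \(V(a) \cap X = V(b) \cap X\), and so \(P(-a) = P(-b)\).

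Alternatively, since \(X\) is quasicompact and Boolean by Lemma \ref{ACValGrpProps} \((3)\), each \(V(a) \cap X\) is clopen in \(X\). So \(D(a) \cap X\) is the interior of the closure of \(D(a) \cap X\), which is determined by \(\pol{a}\) through the formula above. This gives a backup route if the minimal-prime characterisation proves awkward to cite.
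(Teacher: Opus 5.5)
Your proof is correct, but it takes a genuinely different route from the paper.

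The paper never works inside an arbitrary model. It observes that the claim is a single $\lvlgrp$-sentence and invokes completeness of $\tvlgrpec$ (Theorem \ref{ECDVAlGrpIsComp}, which rests on Shen--Weispfenning and on completeness of atomless Boolean algebras). It then checks the sentence in the explicit model of $2^n$-periodic functions $\bb{N} \to \bb{Q}$, where $P(-f)$ is just the zero set of $f$ and everything is pointwise.

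You instead work in an arbitrary model. You use Lemma \ref{ACValGrpProps}(3) to identify $P(-a)$ with $V(a) \cap \lspec{\cal{G}}^{\min}$, and then apply classical minimal-prime theory: $\pol{a}$ is the intersection of the minimal primes omitting $a$, and for minimal $\cal{J}$ one has $a \in \cal{J} \iff \pol{a} \not\subseteq \cal{J}$. Your clopen backup route also works, and it even bypasses the second fact.

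Your approach buys three things:
\begin{itemize}
\item It is independent of the completeness theorem and of Shen--Weispfenning.
\item It uses only that $\cal{L}$ is Boolean, so it proves the statement for every algebraically closed densely valued $\ell$-group, not just the existentially closed ones.
\item It avoids the model check, which in the paper is not airtight for $(\Leftarrow)$. Testing the annihilator hypothesis against the constant function $1$ only gives $f = 0 \iff g = 0$ as elements, not equality of zero sets. One needs something like the indicator of the residue class of a point modulo a common period of $f$ and $g$.
\end{itemize}
The paper's approach, in exchange, reduces everything to elementary pointwise reasoning once completeness is available, with no appeal to the structure theory of polars and minimal primes.

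One cosmetic point. The set $\{c \mid a \meet c = 0\}$ is only the positive cone of the usual polar $\{c \mid \abs{c} \meet a = 0\}$. Your displayed identity $\pol{a} = \bigcap(D(a) \cap X)$ should therefore be read with the symmetric polar, or with $\cal{G}^+$ intersected on the right, as your remark about applying it to $\abs{c}$ implicitly does. This does not affect the argument.
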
 
        \begin{proof}
            We remark our claim is equivalent to the \(\lvlgrp\)-sentence:
            \[(\forall a,b : \bb{G})((0 \leq a,b) \rightarrow ((P(-a) = P(-b)) \leftrightarrow (\forall c : \bb{G})(a \meet c = 0 \leftrightarrow b \meet c = 0))).\] 
            
            Hence, as \(\tvlgrpec\) is complete, it suffices to check it for the valued \(\ell\)-group of \(2^n\)-periodic functions from \(\bb{N}\) to \(\bb{Q}\), which we denote by \(\cal{W} = (\cal{H},\cal{K},Q)\).  Therefore, let \(f,g \in \cal{H}^+\), with periods \(2^n\) and \(2^m\) respectively. We check each direction separately:
            \begin{itemize}
                \item[\((\implies)\)] If \(\{-f \geq 0\} = \{-g \geq 0\}\), then in particular \(\{f = 0\} = \{g = 0\}\). Thus, let \(h \in \cal{G}\) with \(f \meet h = 0\). In particular, for all \(n \in \bb{N}\) with \(f(n) \neq 0\), \(h(n) = 0\). However, this clearly gives that \(h \meet g = 0\), and so by symmetry we are done. 

                \item[\((\impliedby)\)] Let \(c(n) = 1\) for all \(n \in \bb{N}\). Clearly, \(c \in \cal{H}\), and so we see that:
                \[\{f = 0\} = \{f \meet c = 0\} = \{g \meet c = 0\} = \{g = 0\}.\]

                As \(\{f = 0\} = \{-f \geq 0\}\), then we are done. \qedhere
            \end{itemize}
        \end{proof}

    \subsection{Quantifier Elimination}

        Another major model-theoretic property enjoyed by atomless Boolean algebras is that of quantifier elimination. However, in our setup, there is some subtlety. Our language omits a symbol for Boolean negation in the lattice sort, and so we will need to expand our language by a new function symbol. However, the Shen-Weispfenning theorem doesn't guarantee that we can eliminate group quantifiers in this new language. Thus, we need the following lemma first.

        \begin{lemma} \label{BoolAlgRemovalOfComp}
            Let \(\lbalg = \lbdlat \cup \{\lnot\}\), with \(\lnot\) a unary function symbol, and let \(T\) be the \(\lbalg\)-theory of Boolean algebras. Then, for all atomic \(\lbalg\)-formulae \(\phi(\bar{v})\), there exists an \(\lbdlat\)-formula \(\psi(\bar{v})\) such that \(T \entails (\forall \bar{a})(\phi(\bar{a}) \leftrightarrow \psi(\bar{a}))\).
        \end{lemma}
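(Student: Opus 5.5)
We need to show that every atomic \(\lbalg\)-formula is equivalent, modulo \(T\), to an \(\lbdlat\)-formula. The plan is to eliminate \(\lnot\) by introducing existentially quantified witnesses for complements. The witnesses are then pinned down by the defining property of complements, which is expressible without \(\lnot\).

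The key observation is that in any Boolean algebra complements are unique. So for any \(\lbalg\)-term \(t\) and variable \(y\),
\[T \entails y = \lnot t \leftrightarrow (y \meeta t = \bot \land y \joina t = \top).\]

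The first step is to handle terms. We proceed by induction on the complexity of an \(\lbalg\)-term \(t(\bar v)\), and construct an \(\lbdlat\)-formula \(\theta_t(\bar v, z)\) such that \(T \entails \theta_t(\bar v,z) \leftrightarrow z = t(\bar v)\).

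\begin{itemize}
\item For variables and the constants \(\top, \bot\), take \(\theta_t \equiv z = t\).
\item For \(t = t_1 \meeta t_2\), take \((\exists z_1,z_2)(\theta_{t_1}(\bar v,z_1) \land \theta_{t_2}(\bar v,z_2) \land z = z_1 \meeta z_2)\). The case of \(\joina\) is analogous.
\item For \(t = \lnot s\), take \((\exists y)(\theta_s(\bar v,y) \land z \meeta y = \bot \land z \joina y = \top)\).
\end{itemize}

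Correctness of the first two cases is immediate because each \(\theta_{t_i}\) defines the graph of a function. The negation case uses uniqueness of complements, together with the axiom \(x \meeta \lnot x = \bot \land x \joina \lnot x = \top\) of \(T\) for existence.

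The second step is to handle atomic formulae. An atomic \(\lbalg\)-formula has the form \(t_1 = t_2\) or \(t_1 \leqa t_2\). We take \(\psi(\bar v) \equiv (\exists z_1,z_2)(\theta_{t_1}(\bar v,z_1)\land\theta_{t_2}(\bar v,z_2)\land z_1 \leqa z_2)\), and similarly with \(=\) in place of \(\leqa\). Then \(T \entails \phi \leftrightarrow \psi\).

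If one also wants a universal form, it is given by \((\forall z_1,z_2)(\theta_{t_1}\land\theta_{t_2}\rightarrow z_1 \leqa z_2)\). This is useful later, since it means \(\phi\) is equivalent to both an existential and a universal \(\lbdlat\)-formula. That fact should feed into the quantifier elimination argument via Shen-Weispfenning.

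There is no real obstacle here. The only point needing care is justifying uniqueness of complements in distributive lattices. Suppose \(y\) and \(y'\) are both complements of \(t\). Then
\[y = y \meeta (t \joina y') = (y \meeta t)\joina(y\meeta y') = y \meeta y',\]
so \(y \leqa y'\), and \(y' \leqa y\) follows by symmetry.
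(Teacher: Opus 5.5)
Your proof is correct and is essentially the paper's argument: both eliminate each occurrence of \(\lnot s\) by an existentially quantified witness \(b\) constrained by \(b \joina s = \top \land b \meeta s = \bot\), relying on uniqueness of complements in distributive lattices. The only difference is bookkeeping. You build graph formulas \(\theta_t\) by induction on terms, whereas the paper inducts on the number of occurrences of \(\lnot\) in the atomic formula; your version also spells out the uniqueness-of-complements step that the paper treats as ``clear''.
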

        \begin{proof}
            For any occurrence of \(\lnot s(\bar{v})\) appearing in \(\phi(\bar{v})\), we define a new \(\lbalg\)-formula \(\phi'(\bar{v},w)\) replacing every occurrence of \(\lnot s(\bar{v})\) by w. Then, \(\phi'(\bar{v},w)\) has fewer occurrences of \(\lnot\) then \(\phi(\bar{v})\), and it is clear that:
            \[T \entails (\forall \bar{a})(\phi(\bar{a}) \leftrightarrow (\exists b)(\phi'(\bar{a},b) \land (b \join s(\bar{a}) = \top) \land (b \meet s(\bar{a}) = \bot)).\]

            As only finitely many instances of \(\lnot\) appear in \(\phi(\bar{v})\), then we are done.
        \end{proof}

        Now, we can go via this lemma to show quantifier elimination in our expanded language.

        \begin{theorem} \label{QEForECValGrpsWBoolComp}
            Let \(\lvlgrpskolem = \lvlgrp \cup \{\lnot\}\), with \(\lnot : \bb{L} \to \bb{L}\). Let \(\tvlgrpecskolem = \tvlgrpec \cup \{\delta\}\), where \(\delta\) is the \(\lvlgrpskolem\)-sentence \((\forall a : \bb{L})((a \joina \lnot a = \top) \land (a \meeta \lnot a = \bot))\). Then:
            \begin{itemize}
                \item[\((1)\)] For all existentially closed densely valued \(\ell\)-groups \(\cal{V} = (\cal{G},\cal{L},P)\), there is a unique expansion \(\cal{V}^+\) of \(\cal{V}\) to an \(\lvlgrpskolem\)-structure, with \(\cal{V}^+ \models \tvlgrpecskolem\). 

                \item[\((2)\)] \(\tvlgrpecskolem\) has quantifier elimination, and in particular for every \(\lvlgrpskolem\)-formula \(\phi(\bar{v},\bar{w})\) of sort \(\bb{G}^n \times \bb{L}^m\), there exists:
                \begin{itemize}
                    \item A quantifier-free \(\lvlgrpskolem\)-formula \(\chi(\bar{p},\bar{w})\) of sort \(\bb{L}^{k + m}\); and

                    \item For each \(i \in \{1,\ldots,k\}\), an \(\lvlgrpskolem\)-term \(t_i(\bar{v}) : \bb{G}^n \to \bb{G}\)
                \end{itemize}

                such that:
                \[\tvlgrpecskolem \entails (\forall \bar{a} : \bb{G})(\forall \bar{x} : \bb{L})(\phi(\bar{a},\bar{x}) \leftrightarrow \chi(P(t_1(\bar{a})),\ldots,P(t_k(\bar{a})),\bar{x})).\]
            \end{itemize}
        \end{theorem}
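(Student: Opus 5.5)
For part \((1)\), I would use the fact that \(\cal{L}\) is Boolean for every existentially closed densely valued \(\ell\)-group (by Theorem \ref{ECValGrpAltDef}, such \(\cal{V}\) is algebraically closed, so Theorem \ref{ACValGrpAltDef} applies). In a bounded distributive lattice, complements are unique when they exist. Hence the only way to interpret \(\lnot\) so that \(\delta\) holds is to send each \(l\) to its unique complement. This gives existence and uniqueness of \(\cal{V}^+\), and \(\cal{V}^+ \models \tvlgrpecskolem\) because its \(\lvlgrp\)-reduct is \(\cal{V}\).

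For part \((2)\), let \(\phi(\bar{v},\bar{w})\) be an \(\lvlgrpskolem\)-formula. The first step is to remove \(\lnot\). I would adapt the argument of Lemma \ref{BoolAlgRemovalOfComp} to arbitrary formulae. By induction on the number of occurrences of \(\lnot\), replace each atomic subformula containing \(\lnot s\) by \((\exists b : \bb{L})(\ldots b \ldots \land b \joina s = \top \land b \meeta s = \bot)\). Here \(s\) may be an \(\bb{L}\)-term involving \(P(t)\) for group terms \(t\). This yields an \(\lvlgrp\)-formula \(\phi'\) with \(\tvlgrpecskolem \entails \phi \leftrightarrow \phi'\).

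The second step applies the Shen-Weispfenning theorem (Theorem \ref{ShenWeispfenning}) to \(\phi'\). Its hypotheses hold because models of \(\tvlgrpec\) have divisible group sort and patching. This produces an \(\lvlgrp\)-formula \(\chi'(\bar{p},\bar{w})\) purely of sort \(\bb{L}^{k+m}\), together with terms \(t_i(\bar{v})\), such that \(\phi'(\bar{a},\bar{x})\) is equivalent to \(\chi'(P(t_1(\bar{a})),\ldots,P(t_k(\bar{a})),\bar{x})\). The existential over the \(y_i\) collapses by substitution, since each \(y_i\) is determined as \(P(t_i(\bar{a}))\).

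The third step treats \(\chi'\). It involves only the lattice sort, i.e.\ it is an \(\lbdlat\)-formula in the variables \(\bar{p},\bar{w}\). In any model of \(\tvlgrpecskolem\) the lattice sort, with its \(\lnot\), is an atomless Boolean algebra, and the theory of atomless Boolean algebras has quantifier elimination in \(\lbalg\) (\cite{Hodges}). So there is a quantifier-free \(\lbalg\)-formula \(\chi(\bar{p},\bar{w})\) equivalent to \(\chi'\) modulo that theory. This equivalence transfers to \(\tvlgrpecskolem\), since the lattice sort of each model satisfies it. We thus obtain the displayed equivalence, and \(\chi(P(t_1(\bar{a})),\ldots)\) is quantifier-free in \(\lvlgrpskolem\), which gives quantifier elimination.

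The point needing most care is the first step. Shen-Weispfenning is stated only for \(\lvlgrp\)-formulae, so the complement symbol must be eliminated before invoking it. That elimination introduces new lattice quantifiers, which is harmless here because they are later absorbed by the Boolean-algebra quantifier elimination. I would also check that the reduction in Shen-Weispfenning holds with the same \(\chi'\) and \(t_i\) uniformly across all models, so that the final equivalence is a consequence of the theory rather than holding only model by model.
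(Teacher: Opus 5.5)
Your proposal is correct and follows the paper's proof essentially step for step. For \((1)\) it uses uniqueness of complements in the Boolean lattice sort. For \((2)\) it eliminates \(\lnot\) via (an extension of) Lemma \ref{BoolAlgRemovalOfComp}, applies Theorem \ref{ShenWeispfenning}, and finishes with quantifier elimination for atomless Boolean algebras in \(\lbalg\). Your explicit note that Lemma \ref{BoolAlgRemovalOfComp} must be extended from atomic \(\lbalg\)-formulae to arbitrary \(\lvlgrpskolem\)-formulae, where the complemented term may contain \(P(t)\), addresses a point the paper applies without comment; your induction handles it, provided you replace innermost occurrences of \(\lnot\) first.
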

        \begin{proof}
            \((1)\) We remark that, if \(\cal{V} \models \tvlgrpec\), then \(\cal{L}\) is Boolean. Thus, for each \(l \in \cal{L}\), there is a unique \(k \in \cal{L}\) with \(l \join k = \top\) and \(l \meet k = \bot\). Hence, our only choice for \(\lnot^{\cal{V}^+}\) is to send each \(l\) to this unique \(k\). \\

            \((2)\) Let \(\phi(\bar{v},\bar{w})\) be an \(\lvlgrpskolem\)-formula of sort \(\bb{G}^n \times \bb{L}^m\). By Lemma \ref{BoolAlgRemovalOfComp}, we can replace any subformula containing \(\lnot\) by a one not containing it to obtain an \(\lvlgrp\)-formula \(\phi'(\bar{v},\bar{w})\) such that:
            \[\tvlgrpecskolem \entails (\forall \bar{a} : \bb{G})(\forall \bar{x} : \bb{L})(\phi(\bar{a},\bar{x}) \leftrightarrow \phi'(\bar{a},\bar{x})).\]

            Now, by the Shen-Weispfenning Theorem, there is:
            \begin{itemize}
                \item An \(\lvlgrp\)-formula \(\chi(\bar{p},\bar{w})\) of sort \(\bb{L}^{k + m}\); and

                \item For \(i \in \{1,\ldots,k\}\), \(\lvlgrp\)-terms \(t_i(\bar{v},\bar{w}) : \bb{G}^n \to \bb{G}\)
            \end{itemize}

            such that the \(\lvlgrp\)-formula:
            \[\phi'_{\norm{reduct}}(\bar{v},\bar{w}) \equiv (\exists y_1,\ldots,y_k : \bb{L})(\chi(\bar{y},\bar{w}) \land \bigland_{i = 1}^k (y_i = P(t_i(\bar{v}))))\]

            satisfies \(\tvlgrpec \entails (\forall \bar{a} : \bb{G})(\forall \bar{x} : \bb{L})(\phi(\bar{a},\bar{x}) \leftrightarrow \phi'_{\norm{reduct}}(\bar{a},\bar{x}))\). As \(\chi(\bar{p},\bar{w})\) is an \(\lbalg\)-formula, then by \cite[Exercise \(8.5.17\)]{Hodges}, there is a quantifier-free \(\lbalg\)-formula \(\chi'(\bar{p},\bar{w})\) such that \(\tvlgrpecskolem \entails (\forall \bar{x},\bar{y} : \bb{L})(\chi(\bar{y},\bar{x}) \leftrightarrow \chi'(\bar{y},\bar{x}))\). It then follows that:
            \[\tvlgrpecskolem \entails (\forall \bar{a} : \bb{G})(\forall \bar{x} : \bb{L})(\phi(\bar{a},\bar{x}) \leftrightarrow \chi'(P(t_1(\bar{a})),\ldots,P(t_k(\bar{a})),\bar{x})).\]

            In particular, \(\tvlgrpecskolem\) has quantifier elimination, as required.
        \end{proof}

        We notice that the above theorem is somewhat stronger than just quantifier elimination - it tell us we can eliminate all quantifiers onto the lattice sort, in the style of Shen-Weispfenning. 

    \subsection{\(\omega\)-Categoricity and Automorphisms}

        The final model-theoretic property we will explore is that of \(\omega\)-categoricity. There is some hope a-priori that we can show this for \(\tvlgrpecskolem\), as atomless Boolean algebras are \(\omega\)-categorical. However, the addition of an \(\ell\)-group adds in a lot of complexity on the level of types, which the next lemma demonstrates:

        \begin{lemma} \label{2PeriodicFuncsOmitAType}
            For \(n \in \bb{N}\), let \(\phi_n(v)\) be the \(\lvlgrp\)-formula of sort \(\bb{G}^2\) given by:
            \[\phi_n(v,w) \equiv ((0 < v, w) \land (nv < w))\]

            and let \(\rho = \{\phi_n(v,w) \ | \ n \in \bb{N}, \ n \geq 1\}\). Then:
            \begin{itemize}
                \item[\((1)\)] There is some \(\hat{\rho} \in \typespace{\tvlgrpec}{(v,w)}\) such that \(\rho \subseteq \hat{\rho}\).

                \item[\((2)\)] Let \(\cal{V} = (\cal{G},\cal{L},P)\) be the valued \(\ell\)-group of \(2^n\)-periodic functions from \(\bb{N} \to \bb{Q}\)\footnote{See Example \ref{2nPeriodFuncEx}.}. Then, \(\cal{V}\) omits \(\hat{\rho}\).
            \end{itemize}
        \end{lemma}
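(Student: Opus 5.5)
For \((1)\), I will show that \(\tvlgrpec \cup \rho\) is consistent and then extend it to a complete type using completeness of \(\tvlgrpec\) (Theorem \ref{ECDVAlGrpIsComp}). By compactness it suffices to realise every finite subset of \(\rho\) in some model of \(\tvlgrpec\). Since \(n v < w\) implies \(m v < w\) for \(m \leq n\) when \(v \geq 0\), a finite subset is implied by a single \(\phi_N\). So I only need one model in which \(\phi_N\) is realised for every \(N\).

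The \(2^n\)-periodic model \(\cal{V}\) of Example \ref{2nPeriodFuncEx} already does this. Take \(v\) the constant function \(1\) and \(w\) the constant function \(N+1\). Then \(0 < v,w\) and \(Nv < w\) pointwise and strictly. Compactness then gives a model of \(\tvlgrpec \cup \rho\), and \(\hat{\rho}\) is the complete type of a realising pair.

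A subtlety is that \(\rho\) as stated may be realisable even in \(\cal{V}\) (for instance by \(v = 1\) and \(w\) a large constant only if \(w\) is unbounded, which fails). Working in \(\bb{Q}\)-valued periodic functions, every element takes finitely many values. So \(\rho\) cannot be realised in \(\cal{V}\): that would need \(w(k) > n v(k)\) for all \(n\) at some \(k\) with \(v(k) > 0\). Here I will read \(0 < v\) as \(0 \leq v\), \(v \neq 0\), so that \(v(k) > 0\) somewhere. Realising \(\rho\) needs \(w\) to be infinitely larger than \(v\) at some coordinate where \(v > 0\), and \(\bb{Q}\) is Archimedean. So \((2)\) follows as soon as any extension \(\hat{\rho}\) of \(\rho\) is omitted.

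Hence the real content of \((2)\) is the argument just given. Suppose \(f,g \in \cal{G}\) realised \(\hat{\rho} \supseteq \rho\). Then \(nf < g\) for all \(n \geq 1\), with \(f \geq 0\) and \(f \neq 0\). Pick \(k\) with \(f(k) > 0\). Then \(n f(k) \leq g(k)\) for all \(n\), contradicting the Archimedean property of \(\bb{Q}\). Thus \(\cal{V}\) omits \(\hat{\rho}\).

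The main obstacle is \((1)\), specifically the finite-satisfiability check. The naive witnesses (constants) satisfy each \(\phi_N\) but not all simultaneously, so the step must go through compactness rather than a single witness. I must make sure the type is consistent with \(\tvlgrpec\) and not just with \(\tlgrp\). I will handle this by doing the finite-subset check inside \(\cal{V}\) itself, which is a model of \(\tvlgrpec\).
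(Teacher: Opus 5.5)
Your proof is correct and follows the paper's route. Finite subsets of \(\rho\) reduce to a single \(\phi_N\), which is realised; the partial type is extended to a complete one; and (2) is the same Archimedean argument at a coordinate where \(f > 0\). The only difference is cosmetic: you witness \(\phi_N\) by the constants \(1\) and \(N+1\) inside \(\cal{V}\) itself, whereas the paper takes any \(0 < b\) in an arbitrary model of \(\tvlgrpec\) and sets \(a = \frac{b}{N+1}\). Also, the muddled ``subtlety'' sentence in your third paragraph should simply be deleted.
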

        \begin{proof}
            \((1)\) Let \(S \subseteq \rho\) be a finite subset. We claim that:
            \[\tvlgrpec \entails (\exists a,b : \bb{G})\left(\bigland_{\phi(v,w) \in S} \phi(a,b)\right).\]

            Hence, let \(\cal{W} = (\cal{H},\cal{K},Q) \models \tvlgrpecskolem\), and \(N = \max \{n \in \bb{N} \ | \ \phi_n \in S\}\). Let \(b \in \cal{H}\) with \(0 < b\), and \(a = \frac{b}{N + 1}\). We see that:
            \[0 < a,b \text{ and } Na < b.\]

            Then, as for every \(k \leq N\), \(ka \leq Na\), we have that \((a,b)\) realise \(S\). In particular:
            \[\tvlgrpec \entails (\exists a,b : \bb{G})\left(\bigland_{\phi(v,w) \in S} \phi(a,b)\right)\]

            as required. Now, as every partial type can be extended to a complete type, we have that there is some \(\hat{\rho} \in \typespace{\tvlgrpec}{(v,w)}\) such that \(\rho \subseteq \hat{\rho}\), as required. \\

            \((2)\) Suppose \(f,g \in \cal{G}\) realise \(\hat{\rho}\). In particular, \(0 < f < g\), and so let \(n \in \bb{N}\) be such that \(0 < f(n) < g(n)\). Now, for \(k \in \bb{N}\), we have that \(kf(n) < g(n)\) (as \(kf < g\) for all \(k \in \bb{N}\)). Hence, as \(\bb{Q}\) is Archimedean, \(f(n) = 0\) \Lightning. Thus, \(\cal{V}\) omits \(\hat{\rho}\), as required.
        \end{proof}

        \begin{corollary}
            \(\tvlgrpec\) isn't \(\omega\)-categorical.
        \end{corollary}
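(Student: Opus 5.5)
The corollary should follow from Lemma \ref{2PeriodicFuncsOmitAType} via Ryll-Nardzewski. A complete theory in a countable language with infinite models is \(\omega\)-categorical if and only if, for each \(n\), the type space \(\typespace{\tvlgrpec}{n}\) is finite. Equivalently, every type over the empty set is isolated, and then every countable model realises every type. The plan is to produce a countable model of \(\tvlgrpec\) that omits a complete type of the theory.

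First I would record the hypotheses needed to apply Ryll-Nardzewski. The language \(\lvlgrp\) is finite, hence countable. \(\tvlgrpec\) is complete by Theorem \ref{ECDVAlGrpIsComp}. Its models are infinite, since \(\cal{G} \neq \{0\}\) by Lemma \ref{ACValGrpProps} and \(\cal{G}\) is divisible and torsion-free. The multi-sorted setting causes no difficulty, since the paper already notes that the standard results transfer.

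Next I would take \(\cal{V}\) to be the structure of \(2^n\)-periodic functions \(\bb{N} \to \bb{Q}\) from Example \ref{2nPeriodFuncEx}. It is a model of \(\tvlgrpec\), and it is countable. Indeed, a \(2^n\)-periodic function is determined by finitely many rationals, so \(\cal{G}\) is a countable union of countable sets. The lattice sort \(\bigcup_n \cal{P}(I_n)\) is likewise a countable union of finite sets.

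Lemma \ref{2PeriodicFuncsOmitAType}(1) gives a complete type \(\hat{\rho} \in \typespace{\tvlgrpec}{(v,w)}\) in two group variables, and part (2) says \(\cal{V}\) omits it. If \(\tvlgrpec\) were \(\omega\)-categorical, Ryll-Nardzewski would make every type in finitely many variables isolated. By completeness, an isolated type is realised in every model of \(\tvlgrpec\), in particular in \(\cal{V}\), which is a contradiction.

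The only step needing care is the appeal to Ryll-Nardzewski in the multi-sorted setting: one must check that the relevant type space includes types in group-sort variables. This is immediate, because the theorem applies to finite tuples of variables of any sorts. An alternative argument avoids Ryll-Nardzewski altogether. A countable saturated model, or just a countable model realising \(\hat{\rho}\) obtained by downward Löwenheim–Skolem applied to a model realising it, would be a countable model not isomorphic to \(\cal{V}\), which directly contradicts \(\omega\)-categoricity. I would present this second version, since it only needs compactness, Löwenheim–Skolem, and the fact that isomorphisms preserve realisation of types.
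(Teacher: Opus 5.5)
Your first argument is exactly the paper's proof. Lemma \ref{2PeriodicFuncsOmitAType} supplies the countable model \(\cal{V}\) of \(2^n\)-periodic functions omitting the complete type \(\hat{\rho}\), and Ryll-Nardzewski, together with the completeness of \(\tvlgrpec\) (Theorem \ref{ECDVAlGrpIsComp}), rules out \(\omega\)-categoricity. Your explicit checks of countability and of infinitude of models fill in what the paper takes for granted.

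The version you say you would actually present is a correct and genuinely more elementary route. Its gain is that it needs neither Ryll-Nardzewski nor the completeness of \(\tvlgrpec\), so the corollary no longer depends on Theorem \ref{ECDVAlGrpIsComp}. It does not even need the complete type \(\hat{\rho}\). The proof of part \((2)\) of the lemma uses only formulas in \(\rho\), so \(\cal{V}\) already omits the partial type \(\rho\). Meanwhile, the finite-satisfiability argument of part \((1)\) plus compactness gives a model of \(\tvlgrpec\) realising \(\rho\). A countable elementary substructure containing a realising pair is then a countable model not isomorphic to \(\cal{V}\).

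The paper's route is a one-line citation once completeness is in hand. Yours exhibits two non-isomorphic countable models directly, using only compactness and L\"owenheim--Skolem.
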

        \begin{proof}
            By Lemma \ref{2PeriodicFuncsOmitAType}, there is a countable model of \(\tvlgrpec\) which omits a type. However, by the Ryll-Nardzewski theorem \cite[Thm \(7.3.1\)]{Hodges}, this cannot happen if \(\tvlgrpec\) is \(\omega\)-categorical.
        \end{proof}

        This suggests that the automorphism group of a countable model of \(\tvlgrpecskolem\) will be a difficult object to handle. Fortunately, the following result shows we can reduce automorphisms of an existentially closed densely valued \(\ell\)-group to just automorphisms of the underlying \(\ell\)-group.

        \begin{lemma}
            Let \(\cal{V} = (\cal{G},\cal{L},P)\) be an existentially closed densely valued \(\ell\)-group.Then:
            \begin{itemize}
                \item[\((1)\)] For each \(\phi : \cal{G} \to \cal{G}\) an \(\llgrp\)-automorphism, we have an \(\lvlgrp\)-automorphism \((\phi,\psi_{\phi}) : \cal{V} \to \cal{V}\) given by:
                \[\psi_{\phi} : \cal{L} \to \cal{L} : x \mapsto \begin{cases}
                    P(\phi(-a)) & a \in \cal{G}^+ \text{ such that } P(-a) = x\\
                    \bot & x = \bot
                \end{cases}\]

                \item[\((2)\)] The map \(\alpha : \norm{Aut}_{\llgrp}(\cal{G}) \to \norm{Aut}_{\lvlgrp}(\cal{V})\) given by \(\alpha(\phi) = (\phi,\psi_{\phi})\) is a group isomorphism.
            \end{itemize}
        \end{lemma}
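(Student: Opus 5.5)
The key tool is Lemma \ref{PAltDefInECValGrps}. For \(a,b \in \cal{G}^+\), it identifies \(P(-a) = P(-b)\) with the \(\llgrp\)-definable condition \(\pol{a} = \pol{b}\), where \(\pol{a} = \{c \in \cal{G} \ | \ a \meet c = 0\}\).

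\textbf{Well-definedness of \(\psi_\phi\).} Every \(x \neq \bot\) is of the form \(P(-a)\) for some \(a \in \cal{G}^+\). Indeed, by essential totality (or surjectivity, Lemma \ref{ACValGrpProps}(1)) we have \(x = P(g)\) for some \(g\). Then \(P(g) = P(g \meet 0) = P(-a)\) with \(a = -(g\meet 0) = (-g)\join 0 \geq 0\).

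Now suppose \(P(-a) = P(-b)\) with \(a,b \in \cal{G}^+\). Then \(\pol{a} = \pol{b}\). Since \(\phi\) is an \(\ell\)-group automorphism, \(\phi(\pol{a}) = \pol{\phi(a)}\), so \(\pol{\phi(a)} = \pol{\phi(b)}\). Because \(\phi(a), \phi(b) \geq 0\), the lemma gives \(P(-\phi(a)) = P(-\phi(b))\). Hence \(\psi_\phi\) is well defined.

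One edge case must be checked: \(P(-a) = \bot\) can occur for \(a \in \cal{G}^+\) (for instance when \(a\) is a weak order unit), and the two clauses must agree there. By Lemma \ref{ACValGrpProps}(5), \(P(-a) = \bot\) if and only if \(a\) is a weak order unit, i.e.\ \(\pol{a} = \{0\}\). This property is preserved by \(\phi\), so \(P(-\phi(a)) = \bot\) as well, and the clauses are consistent.

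\textbf{\((\phi,\psi_\phi)\) is an automorphism.} Compatibility holds because for any \(g\), \(P(g) = P(-a)\) with \(a = (-g)\join 0\), so
\[\psi_\phi(P(g)) = P(-\phi(a)) = P(\phi(g)\meet 0) = P(\phi(g)).\]
From this, \(\psi_\phi\) preserves meets and joins: writing \(x = P(g)\) and \(y = P(h)\), we get \(x \meeta y = P(g\meet h)\) and \(\phi(g \meet h) = \phi(g)\meet\phi(h)\). It also preserves the bounds, since \(\psi_\phi(\bot) = \bot\) and \(\psi_\phi(\top) = \psi_\phi(P(0)) = P(0) = \top\).

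Functoriality is immediate from the formula: \(\psi_{\phi\circ\phi'} = \psi_\phi \circ \psi_{\phi'}\) and \(\psi_{\mathrm{id}} = \mathrm{id}\). Hence \(\psi_{\phi^{-1}}\) is an inverse for \(\psi_\phi\), and \((\phi,\psi_\phi)\) is an \(\lvlgrp\)-automorphism. This proves (1).

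\textbf{\(\alpha\) is an isomorphism.} By the functoriality above, \(\alpha\) is a group homomorphism, and it is clearly injective, since \(\phi\) is its first coordinate. For surjectivity, let \((\phi,\psi)\) be any automorphism of \(\cal{V}\). Then \(\phi\) is an \(\llgrp\)-automorphism, and compatibility forces \(\psi(P(g)) = P(\phi(g))\) for every \(g\). Since \(P\) is surjective (Lemma \ref{ACValGrpProps}(1)), \(\psi\) is determined by \(\phi\), so \(\psi = \psi_\phi\).

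The main obstacle is the well-definedness step. It rests entirely on Lemma \ref{PAltDefInECValGrps}, which translates equality of \(P\)-values into a pure \(\ell\)-group condition that automorphisms respect, together with the verification that \(\phi(\pol{a}) = \pol{\phi(a)}\). Once that is in place, everything else is routine bookkeeping.
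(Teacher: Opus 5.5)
Your proof is correct and follows essentially the same route as the paper. Well-definedness rests on Lemma \ref{PAltDefInECValGrps} together with the fact that \(\ell\)-automorphisms preserve polars, and everything else is the compatibility identity \(\psi_{\phi}(P(g)) = P(\phi(g))\) plus bookkeeping; the differences are minor: you get bijectivity of \(\psi_{\phi}\) from \(\psi_{\phi^{-1}}\) (the paper checks injectivity and surjectivity directly), and you explicitly check via Lemma \ref{ACValGrpProps}(5) that the two clauses of the definition agree at \(\bot\), which the paper leaves implicit.
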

        \begin{proof}
            \((1)\) First, we check that \(\psi_{\phi}\) is well-defined. Hence, let \(a,b \in \cal{G}^+\), and notice that by Lemma \ref{PAltDefInECValGrps}:
            \begin{align*}
                P(-a) = P(-b) \iff & (\forall c \in \cal{G})(a \meet c = 0 \iff b \meet c = 0) \\
                \iff & (\forall c \in \cal{H})(\phi(a) \meet c = 0 \iff \phi(b) \meet c = 0) \\
                \iff & P(\phi(-a)) = P(\phi(-b)) \\
                \iff & \psi_{\phi}(P(-a)) = \psi_{\phi}(P(-b)).
            \end{align*}

            Hence, \(\psi_{\phi}\) is well-defined and injective. Next, we show that \(\psi_{\phi}\) is a surjective bounded lattice morphism. Thus, consider:
            \begin{itemize}
                \item For \(k \in \cal{L}\), we have that there is some \(a \in \cal{G}^+\) such that \(P(-a) = k\). As \(\phi\) is an automorphism, let \(b \in \cal{G}^-\) such that \(\phi(a) = b\). Then, we see that \(k = P(-a) = P(\phi(-b)) = \psi_{\phi}(P(-b))\), and so \(\psi_{\phi}\) is surjective.

                \item Let \(a,b \in \cal{G}^+\), and consider that:
                \begin{align*}
                    \psi_{\phi}(P(-a) \joina P(-b)) = & \psi_{\phi}(P((-a) \join (-b))) \\
                    = & P(\phi((-a) \join (-b))) \\
                    = & P(\phi(-a)) \joina P(\phi(-b)) \\
                    = & \psi_{\phi}(P(-a)) \joina \psi_{\phi}(P(-b))
                \end{align*}

                and similarly for \(\meeta\). Hence, \(\psi_{\phi}\) is a lattice morphism.
            \end{itemize}

            Finally, by construction, we have that for \(a \in \cal{G}^+\) \(P(\phi(-a)) = \psi_{\phi}(P(-a))\), and so \((\phi,\psi)\) is an \(\lvlgrp\)-automorphism. \\

            \((2)\) By \((1)\), \(\alpha\) is a well-defined map. Further, the map \(\beta : \norm{Aut}_{\lvlgrp}(\cal{V}) \to \norm{Aut}_{\llgrp}(\cal{G})\) given by \(\beta((\phi,\psi)) = \phi\) is clearly a group morphism. Thus, it is sufficient to show that \(\alpha\) and \(\beta\) are inverses. Hence, consider:
            \begin{itemize}
                \item Let \(\phi \in \norm{Aut}_{\llgrp}(\cal{G})\). It is immediate that\((\beta \circ \alpha)(\phi) = \beta(\phi,\psi_{\phi}) = \phi\).

                \item Let \((\phi,\psi) \in \norm{Aut}_{\lvlgrpskolem}(\cal{V})\). We see that \((\alpha \circ \beta)(\phi,\psi) = \alpha(\phi) = (\phi,\psi_{\phi})\). Thus, we only need to show that \(\psi = \psi_{\phi}\). However, by definition, we see that for \(a \in \cal{G}^+\), \(\psi_{\phi}(P(-a)) = P(\phi(-a)) = \psi(P(-a))\), and so \(\psi_{\phi} = \psi\), as required.
            \end{itemize}
        \end{proof}